\numberwithin{counter}{subsection}
\newaliascnt{lem}{theorem}
\newaliascnt{def}{theorem}
\newaliascnt{prop}{theorem}
\newaliascnt{coro}{theorem}
\newaliascnt{rmk}{theorem}
\newtheorem*{property*}{Property:}
\newtheorem*{conjecture}{Conjecture:}
\newtheorem{question}[Theorem]{Question}
\newcommand{\D}{\partial}
\newcommand{\eps}{\varepsilon}
\newcommand{\R}{\mathbb{R}}
\newcommand{\la}{\lambda}
\begin{document}
\title{Symmetry properties of stable solutions of semilinear elliptic equations in unbounded domains}


\author{Samuel Nordmann\thanks{samuel.nordmann@ehess.fr ; Ecole des Hautes Etudes en Sciences Sociales, PSL University, CNRS, Centre d'Analyse et Mathematiques Sociales. 54 boulevard Raspail, 75006 Paris. ORCID ID: 0000-0002-3562-6893.
}}



\maketitle
\begin{abstract}
We consider stable solutions of a semilinear elliptic equation with homogeneous Neumann boundary conditions.
A classical result of Casten, Holland and Matano states that all stable solutions are constant in convex bounded domains. In this paper, we examine whether this result extends to \emph{unbounded} convex domains. We give a positive answer for {stable non-degenerate} solutions, and for stable solutions if the domain $\Omega$ further satisfies $\Omega\cap\{\vert x\vert\leq R\}= O(R^2)$, when $R\to+\infty$. If the domain is a straight cylinder, an additional natural assumption is needed. These results can be seen as an extension to more general domains of some results on De Giorgi's conjecture.

As an application, we establish asymptotic symmetries for stable solutions when the domain satisfies a geometric property asymptotically. 

\end{abstract}
\paragraph{Keywords:} Semilinear elliptic equations ; Stability ; Symmetry ; Neumann boundary conditions ; De Giorgi's conjecture ; Liouville property ; Convex domains ; Unbounded domains ; Generalized principal eigenvalue\\

\noindent {\bf AMS Class. No:} 35B35, 35B06, 35J15, 35J61, 35B53.
\paragraph{Acknowledgement.}
The author is deeply thankful to Professor Henri Berestycki for proposing the subject and for all the very instructive discussions.\\
The author also thanks the anonymous referee for his comments which helped to improve the paper.\\
The research leading to these results has received funding from the European
Research Council under the European Union's Seventh Framework Programme
(FP/2007-2013) / ERC Grant Agreement n.321186 - ReaDi -ReactionDiffusion
Equations, Propagation and Modelling held by Henri Berestycki.

\tableofcontents

\section{Introduction}

\subsection{Presentation of the problem}
We study some symmetry properties of stable solutions of semilinear elliptic equations with Neumann boundary conditions. We consider the following problem:
\begin{equation}\label{ANG_Intro_EquationSemilineaire}
    \left\{
    \begin{aligned}
        &-\Delta u(x)=f(u(x)) &&\forall x\in\Omega,\\ 
        &\D_\nu u(x)=0 &&\forall x\in\D\Omega,\\
        &u\in C^{3}\left(\overline{\Omega}\right)\quad ;\quad \nabla u \in L^\infty(\Omega),
    \end{aligned}
    \right.
\end{equation}
where $\Omega\subset\R^n$ is a (possibly unbounded) smooth domain, $\D_\nu$ is the outward normal derivative, and $f$ is $C^{1}$.

A solution is said to be \emph{stable} if the second variation of energy at $u$ is nonnegative. 
\begin{definition}\label{ANG_Intro_DefStabilite}
Let $u$ be a solution of~\eqref{ANG_Intro_EquationSemilineaire} and set
\begin{equation}\label{DefLambda}
\la_1:=\inf\limits_{\substack{\psi\in C^1_0(\overline{\Omega})\\ \Vert \psi\Vert_{{L}^2}=1}}\int_\Omega\vert\nabla \psi\vert^2-f'(u)\psi^2=: \inf\limits_{\substack{\psi\in C^1_0(\overline{\Omega})\\ \Vert \psi\Vert_{{L}^2}=1}} \mathcal{F}(\psi),
\end{equation} 
where $C^1_0(\overline{\Omega})$ is the space of continuously differentiable functions with compact support in $\overline{\Omega}$ (which do not necessarily vanish on $\D\Omega$).

The solution $u$ is said to be stable if $\lambda_1\geq0$, and stable non-degenerate if $\lambda_1>0$.
\end{definition}
A stable non-degenerate solution is then a (non-degenerate) minimum of the energy. On the other hand, any degenerate critical point of the energy is \emph{stable} according to our definition. See Appendix~\ref{Annexe:Stability} for more detailed on the link between this definition and the classical dynamical definition of stability.

Note that if $z\in\R$ is a stable root of $f$, i.e., $f(z)=0$ and $f'(z)\leq0$, it is a (trivial) stable solution. 
We are interested in the existence/non-existence and symmetries of \emph{non-trivial} stable solutions, called \emph{patterns} in the sequel.
\begin{definition}
We call  \emph{pattern} (resp. non-degenerate pattern) any non-constant stable (resp. stable non-degenerate) solution.
\end{definition}
In two independent papers, Casten, Holland~\cite{Casten1978a}, and Matano~\cite{Matano1979} proved the following result. 
\begin{theorem}[\cite{Casten1978a,Matano1979}]\label{ANG_th:Intro_CHM}
If the domain $\Omega$ is bounded and convex, there exists no pattern to~\eqref{ANG_Intro_EquationSemilineaire}.
\end{theorem}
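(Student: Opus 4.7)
My plan is to test the stability functional $\mathcal{F}(\psi)\geq 0$ against each partial derivative $\psi=\partial_i u$ and to compare the resulting sum with the Bochner identity integrated over $\Omega$. Convexity of $\Omega$ will enter only through the sign of the boundary term produced by the integration by parts, so the theorem is really a rigidity statement forced by the interplay between the linearized equation $-\Delta (\partial_i u)=f'(u)\partial_i u$ and the geometry of $\partial\Omega$.

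The starting point is the Bochner identity
\[
\tfrac12\Delta|\nabla u|^2 \;=\; |D^2u|^2 - f'(u)|\nabla u|^2 \quad \text{in } \Omega,
\]
together with the boundary identity, obtained from $\partial_\nu u=0$ and a short tangential computation,
\[
\tfrac12\,\partial_\nu |\nabla u|^2 \;=\; -\mathrm{II}(\nabla u,\nabla u) \quad \text{on } \partial\Omega,
\]
where $\mathrm{II}$ is the second fundamental form of $\partial\Omega$. Integrating the first identity over $\Omega$ and substituting the second yields
\[
\int_\Omega\bigl(|D^2u|^2 - f'(u)|\nabla u|^2\bigr) \;=\; -\int_{\partial\Omega}\mathrm{II}(\nabla u,\nabla u)\;\leq\;0,
\]
with the last inequality coming from the convexity of $\Omega$. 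Since $\sum_i|\nabla\partial_i u|^2 = |D^2u|^2$ and $\sum_i(\partial_i u)^2 = |\nabla u|^2$, the left-hand side equals exactly $\sum_{i=1}^n\mathcal{F}(\partial_i u)$, and each term is nonnegative by stability. Every inequality must therefore be an equality: $\mathcal{F}(\partial_i u)=0$ for every $i$, and $\mathrm{II}(\nabla u,\nabla u)\equiv 0$ on $\partial\Omega$.

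It remains to upgrade this equality information to $\nabla u\equiv 0$. The vanishing of $\mathcal{F}(\partial_i u)$ says that every nontrivial $\partial_i u$ is a minimizer of the Rayleigh quotient defining $\lambda_1$, hence a Neumann eigenfunction associated with eigenvalue $\lambda_1=0$, in particular of constant sign. If $\Omega$ is \emph{strictly} convex, $\mathrm{II}$ is positive definite on tangent vectors, so the identity $\mathrm{II}(\nabla u,\nabla u)=0$ forces $\nabla u\equiv 0$ on $\partial\Omega$; then each $\partial_i u$ belongs to $H^1_0(\Omega)$ and lies in the kernel of $-\Delta - f'(u)$, which by the strict gap $\lambda_1^{\mathrm{Dir}}(\Omega,f'(u))>\lambda_1^{\mathrm{Neu}}(\Omega,f'(u))=0$ must be trivial, so $\nabla u\equiv 0$ and $u$ is constant. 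The main obstacle, and where I expect the argument to require extra care, is the merely convex case: on the flat portions of $\partial\Omega$ the identity $\mathrm{II}(\nabla u,\nabla u)=0$ carries no pointwise information on $\nabla u$, so one must combine the constant-sign monotonicity of the eigenfunctions $\partial_i u$ with a sliding/reflection argument based on the convexity of $\Omega$ (in the spirit of Matano) to propagate flatness from $\partial\Omega$ into the interior and conclude $\nabla u\equiv 0$ throughout $\Omega$.
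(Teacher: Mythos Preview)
Your first three paragraphs are essentially the paper's Steps~1--3: the Bochner identity is just a compact way of writing $\sum_i\mathcal F(\partial_i u)=\tfrac12\int_{\partial\Omega}\partial_\nu|\nabla u|^2$, and the boundary identity $\tfrac12\partial_\nu|\nabla u|^2=-\mathrm{II}(\nabla u,\nabla u)$ is exactly the content of the paper's Lemma~\ref{LemmeIntermediaireVariations}. So far the approaches coincide.

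The difference, and the gap, is in the final step. You split into strictly convex versus merely convex cases; the first you handle via the Dirichlet--Neumann eigenvalue gap, and the second you leave open with a vague appeal to ``a sliding/reflection argument in the spirit of Matano''. This case distinction is unnecessary, and the proposed sliding argument is not what closes the proof. The paper's Step~4 is much simpler and works uniformly: once you know $\partial_i u=c_i\varphi$ with $\varphi>0$ on $\overline\Omega$, just observe that since $\Omega$ is bounded there is a point $x_0\in\partial\Omega$ where $x_i$ is maximal, at which $\nu(x_0)=e_i$, hence $\partial_i u(x_0)=\partial_\nu u(x_0)=0$. This forces $c_i=0$ for every $i$, and therefore $\nabla u\equiv0$. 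No strict convexity, no $H^1_0$ argument, no sliding is needed; the whole step is one line. Your strictly convex argument is correct but roundabout, and your plan for the general convex case is not a proof.
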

We insist on the fact that this conclusion is valid for any $f\in C^{1}$.

The main purpose of this paper is to examine whether the above theorem extends to \emph{unbounded} convex domains.
The classification of stable solutions in the particular case $\Omega=\R^n$ has already been widely investigated: this problem is intricate, and closely linked to De Giorgi's conjecture~\cite{Wei2018}.
However, it appears that the literature only deals, on the one hand, with bounded convex domains and, on the other hand, with the entire space $\R^n $. It does not deal, or only slightly, with general unbound convex domains.

\paragraph*{}
In this paper, we establish the non-existence of non-degenerate patterns in any convex unbounded domain, and the $1$-dimensional symmetry of (possibly degenerate) patterns if the domain $\Omega$ further satisfies $\limsup_{R\to+\infty}\frac{\vert \Omega\cap\{\vert x\vert\leq R\}\vert}{R^2}<+\infty$. In such domains, this latter result implies the non-existence of patterns if $\Omega$ is not a straight cylinder, or if $f$ satisfies an additional natural assumption. One can see those results as extending some advances on De Giorgi's conjecture to a more general class of domains

As an application, we establish asymptotic symmetries for patterns when the domain satisfies a geometric property asymptotically. In particular, if the domain is a cylinder (with a varying section) that tends to be convex at infinity, we prove that a pattern must converge to a constant. 

For the reader's convenience, we give a simple proof of \autoref{ANG_th:Intro_CHM} in section~\ref{Sec:FormalApproach}.
We also recall further classical symmetry results in section~\ref{Sec:SymmetryProperties}, namely that stable solutions inherit from the domain's invariance with respect to translation or rotations. The appendix proposes a discussion on the notion of generalized principal eigenvalue, the different definitions of stability, and the isolation of stable solutions. In a forthcoming paper~\cite{Nordmann2019b}, we will investigate to what extent one can relax the assumption that the domain is convex in \autoref{ANG_th:Intro_CHM}.

\subsection{Context, general remarks, and references}

Consider the energy 
\begin{equation}\label{Energy_eps}
\mathcal{E}_\eps (u)= \int_\Omega{\eps^2}\vert\nabla u\vert^2 + F(u),
\end{equation}
where $\Omega$ is bounded, $\eps>0$ is a parameter and $F$ is a two-well potential, say, $F(u):=\frac{1}{2}\left(1-u^2\right)^2$.
Any minimizer of $\mathcal{E}_\eps$ is a stable solution of the associated Euler-Lagrange equation:
\begin{equation}\label{ANG_Intro_EquationEps}
\left\{\begin{aligned}
&-\eps^2 \Delta u_\eps = f(u_\eps)&&\text{in }\Omega,\\
&\D_\nu u_\eps=0 &&\text{on }\D\Omega,
\end{aligned} \right.
\end{equation}
with Allen-Cahn's nonlinearity $f(u):=u-u^3$.

A series of seminal papers~\cite{Modica1979,Caffarelli1995,Caffarelli2006,Kohn1989} establishes that when $\eps\to0$, the level sets of patterns of \eqref{ANG_Intro_EquationEps} converge to minimal surfaces in $\Omega$.
From a rescaling $x\leftrightarrow \eps x$, i.e., zooming around the origin, equation~\eqref{ANG_Intro_EquationEps} at the limit $\eps\to0$ reduces to equation~\eqref{ANG_Intro_EquationSemilineaire} in $\R^n$. According to the above, the level sets of stable solutions in $\R^n$ should be minimal surfaces in $\R^n$.
These are known to be necessarily hyperplanes if and only if $n\leq7$~\cite{Simons1968,Bombieri1969}. Regarding a minimal surface which is also the graph of a function defined on $\R^{n-1}$, we gain one dimension: such a surface is necessarily a hyperplane if and only if $n\leq8$~\cite{DeGiorgi1965,Bombieri1969,Cabre,Jerison2004}.
This brought De Giorgi to state the following conjecture.
\begin{conjecture}[De Giorgi]
Let $u$ be a solution of $-\Delta u=u-u^3$ in $\R^n$, such that $\vert u\vert<1$ and $\D_{x_n}u>0$. The level sets of $u$  are hyperplanes, at least if $n\leq 8$.
\end{conjecture}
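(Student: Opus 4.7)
The plan is to follow the standard strategy that links monotone solutions to the geometry of their level sets: upgrade monotonicity to stability, combine the stability inequality with a Bochner-type identity to obtain a geometric Poincar\'e inequality, and close the argument with a cut-off whose feasibility hinges on a quadratic $L^2$ bound on the Dirichlet energy over balls.

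\textbf{Step 1 (monotonicity $\Rightarrow$ stability).} Differentiating the PDE in $x_n$ gives $-\Delta \varphi = f'(u)\varphi$ for $\varphi := \partial_{x_n} u > 0$. A standard Agmon--Allegretto--Piepenbrink argument, testing against $\psi^2/\varphi$ for compactly supported $\psi$, then shows that $\lambda_1 \ge 0$ in~\eqref{DefLambda}. Any monotone solution therefore lies in the stable framework of this paper, and De~Giorgi's conjecture reduces to a one-dimensional symmetry statement for stable solutions in $\R^n$.

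\textbf{Step 2 (geometric Poincar\'e / Sternberg--Zumbrun inequality).} Plugging $\psi = \eta |\nabla u|$ with $\eta \in C^{\infty}_c(\R^n)$ into the stability inequality, and exploiting the Bochner identity $\tfrac{1}{2}\Delta |\nabla u|^2 = |D^2 u|^2 + \nabla u \cdot \nabla(\Delta u)$ together with the PDE, one obtains
\begin{equation}
\int_{\{|\nabla u| > 0\}} \Bigl( |A|^2 + \bigl|\nabla_T \log |\nabla u|\bigr|^2 \Bigr) |\nabla u|^2 \eta^2 \;\le\; \int_{\R^n} |\nabla u|^2 |\nabla \eta|^2,
\end{equation}
where $|A|$ is the norm of the second fundamental form of the level set of $u$ through $x$ and $\nabla_T$ is the tangential gradient along it. This reformulation is the heart of the method.

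\textbf{Step 3 (cut-off and Liouville conclusion).} If one can establish the quadratic energy bound $\int_{B_R} |\nabla u|^2 \le C R^{n-1}$, then plugging the standard logarithmic cut-off $\eta_R$ (equal to $1$ on $B_{\sqrt R}$, vanishing outside $B_R$) into the inequality of Step~2 and letting $R \to \infty$ forces $|A| \equiv 0$ and $\nabla_T |\nabla u| \equiv 0$. Each level set is then a hyperplane and $|\nabla u|$ is constant on it, which is precisely the one-dimensional symmetry.

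\textbf{Main obstacle.} The real difficulty is the quadratic energy bound of Step~3. It is routine for $n = 2, 3$ (Ghoussoub--Gui, Ambrosio--Cabr\'e), but for $4 \le n \le 8$ it genuinely requires Savin's deep theorem on the minimality of $\{u = 0\}$, together with the additional hypothesis $\lim_{x_n \to \pm\infty} u = \pm 1$. The restriction $n \le 8$ is sharp: the Bombieri--De~Giorgi--Giusti non-planar entire minimal graph in $\R^9$ underlies the del~Pino--Kowalczyk--Wei counterexample in $n = 9$. Securing the quadratic $L^2$ gradient bound for $4 \le n \le 8$ without an auxiliary limit assumption remains the open core of the conjecture, and this is where I would expect any proposal to stall.
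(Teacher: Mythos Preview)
The statement you are attempting to prove is presented in the paper as a \emph{conjecture}, not a theorem: the paper gives no proof of it, and indeed explicitly records that the case $4\le n\le 8$ without the additional limit assumption~\eqref{ANG_Intro_HypotheseSavin} remains open. There is therefore no ``paper's own proof'' to compare your proposal against.

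Your write-up is an accurate survey of the standard strategy (monotonicity $\Rightarrow$ stability, Sternberg--Zumbrun geometric Poincar\'e inequality, logarithmic cut-off), and you correctly identify the genuine obstruction: the quadratic energy bound $\int_{B_R}|\nabla u|^2\le CR^{n-1}$ is unavailable for $4\le n\le 8$ without Savin's minimality result, which in turn requires the extra hypothesis $\lim_{x_n\to\pm\infty}u=\pm1$. Since you yourself flag this as ``the open core of the conjecture'' and say ``this is where I would expect any proposal to stall,'' what you have written is not a proof but an honest outline of what is known and where the gap lies. That is entirely consistent with the paper's own discussion of the conjecture's status, but it should not be labeled a proof proposal for the full statement.
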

The fact that the level sets of $u$ are hyperplanes means that $u$ is \emph{flat}, {i.e.}, $u$ is $1$-d symmetric. It should be noted that the assumption $\D_{x_n} u>0$ implies that every level set of $u$ is the graph of a function defined on $\R^{n-1}$, which explains why the conjecture is stated for $n\leq 8$ and not $n\leq 7$.

A vast litterature is devoted to De Giorgi's conjecture (see~\cite{Wei2018,Farinaa} for a state of the art). The conjecture is proved by Ghoussoub and Gui~\cite{Ghoussoub1998} in dimension $n\leq 2$, by Ambrosio and Cabré~\cite{Ambrosio2000} in dimension $n=3$, and by Savin~\cite{Savin2003,Savin2009} in dimension $4\leq n\leq 8$ under the additional assumption
\begin{equation}\label{ANG_Intro_HypotheseSavin}
\lim\limits_{x_n\to\pm\infty} u(x'',x_n)=\pm1.
\end{equation}
A counter example is provided by del Pino, Kowalczyk and Wei~\cite{DelPino2008} in dimension $n\geq 9$ (this counterexample satisfies~\eqref{ANG_Intro_HypotheseSavin}). The conjecture is still open for dimensions $4\leq n\leq 8$ without the additional assumption~\eqref{ANG_Intro_HypotheseSavin}. Note that the proofs are, in general, valid for a rather large class of nonlinearities, and not only for Allen-Cahn's nonlinearity~\cite{Cabre,Alberti2001,Savin2009}.
Analogous results have also been obtained for non-compact Riemannian manifolds without boundary~\cite{Farina2013a,Farina2013b}..

\paragraph*{}
The assumption $\D_{x_n}u>0$ in De Giorgi's conjecture implies in particular that $u$ is stable (it is a pattern). Indeed, setting $v:=\D_{x_n}u$, and differentiating~\eqref{ANG_Intro_EquationSemilineaire}, we get $-\Delta v-f'(u)v=0$ in $\R^n$. Multiplying this equation by $\frac{\psi^2}{v}$ (for a test function $\psi$ with compact support), integrating on $\R^n$, and using the divergence theorem, we obtain
\begin{align*}
0&=\int_{\R^n}\nabla v\cdot\nabla\frac{ \psi^2}{ v}-f'(u) \psi^2\\
&=\int_{\R^n}2\frac{ \psi}{ v}\nabla v\cdot\nabla \psi-\frac{ \psi^2}{ v^2}\left\vert\nabla v\right\vert^2-f'(u) \psi^2\\
&\leq \int_{\R^n} \left\vert\nabla \psi\right\vert^2-f'(u) \psi^2,
\end{align*}
where we use Young's inequality $2ab \leq a^2 + b^2$ in the last step. We deduce $\lambda_1\geq0$, {i.e.}, $u$ is a pattern.

It is therefore natural to consider a variant of De Giorgi's conjecture, replacing the assumption $\D_{x_n}u>0$ with the assumption that $u$ is a pattern.
\begin{conjecture}[De Giorgi's variant]
Let $u$ be a pattern of $-\Delta u=u-u^3$ in $\R^n$, such that $\vert u\vert<1$. The level sets of $u$ are hyperplanes, at least if $n\leq 7$.
\end{conjecture}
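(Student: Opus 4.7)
The plan is to follow the classical stability-to-geometry route pioneered by Sternberg--Zumbrun and Farina--Sciunzi--Valdinoci, combined with a blow-down analysis that reduces the problem to the classification of stable minimal hypercones of $\R^n$. The starting point is the stability inequality $\int \vert \nabla \psi \vert^2 \geq \int f'(u) \psi^2$ for all $\psi\in C^\infty_c(\R^n)$. Differentiating the equation $-\Delta u = f(u)$ coordinatewise gives $-\Delta (\D_i u) = f'(u)\, \D_i u$; testing the linearized operator against $\psi^2/\D_i u$ as in the excerpt (but now on the open set $\{\vert\nabla u\vert >0\}$, with $v:=\vert\nabla u\vert$) produces the \emph{geometric Poincaré inequality}
\begin{equation*}
\int_{\{\vert\nabla u\vert >0\}} \bigl( \vert A\vert^2 \,v^2 + \vert \nabla_T v\vert^2 \bigr)\, \eta^2 \;\leq\; \int v^2\, \vert\nabla \eta \vert^2 \qquad \text{for every } \eta \in C^\infty_c(\R^n),
\end{equation*}
where $A$ is the second fundamental form of the level set $\{u=u(x)\}$ at $x$ and $\nabla_T$ is the tangential gradient along this level set.

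Next, I would insert a standard or logarithmic cutoff $\eta_R$ supported in $B_{2R}$ with $\eta_R \equiv 1$ on $B_R$. Interior elliptic estimates applied to the bounded solution $u$ give a global gradient bound $\vert \nabla u \vert \leq C$, so the right-hand side is at most $C R^{n-2}$. In dimension $n=2$ this is already enough: sending $R\to+\infty$ forces $\vert A\vert \equiv 0$ and $\nabla_T v\equiv 0$, hence each level set is a hyperplane and $u$ is $1$-d symmetric. In dimension $n=3$ one refines this using the Modica gradient bound $\vert \nabla u\vert^2 \leq 2F(u)$ for entire bounded solutions and a logarithmic cutoff à la Ambrosio--Cabré, which exploits a sharper decay of $v$ together with the fact that the level sets of $u$, being asymptotic to minimal surfaces, admit area estimates of sub-Euclidean growth.

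In dimensions $4 \leq n \leq 7$, which is precisely where the analog of Simons' theorem still allows only hyperplanes among stable minimal hypercones of $\R^n$, the main obstacle is \emph{qualitative}: the geometric Poincaré inequality above only expresses that the level sets of $u$ are \emph{stable} with respect to area in a weak PDE sense, and transferring this to an honest classification statement requires a $\Gamma$-convergence/blow-down argument. The plan would be to study the rescalings $u_\lambda(x) := u(\lambda x)$, to show that (up to subsequences) $\{u_\lambda = 0\}$ converges in the sense of currents to a stable minimal cone $\Sigma$, and to invoke the Simons--Bombieri--De Giorgi--Giusti classification to conclude that $\Sigma$ is a hyperplane. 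One then propagates the flatness back to $u$ itself, typically by combining the triviality of $\Sigma$ with a sliding/moving-plane or energy-monotonicity argument to deduce that the original level sets were hyperplanes all along.

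The hardest step, and the one responsible for the fact that this variant conjecture remains open for $4 \leq n \leq 7$, is the blow-down step: one needs sufficient compactness and regularity of the approximate level sets to pass to a genuine minimal cone and to rule out multiplicity or singularities that would obstruct the use of stable-hypersurface classification. This is exactly the analytical gap between the PDE stability identity and the geometric measure theory machinery, and any honest attack on the conjecture has to close it by producing such a blow-down compactness theorem under the sole assumption that $u$ is a bounded stable entire solution.
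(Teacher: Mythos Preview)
The statement you are attempting to prove is a \emph{conjecture}, and the paper does not prove it: it is explicitly presented as an open problem, with the paper noting that it is settled only for $n\leq 2$ (Ghoussoub--Gui), that a counterexample exists for $n=8$ (Pacard--Wei), and that the cases $3\leq n\leq 7$ remain open. So there is no ``paper's own proof'' to compare against; the honest baseline is that no proof is currently known beyond $n\leq 2$.

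Your outline is accurate for $n\leq 2$: the Sternberg--Zumbrun geometric Poincar\'e inequality together with the trivial bound $\int_{B_R}\vert\nabla u\vert^2\leq CR^n$ does yield flatness when $n=2$. However, your treatment of $n=3$ contains a genuine gap. The Ambrosio--Cabr\'e argument you invoke is for the \emph{monotone} De Giorgi conjecture, not for the stability variant. Their key step is the energy estimate $\int_{B_R}\vert\nabla u\vert^2\leq CR^2$, whose proof uses the monotonicity $\partial_{x_n}u>0$ in an essential way (via the limits $u(\cdot,x_n)\to\pm 1$ and a layer-cake/slicing computation). Under stability alone no such $R^2$ energy bound is known in $\R^3$; neither the Modica inequality nor a logarithmic cutoff closes this gap, and the ``sub-Euclidean area growth of level sets'' you appeal to is precisely what one would need to \emph{prove}, not assume. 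This is why the paper lists $n=3$ as open for the stability variant.

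For $4\leq n\leq 7$ you correctly identify the obstruction: one lacks a blow-down compactness theorem that would produce a stable minimal cone from a merely stable (rather than minimizing or monotone) solution, and without it Simons' classification cannot be invoked. Your proposal is therefore an honest description of a strategy and its known bottleneck, not a proof; it matches the current state of the art rather than advancing it.
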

Note that with the stability assumption (instead of $\D_{x_n} u>0$), we cannot guarantee that every level set of $u$ is the graph of a function defined on $\R^{n-1}$. This explains why this conjecture is stated for $n\leq7$ (unlike De Giorgi's conjecture, stated for $n\leq8$).

This variant is proved by Ghoussoub and Gui~\cite{Ghoussoub1998} for $n\leq2$, and a counterexample is known to exists in dimension $n=8$
\footnote{If the variant of De Giorgi conjecture were to be true in dimension $n=8$, the method of Ambrosio and Cabré~\cite{Ambrosio2000} would imply that De Giorgi conjecture holds in dimension $n=9$ which is impossible from the result of del Pino, Kowalczyk and Wei~\cite{DelPino2008}. See also~\cite{Pacard2013}. }, proving that the condition $n\leq7$ is optimal. The intermediate dimensions $3\leq n\leq 7$ are open.
Nevertheless, Dancer~\cite{Dancer2004} proves the non-existence of \emph{non-degenerate} patterns in $\R^n$ in any dimension.

\paragraph*{}
Let us also mention the results of~\cite{Cabrea} (refined in~\cite{Villegas2007}) which establish that radial patterns exist in $\R^n$ if and only if $n\geq11$.
Many further results (including regularity of weak stable solutions) are available under stronger assumptions on $f$, for example, if $f$ is increasing, convex, positive, see~\cite{Dupaigne2011,Cabreb} and references therein.

\subsection{Main results}

Our first result establishes the non-existence of non-degenerate patterns in any unbounded convex domain.
\begin{theorem}\label{thmStableNonDegenerate}
There exists no non-degenerate pattern to~\eqref{ANG_Intro_EquationSemilineaire} in convex (possibly unbounded) domains.
\end{theorem}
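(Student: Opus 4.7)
The plan is to show that $\phi := |\nabla u|$ vanishes identically in $\Omega$, which will force $u$ to be constant and thereby contradict its being a pattern. This extends the classical Casten-Holland-Matano strategy (which exploits convexity at the boundary) by means of a spectral-gap argument à la Dancer (which exploits $\la_1 > 0$ to handle the unboundedness of $\Omega$).

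The argument rests on two structural facts. First, the Bochner identity
\begin{equation*}
\tfrac{1}{2}\Delta|\nabla u|^2 \;=\; |D^2u|^2 - f'(u)|\nabla u|^2
\end{equation*}
combined with Kato's inequality $|\nabla|\nabla u||^2 \leq |D^2u|^2$ shows that $\phi$ is a non-negative weak subsolution of the linearized equation $-\Delta\phi - f'(u)\phi \leq 0$ in $\Omega$; I will make this rigorous through the regularization $\phi_\eps := \sqrt{|\nabla u|^2+\eps^2}$ and passage to the limit $\eps\to 0$. Second, tangentially differentiating the Neumann condition $\D_\nu u = 0$ on $\D\Omega$ yields $\D_\nu |\nabla u|^2 = -2\,\mathrm{II}(\nabla u,\nabla u)$, where $\mathrm{II}$ is the second fundamental form of $\D\Omega$; convexity of $\Omega$ gives $\mathrm{II}\geq 0$, so $\phi\,\D_\nu \phi \leq 0$ on $\D\Omega$.

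With these in hand, I will test the subsolution inequality against $\eta^2\phi$ for a smooth, compactly supported cutoff $\eta$, integrate by parts, and discard the non-positive boundary term to obtain
\begin{equation*}
\int_\Omega |\nabla(\eta\phi)|^2 - f'(u)(\eta\phi)^2 \;\leq\; \int_\Omega \phi^2\,|\nabla\eta|^2.
\end{equation*}
Since $\eta\phi\in H^1(\Omega)$, applying the non-degeneracy assumption $\la_1 > 0$ to $\eta\phi$ bounds the left-hand side from below by $\la_1\int_\Omega(\eta\phi)^2$, giving $\la_1\int_\Omega \eta^2\phi^2 \leq \int_\Omega \phi^2|\nabla\eta|^2$. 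Choosing a standard radial cutoff $\eta_R$ with $\eta_R\equiv 1$ on $B_R$, support in $B_{R+1}$, and $|\nabla\eta_R|\leq C$ for a constant $C$ independent of $R$, and setting $I(R) := \int_{B_R\cap\Omega}\phi^2$, this reduces to $I(R+1)\geq \bigl(1+\la_1/C^2\bigr)\,I(R)$.

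If $I(R_0) > 0$ for some $R_0$, iterating yields the exponential lower bound $I(R_0+k)\geq (1+\la_1/C^2)^k I(R_0)$. On the other hand, standard elliptic regularity (from $u\in L^\infty$, $f\in C^1$, and the smoothness of $\D\Omega$) gives $\nabla u\in L^\infty(\Omega)$, whence $I(R)\leq \|\nabla u\|_\infty^2\,|B_R|$ grows only polynomially in $R$, a contradiction. Hence $I\equiv 0$, so $\phi\equiv 0$ and $u$ is constant. The conceptual engine is the spectral gap $\la_1>0$, which manufactures an exponential lower bound on the $L^2$-mass of $\phi$ that beats the polynomial Euclidean volume bound, and this is exactly why no further hypothesis on $\Omega$ beyond convexity is required. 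I expect the main technical obstacles to be, first, handling the zero set $\{\nabla u = 0\}$ in the subsolution and boundary inequalities via the $\phi_\eps$ regularization, and second, securing the global $L^\infty$ bound on $\nabla u$ up to $\D\Omega$ in the unbounded setting, which implicitly relies on some uniform regularity of $\D\Omega$.
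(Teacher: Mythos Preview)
Your proposal is correct and follows the same overall architecture as the paper: a cutoff argument showing that $\la_1>0$ forces the $L^2$-mass of the gradient to grow faster than the polynomial volume bound allows, with convexity entering only through the boundary inequality $\D_\nu|\nabla u|^2\le 0$. Two packaging differences are worth noting. First, the paper works with the individual partials $v_i=\D_{x_i}u$, each of which satisfies the linearized equation exactly, and then exploits that the \emph{sum} $\sum_i\int_{\D\Omega}\chi_R^2\,\D_\nu v_i^2\le 0$; you instead bundle everything into $\phi=|\nabla u|$ via Bochner--Kato, which gives you the boundary sign condition directly on $\phi$ and avoids the bookkeeping of splitting and recombining the $v_i$. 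Second, the paper uses a scaled cutoff ($\chi_R\equiv 1$ on $B_R$, supported in $B_{2R}$, $|\nabla\chi_R|\le C/R$) and a doubling iteration $\mathcal C(2R)\ge \delta R^2\mathcal C(R)$, whereas your unit-width cutoff yields the cleaner arithmetic recursion $I(R+1)\ge(1+\la_1/C^2)I(R)$. Both choices work; yours is arguably more transparent here, while the paper's scaled cutoff is the one that later adapts to the degenerate case $\la_1=0$ under the growth hypothesis~\eqref{DomaineGeneral}.
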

This theorem has been proved by Dancer~\cite{Dancer2004} when $\Omega=\R^n$. In this case, the result can be formally justified as follows: the stability of a given pattern in $\R^n$ can be nothing but degenerate, since a continuum of patterns is given by translations of this same pattern. This argument is no longer valid if $\Omega$ is not invariant by translation.

Note that it is assumed in ~\eqref{ANG_Intro_EquationSemilineaire} that the solution has a bounded gradient. If we drop this assumption, then \autoref{thmStableNonDegenerate} do no longer apply, since $u(x):=e^x$ is a non-degenerate pattern of $-u''=-u$ in $\R$.
%

\paragraph*{}
The following result gives a classification of \emph{possibly degenerate} patterns when the domain is convex and satisfies a growth condition at infinity.

\begin{theorem}\label{thmdimension2}
Assume that $\Omega$ is convex and satisfies
\begin{equation}\label{DomaineGeneral}
\limsup_{R\to+\infty}\frac{\vert \Omega\cap\{\vert x\vert\leq R\}\vert}{R^2}<+\infty.
\end{equation}
Let $u$ be a stable solution of~\eqref{ANG_Intro_EquationSemilineaire}.
\begin{enumerate}
\item If $\Omega$ is not a straight cylinder ({i.e.}, $\Omega$ is not of the form $\R\times\omega$, $\omega \subset\R^{n-1}$), then $u$ is constant.
\item If $\Omega$ is a straight cylinder, then $u$ is either constant or a monotonic flat solution. If, in addition, $u$ is bounded then it connects two stable roots $(z^-,z^+)$ of $f$ such that ${\int_{z^-}^{z^+} f =0}$.
\end{enumerate}
\end{theorem}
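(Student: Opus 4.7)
The strategy is a Sternberg--Zumbrun / Ambrosio--Cabré type argument: derive a geometric Poincaré inequality that combines stability (in the interior) with convexity (on the boundary), then close the estimate with a logarithmic cutoff calibrated to the quadratic growth assumption~\eqref{DomaineGeneral}. Since $u\in L^\infty(\Omega)$ and $f\in C^1$, global Schauder estimates provide $\|\nabla u\|_{L^\infty(\Omega)}<\infty$, which is used throughout. I would test the stability functional~\eqref{DefLambda} against $\psi=|\nabla u|\eta$ for $\eta\in C_c^1(\overline\Omega)$. Combining $\mathcal F(|\nabla u|\eta)\geq0$ with the Bochner identity
\[
|\nabla u|\,\Delta|\nabla u|\;=\;|D^2u|^2-|\nabla|\nabla u||^2-f'(u)|\nabla u|^2
\]
(valid where $|\nabla u|>0$, obtained by differentiating~\eqref{ANG_Intro_EquationSemilineaire}) and integrating by parts, the cross-term and the $f'(u)$-term cancel exactly. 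The boundary contribution is $\int_{\partial\Omega}|\nabla u|\,\partial_\nu|\nabla u|\,\eta^2$; since $\nabla u\cdot\nu=0$ on $\partial\Omega$, tangentially differentiating this identity shows $|\nabla u|\,\partial_\nu|\nabla u|=-II(\nabla u,\nabla u)\leq0$ by convexity, where $II$ is the second fundamental form of $\partial\Omega$. One thus reaches the key inequality
\[
\int_\Omega\bigl(|D^2u|^2-|\nabla|\nabla u||^2\bigr)\eta^2 \;\leq\; \int_\Omega|\nabla u|^2|\nabla\eta|^2.
\]

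The next step is to insert the Ambrosio--Cabré logarithmic cutoff $\eta_R(x)=\min\{1,\max\{0,2-2\log|x|/\log R\}\}$, which equals $1$ on $B_{\sqrt R}$, vanishes outside $B_R$, and satisfies $|\nabla\eta_R|^2\leq C/(|x|^2(\log R)^2)$ on the annulus. A layer-cake / integration-by-parts computation in $r=|x|$, using~\eqref{DomaineGeneral}, yields $\int_{\Omega\cap(B_R\setminus B_{\sqrt R})}|x|^{-2}\,dx\leq C\log R$, so that the right-hand side above is $O(1/\log R)\to0$ as $R\to\infty$. Combined with the pointwise Sternberg--Zumbrun decomposition
\[
|D^2u|^2-|\nabla|\nabla u||^2 \;=\; |\nabla_T|\nabla u||^2+|\nabla u|^2|A|^2 \;\geq\;0
\]
on $\{|\nabla u|>0\}$ (where $\nabla_T$ is the gradient tangent to level sets and $A$ their second fundamental form), this forces $A\equiv0$ and $\nabla_T|\nabla u|\equiv0$: the level sets of $u$ are affine hyperplanes. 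Hence $u(x)=g(e\cdot x)$ for some unit vector $e\in\R^n$ and some $g\in C^2(\R)$.

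Finally, the Neumann condition becomes $g'(e\cdot x)\,(e\cdot\nu(x))=0$ on $\partial\Omega$. If $g$ is non-constant, then $e\cdot\nu\equiv0$ on $\partial\Omega$, which means $\Omega$ is invariant under translations along $e$ and is therefore a straight cylinder with axis $e$; this proves assertion~1. For assertion~2, write $\Omega=\R\times\omega$; then $g$ is a bounded $C^2$ solution of $-g''=f(g)$ on $\R$. Hamiltonian conservation $\tfrac12(g')^2+F(g)\equiv\mathrm{const}$ (with $F'=f$) together with boundedness forces $g$ to be either constant or strictly monotone with $g\to z^\pm$ and $g'\to0$ at $\pm\infty$, whence $F(z^-)=F(z^+)$, i.e.\ $\int_{z^-}^{z^+}f=0$. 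Linearizing $-h''=f'(z^\pm)h$ at the endpoints rules out $f'(z^\pm)>0$, since oscillatory solutions are incompatible with a monotone approach; hence $z^\pm$ are stable roots of $f$. The main obstacle is the delicate matching of the growth condition~\eqref{DomaineGeneral} with the logarithmic gain of the Moser cutoff: the exponent $2$ is exactly critical, and any faster growth of $|\Omega\cap B_R|$ would prevent the right-hand side from vanishing and thus break the rigidity step.
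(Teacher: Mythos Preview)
Your approach is correct in spirit and genuinely different from the paper's. The paper does not use the Sternberg--Zumbrun inequality at all: instead it proves a Liouville lemma (\autoref{schrodinger2}) showing that, under~\eqref{DomaineGeneral}, any bounded $v$ with $v(\Delta v+f'(u)v)\ge0$ and the convexity-induced boundary sign condition must equal $C\varphi$ for a positive principal eigenfunction $\varphi$. Applying this to each $\partial_{x_i}u$ yields $\nabla u=(C_1,\dots,C_n)\varphi$, hence one-dimensionality \emph{and} strict monotonicity of the profile $g$ simultaneously (since $g'=C_e\varphi$ has a sign). The paper's boundary step is then immediate: if $\Omega$ is not straight along $e$, a single point with $e\cdot\nu\neq0$ forces $g'=C_e\varphi$ to vanish there, so $C_e=0$. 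Your route trades the eigenfunction machinery for level-set geometry and a logarithmic cutoff; both exploit~\eqref{DomaineGeneral} at the same critical exponent.

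Two steps in your write-up are genuinely incomplete, though fixable. First, ``$g$ non-constant $\Rightarrow e\cdot\nu\equiv0$ on $\partial\Omega$'' is not a one-liner: from $g'(e\cdot x)(e\cdot\nu)=0$ and the fact that the zeros of $g'$ are isolated you only get $e\cdot\nu\in\{-1,0,+1\}$ on each boundary component. Components with $e\cdot\nu=\pm1$ are full hyperplanes, forcing $\Omega$ to be a slab or half-space (hence already a cylinder, perpendicular to $e$); components with $e\cdot\nu=0$ are ruled by lines in direction $e$, which by convexity precludes any cap and makes $\Omega$ a cylinder along $e$. You must spell this dichotomy out. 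Second, and more seriously, your claim ``Hamiltonian conservation together with boundedness forces $g$ to be either constant or strictly monotone'' is false as stated: bounded periodic orbits of $-g''=f(g)$ abound (e.g.\ Allen--Cahn). You must invoke \emph{stability} here. One clean fix: $v:=g'$ solves $-v''-f'(g)v=0$, and if $v$ changed sign it would be a Dirichlet eigenfunction with eigenvalue $0$ on a nodal interval, contradicting $\lambda_1\ge0$ (or: run your own SZ/Liouville argument in dimension one to get $g'=C\varphi$). This is precisely the information the paper gets for free from $\partial_e u=C_e\varphi$, and it is where the eigenfunction route is cleaner than the geometric one.
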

If $\Omega$ satisfies~\eqref{DomaineGeneral}, the above result ensures the non-existence of patterns if either $\Omega$ is not a straight cylinder, or $\int_{z_1}^{z_2} f\neq 0$ for all $z_1\neq z_2$ in $\left\{z\in\R:f(z)=0\text{ and } f'(z)\leq0\right\}$.

Note that, in the special case of a straight cylinder $\Omega=\R\times\omega$, the convexity of the domain is degenerate in one direction. Planar patterns may indeed exist in such domains. For example, the Allen-Cahn equation in $\R$, $-u''=u(1-u^2)$
admits the explicit solution 
$u : x\mapsto \tanh\frac{x}{\sqrt{2}}$ which is stable (degenerate). This solution can be seen as a traveling wave connecting the two stable roots $\pm1$, with speed $c=\int_{-1}^1 f=0$.

Condition~\eqref{DomaineGeneral} is usual in our context. It echoes with the celebrated Liouville type property of Berestycki, Caffarelli and Nirenberg (Theorem~1.7 in~\cite{Berestycki1997b}), of which a refined version is presented in \autoref{schrodinger2}. 
Under this assumption, we allow for instance convex domains that are subdomains of $\R^2$, or of the form $\R^i\times\omega$ with $\omega$ bounded and $i\in\{1,2\}$, or of the form
\begin{equation}
\Omega = \left\{(x_1,x')\in\R\times\R^{n-1}: x'\in\omega(x_1)\right\},
\end{equation}
where for all $x_1\in\R$, $\omega(x_1)\subset\R^{n-1}$ with $\limsup\limits_{\vert x_1\vert\to+\infty}\frac{\vert \omega(x_1)\vert}{\vert x_1\vert}<+\infty$.

According to De Giorgi's conjecture, it is reasonable to think that condition~\eqref{DomaineGeneral} can be substantially weakened, but not completely dropped since non-planar patterns may exist in $\R^8$. This problem has been widely investigated but many questions remain open, see section~\ref{Sec:schrodinger} for more details. Note, however, that straight cylinders (or the entire space) are domains for which convexity is degenerate. Up to the author's knowledge, not much is known about the classification of patterns in strictly convex domains.
\begin{question}
Can we relax assumption~\eqref{DomaineGeneral} under the assumption that the domain is strictly convex?
\end{question}

The conclusions, assumptions, and the proof of~\autoref{thmdimension2} are closely related to the work of Farina, Mari and Valdinoci~\cite{Farina2013b}. In Theorem~1 there, the authors suppose the existence of a non-constant stable solutions of $-\Delta u=f(u)$ with bounded gradient on a Riemannian manifold $M$ without boundary and with nonnegative Ricci curvature. Without further assumptions on $u$, it is also assumed either that the manifold is parabolic (i.e. it does not admit any Green function) or that the domain satisfies a growth condition similar to~\eqref{DomaineGeneral} but replacing $R^2$ by $R^2\log R$ (we prove that \autoref{thmdimension2} actually holds under this weaker condition if the domain can be written as a cylinder with varying cross section, see the end of Section~\ref{Sec:schrodinger}). Then, the authors in~\cite{Farina2013b} prove that the manifold can be written $M=\R\times N$ and that $u$ only depends on the first variable. These two conclusions correspond respectively to the two conclusions of \autoref{thmdimension2}.
Note that, since $M$ is assumed to have no boundary, the only instance of $M$ which is also a euclidian domain is the whole space $\R^n$.

\paragraph*{}

In section~\ref{Sec:SymmetryProperties}, we give further symmetry properties of patterns in unbounded domains, namely that patterns inherit the domain's invariance with respect to translations and planar rotations.

\paragraph{}
As an application of our results, we propose to establish an \emph{asymptotic} variant of~\autoref{ANG_th:Intro_CHM}. We consider a cylindrical domain which is \emph{asymptotically} convex (see Figure~\ref{ANG_IntG_fig:AsymptoticCylinder}), and show that any pattern \emph{converges} to a constant.
\begin{figure}[!h]
  \centering
    \centering
    \includegraphics[width=0.4\textwidth]{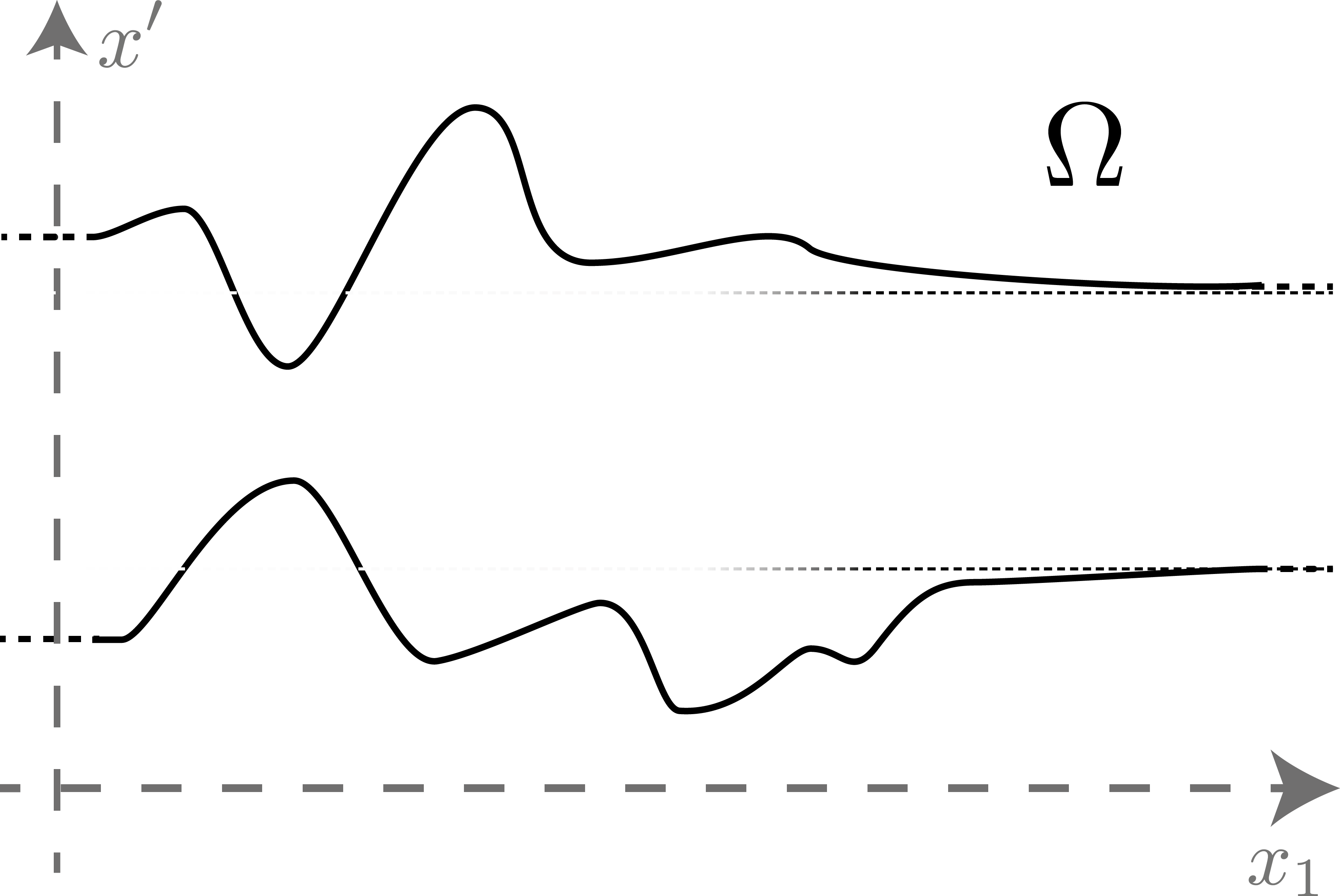}
    \hfill
    \includegraphics[width=0.4\textwidth]{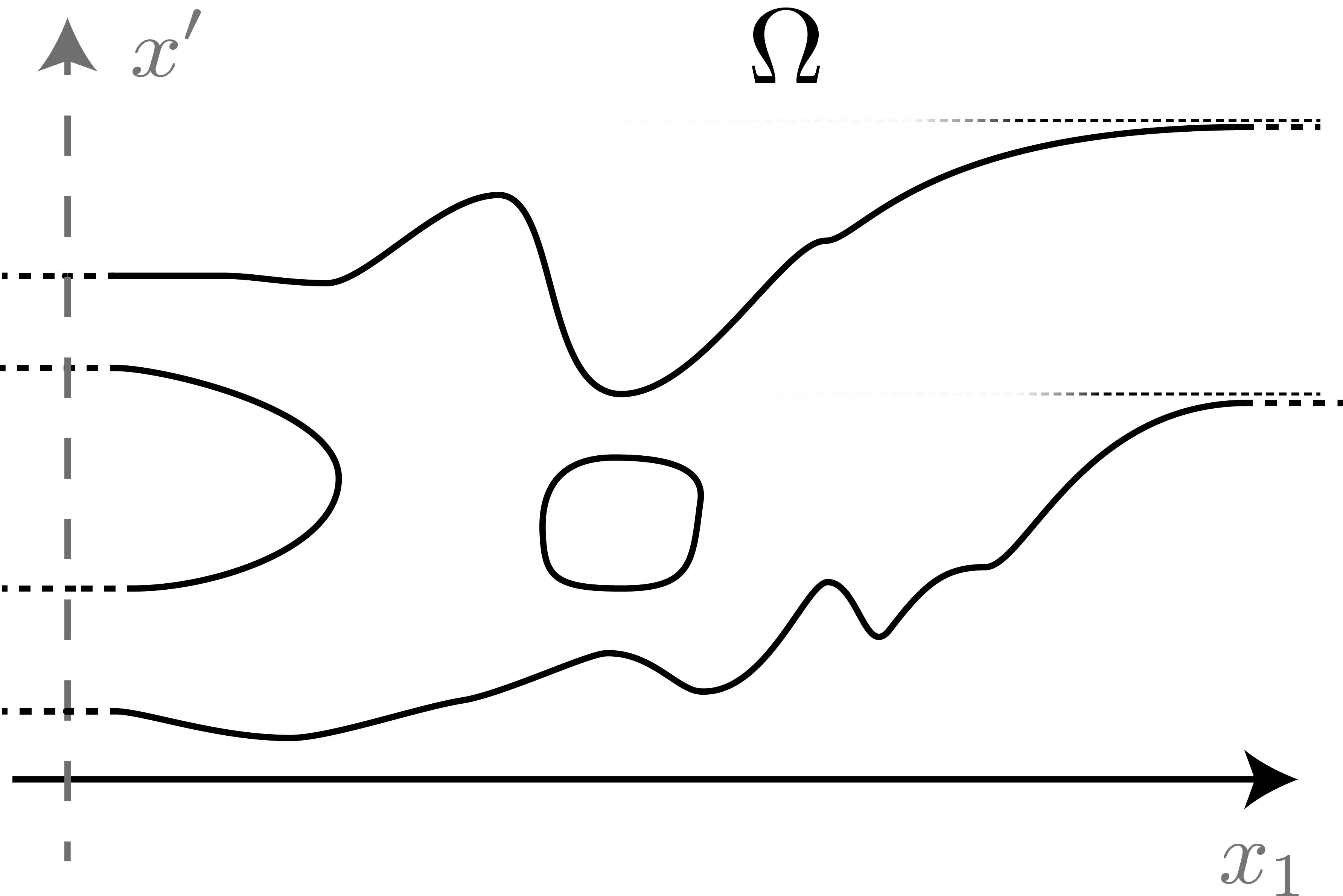}
    \caption{Examples of asymptotically convex cylinder\label{ANG_IntG_fig:AsymptoticCylinder}}
\end{figure}
The following result deals with non-degenerate patterns.
\begin{theorem}\label{th:AsymptoticNonDegenerate}
Let $\Omega\subset\R^N$ be a uniformly smooth cylindrical domain (with varying section) on the $x_1$ axis, which converges to a straight cylinder $\Omega_\infty:=\R\times\omega_\infty$ when $x_1\to+\infty$ (\autoref{DefConvergenceDomain}). Suppose that $\omega_\infty$ is convex, and let $u$ be a bounded non-degenerate pattern of~\eqref{ANG_Intro_EquationSemilineaire}. Then, $u(x_1,\cdot)$ converges $C^2_{loc}$ to a stable root of $f$ when $x_1\to+\infty$. 
\end{theorem}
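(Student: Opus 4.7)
The plan is to run a translation-compactness argument along the axis of the cylinder: extract a $C^2_{loc}$ subsequential limit of $u$ on the convex limit cylinder $\Omega_\infty$, show that the limit inherits the non-degenerate stability of $u$, invoke \autoref{thmStableNonDegenerate} on the convex domain $\Omega_\infty$ to force the limit to be a constant strictly stable root of $f$, and finally upgrade subsequential to full convergence by using that such roots are isolated in $\R$.

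Concretely, I fix any sequence $t_k\to+\infty$ and set $u_k(x):=u(x_1+t_k,x')$ on the translated domain $\Omega_k:=\Omega-t_k e_1$. Thanks to the uniform smoothness of $\D\Omega$, the uniform $L^\infty$-bound on $u$, and Schauder estimates up to the boundary for the Neumann problem $-\Delta u_k=f(u_k)$, the family $(u_k)$ is uniformly bounded in $C^{2,\alpha}_{loc}(\overline{\Omega_\infty})$, and Arzel\`a--Ascoli extracts a subsequence converging in $C^2_{loc}(\overline{\Omega_\infty})$ to a bounded solution $u_\infty$ of~\eqref{ANG_Intro_EquationSemilineaire} on $\Omega_\infty$, with $\D_\nu u_\infty=0$ on $\D\Omega_\infty$. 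To pass stability to the limit I pick any $\phi\in C^1_c(\overline{\Omega_\infty})$; for $k$ large, its translate $\phi(\cdot-t_k e_1)$ is an admissible test function for $u$ in~\eqref{DefLambda} (this is where \autoref{DefConvergenceDomain} is used). Changing variables and using $f'(u_k)\to f'(u_\infty)$ uniformly on the support of $\phi$, the non-degeneracy inequality $\mathcal{F}(\phi(\cdot-t_k e_1))\geq \lambda_1\Vert\phi\Vert_{L^2}^2$ passes to the limit as
\[\int_{\Omega_\infty}\vert\nabla\phi\vert^2-f'(u_\infty)\phi^2\;\geq\;\lambda_1\int_{\Omega_\infty}\phi^2,\]
so $u_\infty$ is itself a stable non-degenerate solution on $\Omega_\infty$, with generalized principal eigenvalue at least $\lambda_1>0$.

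Since $\omega_\infty$ is convex, $\Omega_\infty=\R\times\omega_\infty$ is a convex unbounded domain, and \autoref{thmStableNonDegenerate} forces $u_\infty\equiv z$ for some $z\in\R$ with $f(z)=0$; evaluating the stability inequality on this constant yields $f'(z)<0$, so $z$ is a strictly stable root of $f$. To upgrade subsequential convergence to convergence of the full trajectory, I use that strictly stable roots are isolated in $\R$ by the implicit function theorem, while the map $t\mapsto u(t,\cdot)$ is relatively compact and continuous in $C^2_{loc}$; its $\omega$-limit set at $+\infty$ is therefore connected and made of such isolated constants, hence a single point.

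The main obstacle I expect is the stability transfer step: it depends on \autoref{DefConvergenceDomain} being strong enough for compactly supported test functions on $\Omega_\infty$ to lift, via translation, to admissible test functions on $\Omega$ for all $k$ large, without introducing spurious boundary terms. If the convergence $\Omega\to\Omega_\infty$ only holds in a weak (Hausdorff-type) sense near the boundary, a careful cut-off together with a trace/extension argument near $\D\Omega$ will be needed; the uniform smoothness of $\Omega$ is precisely what I expect to make this go through cleanly. A secondary subtlety is that non-degeneracy, rather than mere stability, is essential here: it is the strict positivity $\lambda_1>0$ that both persists to $u_\infty$ (allowing \autoref{thmStableNonDegenerate} to apply, as \autoref{thmdimension2} would leave the door open to a planar traveling-wave limit) and forces $f'(z)<0$, guaranteeing the isolation needed in the final uniqueness step.
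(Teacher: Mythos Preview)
Your proposal is correct and follows the same overall architecture as the paper: translate along the axis, extract a $C^2_{loc}$ limit on $\Omega_\infty$, transfer (non-degenerate) stability to the limit, apply \autoref{thmStableNonDegenerate} on the convex $\Omega_\infty$ to get a constant, and then use connectedness of the $\omega$-limit set together with isolation of strictly stable roots to upgrade subsequential to full convergence. The paper packages these steps as \autoref{lambda_infty} and \autoref{LemmaConnected}, exactly the ingredients you invoke.

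The one genuine methodological difference is in the stability-transfer step. You pass the inequality $\lambda_1(u_\infty,\Omega_\infty)\geq\lambda_1(u,\Omega)$ through the Rayleigh quotient, pulling back compactly supported test functions from $\Omega_\infty$ to $\Omega$; this is clean and elementary for self-adjoint problems, and the obstacle you flag (test functions whose support may stick slightly out of the translated $\Omega$) is harmless here because Neumann test functions are just $H^1$, so restricting to $\Omega$ and using that the symmetric difference $(\Omega[y_k]\triangle\Omega_\infty)\cap\operatorname{supp}\phi$ has vanishing measure suffices. The paper instead transports the principal \emph{eigenfunction} $\varphi$ of $(\mathcal L,\mathcal B)$ on $\Omega$: it translates and normalizes $\varphi$, uses a boundary Harnack inequality to control it locally, passes to the limit to obtain a positive supersolution on $\Omega_\infty$, and then invokes the supersolution characterization of $\lambda_1$ (\autoref{eigenfunction}). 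Your route avoids Harnack and the appendix machinery; the paper's route is more robust (it would survive a non-variational setting) and dovetails with its development of the generalized principal eigenvalue. Either way the conclusion is the same.
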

The case of possibly degenerate patterns is treated in \autoref{th:AsymptoticDegenerate}.
Section~\ref{sec:Symmetry} contains other symmetry results, if the limiting domain $\Omega_\infty$ is invariant under a translation or a rotation. In particular, we prove the following.
\begin{corollary}\label{th:AsymptoticCoro}
Let $\Omega\subset\R^{N}$ be a cylindrical domain (with a variable section) on the $x_1$ axis, which converges to a straight cylinder $\Omega_\infty:=\R\times\omega_\infty$, with $w_\infty\subset\R^{n-1}$ bounded, when $x_1\to+\infty$ (\autoref{DefConvergenceDomain}). Let $u$ be a bounded stable non-degenerate solution of~\eqref{ANG_Intro_EquationSemilineaire}. Then, when $x_1\to+\infty$, $u(x_1,\cdot)$ converges $C^2_{loc}$ to $u_\infty(\cdot)$, which is a stable solution in the section $\omega_\infty$:
\begin{equation}\label{EquationInSection}
\left\{\begin{aligned}
&-\Delta_{x'}u_\infty=f(u_\infty) &&\text{in }\omega_\infty,\\
&\D_\nu u_\infty=0 &&\text{on }\D\omega_\infty.
\end{aligned}\right.\end{equation}
\end{corollary}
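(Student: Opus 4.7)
The plan is to exploit the non-degeneracy of $u$ together with the translation invariance of the limiting cylinder $\Omega_\infty=\R\times\omega_\infty$ to obtain compactness, stability, and $x_1$-independence of any subsequential limit of the shifts $u_k(x_1,x'):=u(x_1+k,x')$.

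First I would establish compactness. The uniform smoothness of $\D\Omega$ and the $L^\infty$-bound on $u$, combined with standard Schauder estimates for the Neumann problem, yield a uniform $C^{2,\alpha}_{loc}$ bound on the $u_k$ over any compact subset of $\overline{\Omega_\infty}$ (valid for $k$ large, since $\Omega-ke_1\to\Omega_\infty$ locally by \autoref{DefConvergenceDomain}). Arzelà--Ascoli then yields a $C^2_{loc}$-subsequential limit $u_\infty$ which is a bounded $C^2$ solution of~\eqref{ANG_Intro_EquationSemilineaire} on $\Omega_\infty$.

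Next I would pass non-degenerate stability to the limit. For any $\psi\in H^1$ with compact support in $\Omega_\infty$ and $\|\psi\|_{L^2}=1$, the support of $\psi$ lies in $\Omega-ke_1$ for $k$ sufficiently large, so
\[
\int_{\Omega_\infty}|\nabla\psi|^2-f'(u_k)\psi^2\ \geq\ \lambda_1>0.
\]
Letting $k\to\infty$ and taking the infimum over $\psi$ gives $\lambda_1[u_\infty,\Omega_\infty]\geq\lambda_1>0$, so $u_\infty$ is non-degenerate stable on $\Omega_\infty$.

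The crux is to show that $u_\infty$ is independent of $x_1$. Since $\Omega_\infty$ is translation invariant along $e_1$, for every $a\in\R$ the shift $(u_\infty)_a(x_1,x'):=u_\infty(x_1+a,x')$ is again a non-degenerate stable solution on $\Omega_\infty$, and the map $a\mapsto(u_\infty)_a\in C^2_{loc}$ is continuous. By the isolation of non-degenerate stable solutions (discussed in the appendix and following from an implicit function theorem applied to \eqref{ANG_Intro_EquationSemilineaire} whose linearization at $u_\infty$ is invertible since $\lambda_1[u_\infty,\Omega_\infty]>0$), this one-parameter family must be constant, so $\D_{x_1}u_\infty\equiv 0$. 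Hence $u_\infty=u_\infty(x')$ solves~\eqref{EquationInSection} and is stable. Finally, I would promote subsequential to full convergence: if two distinct subsequential limits existed, the $C^2_{loc}$-continuity of $x_1\mapsto u(x_1,\cdot)$ would furnish a continuum of non-degenerate stable solutions on $\omega_\infty$ interpolating between them, once more contradicting isolation.

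The hard part will be to justify the isolation argument on the \emph{unbounded} cylinder $\Omega_\infty$: this requires a careful choice of function space (e.g.\ a $C^2_b$ or weighted Fredholm framework) in which the linearized operator $-\Delta-f'(u_\infty)$ is invertible under $\lambda_1[u_\infty,\Omega_\infty]>0$ with Neumann boundary conditions. On a bounded cylinder this follows from standard spectral theory, but on $\R\times\omega_\infty$ one must rule out bounded, non-$L^2$ elements of the kernel (such as $\D_{x_1}u_\infty$), which is precisely where the strict positivity of $\lambda_1$, and not merely stability, is used.
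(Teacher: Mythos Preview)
Your architecture---compactness of shifts, transfer of non-degenerate stability to the limit, $x_1$-independence of the limit, and uniqueness of the limit via isolation---matches the paper's. The one genuine difference is how you obtain $\D_{x_1}u_\infty\equiv 0$. The paper does this directly via \autoref{thmstraight2}: since $\Omega_\infty=\R\times\omega_\infty$ is straight in the $x_1$-direction, $v:=\D_{x_1}u_\infty$ is bounded, satisfies the linearized equation~\eqref{LinearEquation}, and inherits $\D_\nu v=0$ on $\D\Omega_\infty$; then \autoref{LemmaStronglyStable} (the cut-off energy estimate) forces $v\equiv 0$ because $\lambda_1(u_\infty,\Omega_\infty)>0$. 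Your route---translate $u_\infty$ and invoke isolation on the unbounded cylinder---also works, and in fact the ``hard part'' you flag is exactly \autoref{LemmaIsolated}, which the paper proves for possibly unbounded domains by the very same cut-off argument, not by an implicit function theorem; so there is no need for a weighted Fredholm framework. Two minor points: for passing $\lambda_1>0$ to the limit, test functions supported up to $\D\Omega_\infty$ need not lie in the translated $\overline{\Omega[y_k]}$ when the boundaries merely $C^{2,\alpha}_{loc}$-converge---the paper circumvents this by transporting the principal eigenfunction instead (\autoref{lambda_infty}); and your last step is cleaner if phrased as ``$\Sigma_u$ is connected (\autoref{LemmaConnected}) and contained in a set that is discrete by \autoref{LemmaIsolated}, hence a singleton,'' since $u(x_1,\cdot)$ for finite $x_1$ is not itself a solution on $\omega_\infty$.
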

The asymptotic symmetry results presented in Section~\ref{sec:Symmetry} actually hold when the stability of the solution is only assumed to hold outside a compact set, see~\autoref{Rmk:StabilityOutsideCompact}.

\subsection{Outline of the paper}
Section~\ref{Sec:FormalApproach} is devoted to some preliminaries and the classical proof of~\autoref{ANG_th:Intro_CHM}.
We study the non-existence of patterns in unbounded domains in section~\ref{sec:UnboundedDomains}, that is, we prove \autoref{thmStableNonDegenerate}, \autoref{thmdimension2}, and also recall classical further symmetry properties for patterns when the domain is invariant under translations or rotations.

In section~\ref{sec:AsymptoticFormulation} we establish asymptotic symmetries for patterns when the domain satisfies a geometrical property asymptotically. We prove \autoref{th:AsymptoticNonDegenerate} and give further properties.

In the Appendix~\ref{sec:GeneralizedPrincipalEigenvalue}, we define and study the notion of generalized principal eigenvalue in unbounded domains. We also discuss the different definitions of stability in Appendix~\ref{Annexe:Stability}, and the isolation of stable solutions in Appendix~\ref{sec:Isolation}.

\section{Preliminaries: the classical case of bounded convex domains}\label{Sec:FormalApproach}
To give an overview of the method and the difficulties arising when dealing with unbounded domains, we recall the classical proof of \autoref{ANG_th:Intro_CHM} which deals with bounded domains. It also allows us to introduce a key lemma.
Let $\Omega$ be a smooth, convex, bounded domain, $u$ a stable solution of \eqref{ANG_Intro_EquationSemilineaire} and set $v_i:=\D_{x_i} u$, for all $i\in\{1,\dots,n\}$. \\

\noindent\textbf{Step 1.} On the one hand, differentiating \eqref{ANG_Intro_EquationSemilineaire} with respect to $x_i$, we find that $v_i:=\D_{x_i}u$ satisfies the linearized equation
\begin{equation}\label{linearization}
    -\Delta v_i - f'(u)v_i=0\quad\text{in }\Omega.
\end{equation}
From an integration by part, we obtain
\begin{equation*}
\mathcal{F}(v_i)=\int_{\D\Omega}v_i\D_\nu v_i=\frac{1}{2}\int_{\D\Omega}\D_\nu v_i^2,
\end{equation*}
with $\mathcal{F}$ from \eqref{DefLambda}. On the other hand, as $u$ is stable, we have $\mathcal{F}(\cdot)\geq0$ and
\begin{equation}\label{SommeDeF}
0\leq \mathcal{F}(v_i)\leq\sum\limits_{k=1}^n \mathcal{F}(v_k)=\frac{1}{2}\int_{\D\Omega}\D_\nu\vert\nabla u\vert^2.
\end{equation}
(If $\Omega$ is unbounded, the computations are not licit and need to be adapted, see section~\ref{Sec:ProofNonDegenerate}.)
\\

\noindent\textbf{Step 2.}
When the domain is convex, the above integral turns out to be nonpositive, as stated in the following key lemma. This is where the \emph{convexity} of the domain comes into play.  It can be found in \cite{Casten1978a,Matano1979a}, but a simple proof is presented at the end of the section for completeness. Note that the lemma remains valid if the domain is unbounded.
\begin{lemma}[\cite{Casten1978a,Matano1979a}] \label{LemmeIntermediaireVariations}
Let $\Omega$ be a smooth convex domain. If $u$ is a $\mathcal{C}^2$ function such that
\begin{equation}\label{Neumann}
\D_\nu u=0\quad \text{on }\D\Omega,
\end{equation}
then 
\begin{equation*}
\D_\nu\vert\nabla u\vert^2\leq 0\quad \text{on }\D\Omega.
\end{equation*}
\end{lemma}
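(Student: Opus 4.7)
The plan is to compute $\partial_\nu |\nabla u|^2$ at an arbitrary boundary point and show that the answer is $-2\,\mathrm{II}(\nabla u,\nabla u)$, where $\mathrm{II}$ denotes the second fundamental form of $\partial\Omega$ with respect to the outward normal. Convexity of $\Omega$ then yields the conclusion immediately, since $\mathrm{II}\geq 0$ in that case.

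Fix $x_0\in\partial\Omega$ and extend the outward unit normal field $\nu$ smoothly to a neighbourhood of $\partial\Omega$. First I would observe that the Neumann condition $\nabla u\cdot \nu=0$ on $\partial\Omega$ forces $\nabla u(x_0)$ to be tangent to $\partial\Omega$. Next, from the definition of $|\nabla u|^2=\sum_i (\partial_{x_i}u)^2$, a direct calculation gives
\begin{equation*}
\partial_\nu |\nabla u|^2 \;=\; 2\sum_{i,j}\nu_j\,(\partial_{x_i}u)(\partial_{x_i x_j}u) \;=\; 2\,D^2 u(\nabla u,\nu).
\end{equation*}
The point is now to express the mixed Hessian term $D^2 u(\nabla u,\nu)$ purely in terms of boundary geometry. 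To that end, I would differentiate the identity $\nabla u\cdot\nu=0$ along a tangent direction $X$ on $\partial\Omega$, obtaining $D^2 u(X,\nu)+\nabla u\cdot(D_X\nu)=0$. Applying this to the tangent vector $X=\nabla u$ yields
\begin{equation*}
D^2 u(\nabla u,\nu) \;=\; -\,\nabla u\cdot D_{\nabla u}\nu \;=\; -\,\mathrm{II}(\nabla u,\nabla u),
\end{equation*}
with the usual sign convention for $\mathrm{II}$ relative to the outward normal.

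Combining these two identities gives $\partial_\nu|\nabla u|^2=-2\,\mathrm{II}(\nabla u,\nabla u)$ on $\partial\Omega$. Convexity of $\Omega$ is equivalent to the second fundamental form of $\partial\Omega$ being positive semidefinite with respect to the outward normal (a fact I would recall briefly, perhaps via the observation that for each principal direction $e_i$ the corresponding curvature $\kappa_i$ satisfies $D_{e_i}\nu=\kappa_i e_i$ with $\kappa_i\geq 0$, as on a sphere). Therefore $\mathrm{II}(\nabla u,\nabla u)\geq 0$, which yields $\partial_\nu|\nabla u|^2\leq 0$ on $\partial\Omega$.

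The only mildly delicate point, and the one I would expect to take the most care with, is sign bookkeeping for the second fundamental form: using the outward normal, one has to verify that convexity really corresponds to $\mathrm{II}\geq 0$ in the convention used above. Once this is fixed, no regularity or boundedness assumption on $\Omega$ enters the argument, which makes the lemma equally valid for unbounded convex domains.
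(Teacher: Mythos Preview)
Your proof is correct and follows essentially the same route as the paper: differentiate the Neumann condition $\nabla u\cdot\nu=0$ along the tangent vector $\nabla u$, then use convexity to control the resulting curvature term. The paper phrases things via a boundary defining function $\rho$ (with $\nabla\rho=\nu$ on $\partial\Omega$) and the Hessian $\nabla^2\rho$ restricted to the tangent space, while you use the second fundamental form $\mathrm{II}$ directly; since $\nabla^2\rho\big|_{T_{x_0}\partial\Omega}=\mathrm{II}$ under that normalization, the two computations are the same up to notation.
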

From the above lemma and the inequality~\eqref{SommeDeF}, we conclude that for all $i\in\{1,\dots,n\}$, we have $\mathcal{F}(v_i)=0$. At this step, if $u$ is not constant (i.e. $v_i\not\equiv0$ for some $i$), then we get $\lambda_1=0$, i.e., $u$ is \emph{stable degenerate}.
\\

\noindent\textbf{Step 3.} Since $\mathcal{F}(v_i)=0$ and $\lambda_1\geq0$, we deduce that $v_i$ minimizes $\mathcal{F}$. It is then classical that $v_i$ is a multiple of the eigenfunction associated to $\lambda_1$, denoted $\varphi$, which is unique (up to a multiplicative constant) and positive in $\overline{\Omega}$. 

(This conclusion may fail in unbounded domains, see section~\ref{Sec:schrodinger}.)
\\

\noindent\textbf{Step 4.} From $\D_\nu u=0$ on the closed surface $\D\Omega$, we deduce that $v_i$ vanishes on some point of the boundary. But as $v_i$ is colinear to $\varphi$, we conclude $v_i\equiv0$, which completes the proof.

(If $\Omega$ is a straight cylinder, the above conclusion fails and $v_i$ may be a nonzero multiple of $\varphi$.)

\paragraph*{}Before proving \autoref{LemmeIntermediaireVariations}, we need the following definition.
\begin{definition}\label{DefRepresentationofTheBoundary}
Let $\Omega\subset\R^n$. A ``representation of the boundary'' is a pair $(\rho, U)$ where $\rho$ is a $C^{2}$ function defined on $U$ a neighborhood of  $\D\Omega$ such that 
\begin{align*}
&\rho(x)
\begin{cases}
<0 &if $x\in\Omega\cap U$\\
=0 &if $x\in\D\Omega$\\
>0 &if $x\in U\backslash\overline\Omega$
\end{cases}  &\text{and }\quad\nabla\rho(x)=\nu(x)\quad \forall x\in\D\Omega,
\end{align*}
where $\nu(x)$ is the outer normal unit vector of $\D\Omega$ at $x$.
\end{definition}
It is classical that such a representation of the boundary always exists for $C^{2,1}$ domains, see e.g. section 6.2 of~\cite{GilbarDavid2015}.

\begin{proof}[of \autoref{LemmeIntermediaireVariations}]
Let us consider $(\rho,U)$ a representation of the boundary for $\Omega$. Equation~\eqref{Neumann} becomes
\begin{equation}
\nabla u\cdot\nabla\rho=0\quad\text{on }\D\Omega.
\end{equation}
As $\nabla u$ is tangential to $\D\Omega$, we can differentiate the above equality with respect to the vector field $\nabla u$. It gives, on $\D\Omega$,
\begin{equation*}
0=\nabla\left(\nabla u\cdot\nabla\rho\right)\cdot\nabla u=\nabla u\cdot\nabla^2 u\cdot\nabla\rho+\nabla u\cdot\nabla^2\rho\cdot\nabla u.
\end{equation*}
From this, we infer
\begin{equation*}
\D_\nu\vert\nabla u\vert^2=\nabla\left(\vert\nabla u\vert^2\right)\cdot\nabla\rho=2\nabla u\cdot\nabla^2 u\cdot\nabla\rho=-2\nabla u\cdot\nabla^2\rho\cdot\nabla u,
\end{equation*}
Since $\Omega$ is convex, for all $x_0\in\D\Omega$, we have that $\nabla^2\rho(x_0)$ is a nonnegative quadratic form in the tangent space of $\D\Omega$ at $x_0$.
As $\nabla u$ is tangential to $\D\Omega$, we deduce from the above equation that $\D_\nu\vert\nabla u\vert^2$ is nonpositive.
\end{proof}
In \cite{Casten1978a}, the authors give the following remarkable geometrical interpretation of the above lemma. Consider a bounded convex domain $\Omega\subset\R^2$. As $u$ satisfies Neumann boundary conditions, its level set cross the border $\D\Omega$ orthogonally. Since the domain is convex, these level sets go apart one from each other as we move outward $\D\Omega$. As $\vert\nabla u\vert$ corresponds to the inverse of the distance of two level sets, it implies that $\vert\nabla u\vert$ decreases as we move outward $\Omega$, hence the result.

\section{Patterns in unbounded domains}\label{sec:UnboundedDomains}

\subsection{Non-degenerate patterns - proof of \autoref{thmStableNonDegenerate}}\label{Sec:ProofNonDegenerate}

The proof follows the same lines as the first two steps of section~\ref{Sec:FormalApproach}. But since $\Omega$ is unbounded, the computations which lead to "$\mathcal{F}(v_i)=0$" are not licit. We shall instead perform the computations on truncated functions.
We denote $v_i:= \D_{x_i} u$ for ${i\in\{1,\dots,n\}}$.
Differentiating~\eqref{ANG_Intro_EquationSemilineaire}, we find that all the $v_i$ satisfy
\begin{equation}\label{LinearEquation_Proof}
v_i(\Delta v_i+f'(u) v_i)\geq 0\quad \text{in } \Omega.
\end{equation}
We consider a cut-off function: for $R>0$, set
\begin{equation}\label{DefCutOff}
\chi_R(x):=\chi\left(\frac{\vert x\vert}{R}\right), \quad \forall x\in\R^n,
\end{equation} 
with $\chi$ a smooth nonnegative function such that $${\chi(z)=
\left\{\begin{aligned}
&1 &&\text{if }0\leq z\leq 1\\
&0 &&\text{if } z\geq 2
\end{aligned}\right.},\quad\quad \vert \chi'\vert\leq 2.$$
Multiplying~\eqref{LinearEquation_Proof} by $\chi_R^2$, integrating on $\Omega$, using the divergence theorem and~\eqref{BorderCondition2} we obtain
\begin{equation}
0\leq \int_{\D\Omega}\chi_R^2v_i\D_\nu v-\int_\Omega \nabla\left[\chi_R^2 v_i\right]\cdot\nabla v_i+\int_\Omega f'(u)\chi_R^2v_i^2.
\end{equation}
Using the identity $\left\vert\nabla\left[\chi_R v_i\right]\right\vert^2 = \nabla\left[\chi_R^2 v_i\right]\cdot\nabla v_i+\vert\nabla\chi_R\vert^2 v_i^2$ and rearranging the terms, we deduce 
\begin{equation}\label{Intermediate_Proof_2}
\lambda_1\int_\Omega \chi_R^2 v_i^2\leq \mathcal{F}\left(\chi_Rv_i\right)
\leq \frac{1}{2}\int_{\D\Omega} \chi_R^2\D_\nu v_i^2+\int_{\substack{\Omega}} \vert\nabla\chi_R\vert^2v_i^2.
\end{equation}
Setting $v=\vert \nabla u\vert$, we have $v^2=v_1^2+\dots+v_n$ and therefore summing the above inequality over $i\in\{1,\dots,n\}$ gives
\begin{equation*}
\lambda_1\int_\Omega \chi_R^2 v^2\leq\frac{1}{2}\int_{\D\Omega} \chi_R^2\D_\nu v^2+\int_{{\Omega}} \vert\nabla\chi_R\vert^2v^2.
\end{equation*}
Since $\Omega$ is convex, \autoref{LemmeIntermediaireVariations} implies that $\D_\nu v^2=\D_\nu \vert\nabla u\vert^2\leq0$ on $\D\Omega$. We deduce
\begin{equation*}
\lambda_1\leq \frac{\int_{{\Omega}} \vert\nabla\chi_R\vert^2v^2}{\int_\Omega \chi_R^2 v^2}
\leq \frac{\frac{4}{R^2}\int_{\Omega\cap\{\vert x\vert<2R\}}v^2}{\int_{\Omega\cap\{\vert x\vert<R\}} v^2}=:4\alpha_R.
\end{equation*}

We claim that
\begin{equation}\label{ClaimAlpha}
\liminf\limits_{R\to+\infty} \alpha_R\leq 0.
\end{equation}
If~\eqref{ClaimAlpha} holds, then $\la_1\leq0$, which completes the proof.
By contradiction, let us assume $\alpha_R\geq\delta>0.$ We use the notation $\mathcal{C}(R):=\int_{\Omega\cap\{\vert x\vert<R\}} v^2$ so that we can write $\alpha_R= \frac{\mathcal{C}(2R)}{R^2\mathcal{C}(R)}.$ The contradictory assumptions now reads $\mathcal{C}(2R)\geq \delta R^2\mathcal{C}(R)$. Iterating this inequality for a fixed $R>0$, we find
\begin{equation*}
 \mathcal{C}(2^{j}R)\geq K\left(\delta R^{2}\right)^j\quad\text{for all integer }j\geq1,
 \end{equation*} where, here and later, positive constants are generically denoted $K$. 
 In addition, $v$ is bounded, hence $\mathcal{C}(R)\leq KR^n$. We have
\begin{equation*}
 K\left(2^jR\right)^n\geq\left(\delta R^{2}\right)^j.
 \end{equation*} 
If $R$ is large enough, we reach a contradiction as $j$ goes to $+\infty$.
Thereby, we have proved~\eqref{ClaimAlpha}, and the proof of \autoref{thmStableNonDegenerate} is complete.

%
%
%
%

%
%
%

\subsection{A Liouville type result, or the simplicity of the principal eigenvalue}\label{Sec:schrodinger}
In this section, we adapt the step 3 of section~\ref{Sec:FormalApproach} to unbounded domains. Let us first introduce the principal eigenfunction $\varphi$, associated to $\lambda_1$.
The Euler-Lagrange equation associated with the functional $\mathcal{F}$ is obtained as a linearization of~\eqref{ANG_Intro_EquationSemilineaire} at $u$:
\begin{equation}\label{ANG_Intro_EquationLinarise}
    \left\{
    \begin{aligned}
        &-\Delta \psi-f'(u)\psi=0, &&\text{in }\Omega,\\\ 
        &\D_\nu \psi=0 &&\text{on }\D\Omega.
   \end{aligned}
    \right.
\end{equation}
If the domain $\Omega$ is bounded, then $\lambda_1$ is an eigenvalue of the linearized operator~\eqref{ANG_Intro_EquationLinarise}, called the \emph{principal eigenvalue}. When the domain is unbounded, we will refer to $\lambda_1$ as the \emph{generalized principal eigenvalue}.
It is associated with an eigenfunction $\varphi$ (\autoref{PropositionEigenfunctionVariation}) which is positive on $\overline\Omega$ and satisfies
\begin{equation}\label{EquationPrincipalEigenfunction}
\left\{
\begin{aligned}
&-\Delta\varphi-f'(u)\varphi=\lambda_1\varphi &&\text{in }\Omega,\\
&\D_\nu\varphi =0 &&\text{on }\D\Omega.
\end{aligned}
\right.
\end{equation}
The term \emph{generalized} comes from the fact that $\varphi$ may not belong to $H^1(\Omega)$. See Appendix~\ref{sec:GeneralizedPrincipalEigenvalue} for more details.

If the domain is bounded, then $\lambda_1$ is simple. This may no longer hold in unbounded domains.
However, the following lemma claims that, if the domain satisfies~\eqref{DomaineGeneral}, any bounded minimizer of~$\mathcal{F}$ is a multiple of~$\varphi$. It is a refinement of Theorem 1.7 in~\cite{Berestycki1997b}.

\begin{lemma}\label{schrodinger2} 
Let $\Omega\subset\R^n$ satisfy \eqref{DomaineGeneral} and let $u$ be a stable solution of \eqref{ANG_Intro_EquationSemilineaire}.
Let $v$ be a $C^1(\overline{\Omega})\cap L^{\infty}(\Omega)$ satisfy
\begin{equation}\label{LinearEquation}
v(\Delta v+f'(u) v)\geq 0\quad \text{in } \Omega.
\end{equation}
Consider the cut-off function introduced in~\eqref{DefCutOff} and assume that there exists a sequence of real positive numbers $(R_n)_{n\geq1}$ diverging to~$+\infty$ such that
\begin{equation}\label{BorderCondition2}
\int_{\D\Omega}\chi_{R_n}^2\D_\nu v^2\leq0,\qquad \forall n=1,2\dots
\end{equation}
Then $v\equiv C\varphi$ for some constant $C\in\R$, where $\varphi$ is a principal eigenfunction associated with $\lambda_1$.
\end{lemma}
\begin{proof}[\autoref{schrodinger2}]
We follow the method of~\cite{Berestycki1997b}. Let us set ${ \sigma=\frac{v}{\varphi}}$ and show that $\sigma$ is constant. From~\eqref{LinearEquation}, we deduce
\begin{equation*}
\sigma\varphi\left(\varphi\Delta \sigma+2\nabla\varphi\cdot\nabla\sigma+\sigma\left(\Delta\varphi+f'(u)\varphi\right)\right)\geq0,\qquad\text{a.e. in }\Omega.
\end{equation*}
From $\lambda_1\geq0$ and the equation satisfied by $\varphi$, we obtain
$${
\sigma\nabla\cdot(\varphi^2\nabla\sigma)\geq0,\qquad\text{a.e. in }\Omega.
}$$
Multiplying by $\chi_{R}^2$ (defined in \eqref{DefCutOff}), integrating on $\Omega$ and using the divergence theorem, we find
\begin{align*}
0&\leq \int_{\D\Omega}\chi_{R}^2\sigma\varphi^2\D_\nu\sigma-\int_\Omega\varphi^2\nabla\left(\chi_{R}^2\sigma\right)\cdot\nabla\sigma\\
&=\int_{\D\Omega}\chi_{R}^2\sigma\varphi^2\D_\nu\sigma
-\int_\Omega\varphi^2\chi_{R}^2\vert\nabla\sigma\vert^2
-2\int_\Omega\varphi^2\chi_{R}\sigma\nabla\chi_{R}\cdot\nabla\sigma.
\end{align*}
Since $\D_\nu\varphi=0$ on $\D\Omega$, the boundary term reads $\int_{\D\Omega}\chi_{R}^2v\D_\nu v$, which is nonpositive from \eqref{BorderCondition2} when $R=R_n$. 
From Cauchy-Schwarz inequality, we deduce
\begin{equation}\label{inequality}
\int_\Omega\chi_{R_n}^2\varphi^2\vert\nabla\sigma\vert^2
\leq 2\sqrt{\int_{\substack{\Omega_{2R_n}\backslash\Omega_{R_n}}}\chi_{R_n}^2\varphi^2\vert\nabla\sigma\vert^2}\sqrt{\int_\Omega v^2\vert\nabla\chi_{R_n}\vert^2},
\end{equation}
where $\Omega_R:=\Omega\cap\{\vert x\vert < R\}$.

Recalling that $\chi_R(x):=\chi\left(\frac{\vert x\vert}{R}\right)$ with $\vert \chi'\vert\leq 2$, assumption~\eqref{DomaineGeneral} implies
\begin{equation}\label{claimBounded}
\int_\Omega v^2\vert\nabla\chi_{R}\vert^2\text{ is bounded, uniformly in }R\geq1.
\end{equation}
From \eqref{inequality}, we deduce that
$\int_\Omega \chi_{R_n}^2\varphi^2\vert\nabla\sigma\vert^2$ is uniformly bounded. Using~\eqref{inequality} again, we infer that it converges to $0$ as $R_n\to\infty$. At the limit, we find
${
\int_\Omega\varphi^2\vert\nabla\sigma\vert^2\leq0.
}$
Hence $\nabla\sigma=0$, which ends the proof.
\end{proof}

The cornerstone of the proof is that $\sigma\nabla\cdot(\varphi^2\nabla\sigma)\geq0$ implies $\nabla \sigma=0$, where $\sigma:=\frac{v}{\varphi}$. The litterature refers to this property as a \emph{Liouville property}. Originally introduced in \cite{Berestycki1997b}, it has been extensively discussed (see \cite{Ambrosio2000,Barlow2000,Gazzola,Ghoussoub1998,Moschini2005}) and used to derive numerous results (e.g. \cite{Berestyckip,Cabrea,Cabre,Dancer2004,Alberti2001,Dupaigne2008}), in particular to prove the De Giorgi's conjecture in low dimensions.
\autoref{schrodinger2} is a refinement of this property for domains with a boundary, instead of $\Omega=\R^n$. This is why we need the boundary condition~\eqref{BorderCondition2}.

This is the only step where \eqref{DomaineGeneral} is needed, and it is thus a natural question to ask if this assumption can be relaxed. In the proof, \eqref{DomaineGeneral} is used to derive \eqref{claimBounded}, thus the choice of $\chi_R$ seems crucial. However, in~\cite{Ghoussoub1998}, the authors consider the optimal $\chi_R$ by taking the capacitary test function (see e.g.~\cite{Helms1969}), i.e., a solution of the minimization problem
\begin{equation}\label{minimization}
\inf\limits_{\chi\in H^1(\R^2)}\left[\int_{R\leq\vert x\vert\leq R'}\vert \nabla\chi(x)\vert^2\mathrm{d}x,\  \xi(x)=\left\{
\begin{aligned}
&1 &&\text{ if } \vert x\vert\leq R\\
&0 &&\text{ if } \vert x\vert\geq R'
\end{aligned} \right.\right].
\end{equation}
That, in fact, does not allow to substantially relax condition \eqref{DomaineGeneral}.
In \cite{Barlow1998}, Barlow uses a probabilistic approach to establish that the aforementioned Liouville property (and consequently \autoref{schrodinger2}) does not hold in $\Omega=\R^n$, $n\geq3$. 
It is thus reasonable to think that condition \eqref{DomaineGeneral} cannot be relaxed, yet this is an open question. We also cite~\cite{Gazzola}, in which \eqref{DomaineGeneral} is proved to be sharp, however, we point out that, there, the condition $v\in L^\infty$ is not satisfied.
Note also that, in the present work, we only apply \autoref{schrodinger2} to functions $v$, which are derivatives of $u$, which is a stronger condition than \eqref{LinearEquation}. In this context, not much is known about whether~\eqref{DomaineGeneral} could be relaxed. 
Indeed, up to the author's knowledge, the only available counterexample is for $\Omega=\R^n$, $n\geq7$~\cite{Pino2011,Pacard2013}.

However, we can sometimes relax~\eqref{DomaineGeneral} under further assumptions, either on $\Omega$, $f$, $v$, or $\varphi$. 
From a remark in \cite{Dupaigne2008}, if $f\geq 0$, we can relax~\eqref{DomaineGeneral} to $\sup_{R\to+\infty}\frac{\vert\Omega\cap\{\vert x\vert\leq R\}\vert}{R^4}<+\infty$. 
We can also adapt the arguments of \cite{Berestycki1997b,Cabrea,Moschini2005} to show that assumption~\eqref{DomaineGeneral} can be replaced by $v\in{H}^1(\Omega)$, or $v=o(\vert x\vert^{1-\frac{n}{2}})$, or $\inf_{\Omega}\varphi>0$.
In addition, we point out that \autoref{schrodinger2} holds for a large class of domains satisfying
\begin{equation}
\sup_{R\to+\infty}\frac{\left\vert \Omega\cap\{\vert x\vert\leq R\}\right\vert}{R^2\ln R}<+\infty.
\end{equation}
Namely, let $\Omega$ be of the form
\begin{equation}
\Omega:=\left\{(x,x')\in\R^2\times\R^n: x'\in\omega(x)\right\},
\end{equation}
where, $\forall x\in\R^n$, $\omega(x)\subset\R^n$ is bounded and
\begin{equation}
\sup_{\vert x\vert\to+\infty}\frac{\left\vert\omega(x)\right\vert}{\ln\vert x\vert}<+\infty.
\end{equation}
Then, to show~\eqref{claimBounded}, we use the cut-off
\begin{equation}
\chi_{R}(x)=\left\{\begin{aligned}
&1 &&\text{if }\vert x\vert\leq R,\\
&\frac{\ln R^2-\ln{\vert x\vert}}{\ln{R^2}-\ln{R}} &&\text{if }R\leq \vert x\vert\leq R^2,\\
&0 &&\text{if } \vert x\vert \geq R^2.
\end{aligned}\right.
\end{equation}
This cut-off was first introduced in~\cite{Ghoussoub1998} as a solution of~\eqref{minimization} for $n=2$.

\subsection{Non-existence of patterns - proof of \autoref{thmdimension2}}\label{Sec:ProofConvex}

We prove~\autoref{thmdimension2}.
Let $u$ be a stable solution of~\eqref{ANG_Intro_EquationSemilineaire} and consider $\varphi$ a principal eigenfunction associated with $\lambda_1$.
As a consequence of \autoref{schrodinger2} and \autoref{LemmeIntermediaireVariations}, we have the following intermediate result.
\begin{lemma}\label{keylemma}
For any $\xi\in\R^n$, $\nabla u\cdot \xi\equiv C_\xi\varphi$ for some constant $C_\xi$.
\end{lemma}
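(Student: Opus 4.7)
The plan is to apply \autoref{schrodinger2} to the partial derivatives $v_i:=\D_{x_i}u$, $i\in\{1,\dots,n\}$, and then to conclude by linearity. Each $v_i$ is smooth and bounded (global Schauder estimates applied to the bounded solution $u$), and, by differentiating \eqref{ANG_Intro_EquationSemilineaire} with respect to $x_i$, satisfies \eqref{LinearEquation} with equality. The main obstacle is that the boundary hypothesis \eqref{BorderCondition2} of \autoref{schrodinger2} is not available for the $v_i$ individually: \autoref{LemmeIntermediaireVariations} only delivers the pointwise inequality $\D_\nu|\nabla u|^2=\sum_i\D_\nu v_i^2\leq 0$ on $\D\Omega$, which controls the sign of the \emph{sum} of boundary integrals but says nothing about each term separately.

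To circumvent this, I would redo the proof of \autoref{schrodinger2} simultaneously for all coordinate directions and sum. Setting $\sigma_i:=v_i/\varphi$, each $\sigma_i$ satisfies $\sigma_i\nabla\cdot(\varphi^2\nabla\sigma_i)\geq 0$ in $\Omega$ (with equality, in fact), exactly as in the proof of \autoref{schrodinger2}. After multiplying by $\chi_R^2$, integrating by parts on $\Omega$, and summing over $i$, the individual boundary contributions $\tfrac{1}{2}\int_{\D\Omega}\chi_R^2\D_\nu v_i^2$ collapse into $\tfrac{1}{2}\int_{\D\Omega}\chi_R^2\D_\nu|\nabla u|^2$, which is nonpositive by \autoref{LemmeIntermediaireVariations}. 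This yields
\[
\sum_i\int_\Omega \chi_R^2\varphi^2|\nabla\sigma_i|^2 \;\leq\; -2\sum_i\int_\Omega \chi_R\varphi^2\sigma_i\,\nabla\chi_R\cdot\nabla\sigma_i.
\]

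From here I would run the same Cauchy--Schwarz argument as in \autoref{schrodinger2}. Exploiting the identity $\sum_i \sigma_i^2\varphi^2=|\nabla u|^2$, which is uniformly bounded since $\nabla u$ is bounded by Schauder, together with \eqref{DomaineGeneral} to control $\int_\Omega|\nabla u|^2|\nabla\chi_R|^2$ uniformly in $R$, I would first obtain that $\sum_i\int_\Omega \chi_R^2\varphi^2|\nabla\sigma_i|^2$ remains bounded as $R\to+\infty$, and then, feeding this bound back into the inequality and letting $R\to+\infty$, that $\nabla\sigma_i\equiv 0$ for each $i$. Consequently $v_i=C_i\varphi$ for some constants $C_i\in\R$, and by linearity $\nabla u\cdot\xi=\sum_i\xi_i v_i=C_\xi\varphi$ for every $\xi\in\R^n$, with $C_\xi:=\sum_i \xi_i C_i$, as claimed.
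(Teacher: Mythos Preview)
Your argument is correct. Both approaches rest on the same ingredients---the linearized equation for the $v_i$, \autoref{LemmeIntermediaireVariations} for the collective boundary control, and the Liouville-type estimate behind \autoref{schrodinger2}---but they are organized differently.

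The paper first asserts that each individual boundary term $\int_{\D\Omega}\chi_R^2\D_\nu v_i^2$ is nonnegative (invoking \autoref{schrodinger2} ``in particular''), then combines this with the nonpositivity of their sum from \autoref{LemmeIntermediaireVariations} to conclude that every term vanishes; \eqref{BorderCondition2} then holds for each $v_i$ and \autoref{schrodinger2} applies directly. You instead sidestep this separation entirely: by summing the inequality $\sigma_i\nabla\cdot(\varphi^2\nabla\sigma_i)\geq 0$ over all $i$ \emph{before} integrating by parts, the boundary contributions assemble into $\tfrac12\int_{\D\Omega}\chi_R^2\D_\nu|\nabla u|^2$, which is controlled in one stroke by \autoref{LemmeIntermediaireVariations}, and the remainder of the proof of \autoref{schrodinger2} carries through verbatim with $\sum_i v_i^2=|\nabla u|^2$ in place of $v^2$ (a further Cauchy--Schwarz on the index $i$ handles the cross term). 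Your route is more transparent precisely on the point you flagged as delicate---the sign of the individual boundary terms---and effectively amounts to running the Liouville argument once on the vector $(\sigma_1,\dots,\sigma_n)$ rather than $n$ times on its components.
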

\begin{proof}
We assume without loss of generality that $\vert \xi\vert=1$, and that $\xi$ coincide with $e_1$ for $(e_1,\dots,e_n)$ an orthonormal basis of $\R^n$. We set $v_i:=\D_{x_i}u$ for $i\in\{1,\cdots,n\}$. Differentiating \eqref{ANG_Intro_EquationSemilineaire}, we find that $v_i$ satisfies~\eqref{LinearEquation}. Moreover, we know by assumption that the $v_i$ are bounded. 

Now, we show that all the $v_i$ satisfy~\eqref{BorderCondition2}.
On the one hand, since $\Omega$ is convex, \autoref{LemmeIntermediaireVariations} implies
\begin{equation*}
\sum\limits_{i=1}^n\int_{\D\Omega}\chi_R^2\D_\nu v_i^2=\int_{\D\Omega}\chi_R^2 \D_\nu\vert\nabla u\vert^2\leq 0.
\end{equation*}
On the other hand, \autoref{schrodinger2} implies in particular that for any $i\in\{1,\dots,n\}$ we have $\int_{\D\Omega}\chi_R^2 \D_\nu v_i^2\geq0$ for all $R\gg 1$, i.e., for $R$ large enough, all the terms of the above sum are nonnegative. As the sum is nonpositive, all the terms must be zero. 
Then, we apply \autoref{schrodinger2} to conclude.
\end{proof}

Let us complete the proof of \autoref{thmdimension2}.
The mapping
\begin{equation}
\begin{array}{ccc}
\R^n & \to & \R \\ 
\xi & \mapsto & C_\xi
\end{array} 
\end{equation}
is linear and therefore vanishes on a hyperplane $H$. Setting $e\in\mathbb{S}^{n-1}$ the unit vector orthogonal to $H$, we have $\nabla u=\vert\nabla u\vert e$, hence $u$ varies only in the direction $e$ (in other words, $u$ is a function of only one scalar variable). 

If $\Omega$ is not straight in the direction $e$, there exists a point $x_0\in\D\Omega$ on which the outer normal derivative is not colinear with $e$. From $\D_\nu u=0$ on $\D\Omega$, we deduce $\D_{e} u(x_0)=0$. From $\varphi>0$ on $\overline\Omega$, we deduce $\D_{e} u \equiv0$, thus $u$ is constant.

If $\Omega$ is straight in the direction $e$, then $\D_{e} u$ may be a nonzero multiple of $\varphi$. To fix ideas, we assume that $e$ corresponds to the $x_1$ direction.
Since $v_1\equiv C_1\varphi$, it is of constant sign, hence $u$ is flat and monotic.

Let us now assume that $u$ is bounded. First, note that, since $u$ is monotonic, it has a limit $z^+$ when $x_1\to+\infty$. Setting $u_n(x_1)=u(x_1+n)$ and using classical elliptic estimates, we can extract a subsequence that $C^2_{loc}$-converges to a stable solution $u_\infty$ of \eqref{ANG_Intro_EquationSemilineaire} (See section~\ref{sec:AsymptoticFormulation} for more details. Note also that $\Omega$ is invariant under translation in the $x_1$ direction). From $u_\infty\equiv z^+$, we deduce that $z^+$ must be a stable root of $f$. Identically, when $x_1\to-\infty$, $u$ converges to a stable root of $f$, denoted $z^-$. 

If $z^+=z^-$, then $u$ is constant. Let us assume $z^-\neq z^+$,  and fix $M>0$. Multiplying $-u''=f(u)$  by $u'$ and integrating on ${x_1\in[-M,M]}$ gives
\begin{equation}
\frac{1}{2}\left(u'(-M)^2-u'(M)^2\right)=\int_{u(-M)}^{u(M)} f.
\end{equation}
As $u'(\pm\infty)=0$ (indeed, $u'$ is integrable and $u''$ is bounded), when $M$ goes to $+\infty$ we obtain $\int_{z^-}^{z^+}f=0$. The proof of \autoref{thmdimension2} is thereby complete.

\subsection{Further symmetries}\label{Sec:SymmetryProperties}

In this section, we establish further symmetries for patterns in unbounded domains. Namely, we shall see that patterns inherit from the domain's invariance with respect to translations and planar rotations. 
The results of this section are mostly classical, except for some minor modifications. We provide some proofs for completeness.

\paragraph*{}
The following proposition deals with cylinders, possibly not convex, which are straight in some directions. This result is somehow already contained in~\cite{Ghoussoub1998,Dancer2004,Farina2013a,Farina2013b}.
\begin{proposition}\label{thmstraight2}
Let $\Omega=\R^n\times\omega$ with $\omega\subset\R^m$ bounded and let $u$ be a stable solution of~\eqref{ANG_Intro_EquationSemilineaire}. For $x\in\Omega$, we generically denote $x=(x_1,\dots,x_n,x'_1,\dots,x'_m)$. 
\begin{enumerate}
\item  If $u$ is stable non-degenerate, then $u$ does not depend on $(x_1,\dots,x_n)$.
\item If $u$ is stable and $n=1$, then $u$ is monotonic with respect to $x_1$.
\item If $u$ is stable and $n=2$, the dependance of $u$ with respect to $(x_1,x_2)$ occurs through a single scalar variable $x_0\in\R$. Moreover, $u$ is monotonic with respect to $x_0$.
\end{enumerate}
\end{proposition}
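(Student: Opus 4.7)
The plan is to follow the blueprint of \autoref{Sec:ProofNonDegenerate} and \autoref{Sec:ProofConvex}, exploiting the special geometry $\Omega=\R^n\times\omega$. The key observation is that on $\partial\Omega=\R^n\times\partial\omega$ the outward normal $\nu$ has zero component along the first $n$ coordinates: $\nu(x_1,\dots,x_n,x')=(0,\dots,0,\nu_\omega(x'))$. Therefore, differentiating the Neumann condition $\partial_\nu u=\nu_\omega\cdot\nabla_{x'}u=0$ with respect to $x_i$ for $i\in\{1,\dots,n\}$ yields $\partial_\nu v_i=0$ on $\partial\Omega$, where $v_i:=\partial_{x_i}u$. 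In particular, $\int_{\partial\Omega}\chi_R^2\,\partial_\nu v_i^2=0$ for every $R$, so \eqref{BorderCondition2} holds trivially. Combined with \eqref{LinearEquation}, obtained by differentiating \eqref{ANG_Intro_EquationSemilineaire}, and boundedness of the $v_i$ from global Schauder estimates, each such $v_i$ satisfies the hypotheses of \autoref{LemmaStronglyStable} and, when the growth bound is available, of \autoref{schrodinger2}.

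Item~1 is then immediate: the non-degeneracy $\lambda_1>0$ and \autoref{LemmaStronglyStable} yield $v_i\equiv 0$ for every $i\in\{1,\dots,n\}$, whence $u$ does not depend on $(x_1,\dots,x_n)$. No convexity assumption on $\omega$ is needed here.

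For items~2 and~3, I first observe that $\Omega=\R^n\times\omega$ with $n\leq 2$ and $\omega$ bounded satisfies the growth condition \eqref{DomaineGeneral}, since $|\Omega\cap\{\vert x\vert\leq R\}|\leq |B_R^{\,n}|\,|\omega|=O(R^n)$. Hence \autoref{schrodinger2} applies to each $v_i$ with $i\in\{1,\dots,n\}$ and gives $v_i\equiv C_i\varphi$ for some constant $C_i$, where $\varphi>0$ is a principal eigenfunction associated with $\lambda_1$. In item~2 ($n=1$), since $\varphi>0$ on $\overline\Omega$, $v_1$ has constant sign and $u$ is monotonic in $x_1$. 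In item~3 ($n=2$), we get $\nabla_{(x_1,x_2)}u=(C_1,C_2)\varphi$. If $(C_1,C_2)=(0,0)$, then $u$ is independent of $(x_1,x_2)$ and the statement is trivial; otherwise, setting $e:=(C_1,C_2)/\sqrt{C_1^2+C_2^2}$ and $x_0:=e\cdot(x_1,x_2)$, we see that the dependence of $u$ on $(x_1,x_2)$ factors through the single scalar variable $x_0$, with $\partial_{x_0}u=\sqrt{C_1^2+C_2^2}\,\varphi$ of constant sign, which gives the monotonicity.

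The only delicate point is the opening remark that the Neumann boundary condition is automatically inherited by the tangential derivatives $v_i$, so that the boundary term appearing in the earlier lemmas vanishes identically; this is what allows us to dispense with \autoref{LemmeIntermediaireVariations} (and hence with convexity of $\omega$). Once this is observed, the three items follow almost mechanically from \autoref{LemmaStronglyStable} and \autoref{schrodinger2}, the latter being applicable precisely because of the dimensional restriction $n\leq 2$ in items~2 and~3.
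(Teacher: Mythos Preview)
Your proof is correct and follows essentially the same approach as the paper: observe that the straightness of $\Omega$ in the $x_i$-directions forces $\partial_\nu v_i=0$ on $\partial\Omega$ (so~\eqref{BorderCondition2} holds trivially), then apply \autoref{LemmaStronglyStable} for item~1 and \autoref{schrodinger2} for items~2 and~3. Your version is in fact slightly more detailed than the paper's, in that you explicitly verify the growth condition~\eqref{DomaineGeneral} for $n\leq2$ and spell out the reduction to a single scalar variable in item~3.
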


\begin{proof}[\autoref{thmstraight2}]
We set $v_i:= \D_{x_i} u$ for ${i\in\{1,\dots,n\}}$, which are bounded by assumption. Differentiating~\eqref{ANG_Intro_EquationSemilineaire}, we find that $v_i$ satisfies~\eqref{LinearEquation_Proof}. We can then proceed as in the proof of \autoref{thmStableNonDegenerate} to show that~\eqref{Intermediate_Proof_2} holds, namely
\begin{equation}
\lambda_1\int_\Omega \chi_R^2 v_i^2\leq \mathcal{F}\left(\chi_Rv_i\right)
\leq \frac{1}{2}\int_{\D\Omega} \chi_R^2\D_\nu v_i^2+\int_{\Omega} \vert\nabla\chi_R\vert^2v_i^2,
\end{equation}
where $\chi_R$ is the cut-off function introducedin~\eqref{DefCutOff}. Since $\Omega$ is straight in the directions $x_i$, we have $\D_\nu v_i=0$ on $\D\Omega$. Therefore we have
$$
\lambda_1\int_\Omega \chi_R^2 v_i^2\leq \int_{\Omega} \vert\nabla\chi_R\vert^2v_i^2
$$
and we can conclude as in the proof of \autoref{thmStableNonDegenerate} (in particular using~\eqref{ClaimAlpha}) that if $v_i\not \equiv0$ then $\lambda_1\leq0$. It proves the first astatement.

Next, if $n\leq 2$ then the domain satisfies~\eqref{DomaineGeneral}. Since $v_i$ satisfies~\eqref{LinearEquation} and \eqref{BorderCondition2}, we deduce from \autoref{schrodinger2} that $v_i$ is a multiple of $\varphi$. The second assertion follows from the fact that $\varphi>0$. To prove the last statement, note that, as in the proof of \autoref{thmdimension2}, there exists a direction $\xi\in\mathbb{S}^{n-1}$ for which $\nabla u\cdot \xi\equiv0$.
\end{proof}

\paragraph*{}
We are now interested in cylinders which are invariant with respect to a planar rotation and shall see that patterns inherit this symmetry.
This property was initially stated in~\cite{Matano1979} for bounded domains, and has been extended to manifolds~\cite{Lopes1996a,Jimbo1984,Rubinstein1994,Bandle2012,Punzo2013,Farina2013a,Farina2013b}.

\begin{definition}\label{DefinitionThetaInvariant}
A domain $\Omega\subset\R^{n+2}$ is said to be $\theta$-invariant if $\Omega=\Omega'\times[0,2\pi)$, where $\Omega'\subset\R^n\times\R^+$ in some cylindrical coordinates $(x,r,\theta)\in\R^n\times\R^+\times[0,2\pi)$.
\end{definition}

\noindent When considering a $\theta$-invariant domain $\Omega$, we further assume that the radial section is uniformly bounded:
\begin{equation}\label{AssumptionRBounded}
\sup\limits_{(x,r)\in\Omega'} r<+\infty.
\end{equation}
In particular, it guarantees that if $u$ is a solution of \eqref{ANG_Intro_EquationSemilineaire} in $\Omega$, then $\D_\theta u$ is bounded. Note that this assumption is strictly needed, since planar patterns may exist in $\R^2$.

\begin{proposition}\label{thmtheta} 
Let $\Omega$ be a $\theta$-invariant domain which satisfies~\eqref{AssumptionRBounded} and $u$ be a stable solution of~\eqref{ANG_Intro_EquationSemilineaire}. Assume that~\eqref{DomaineGeneral} holds or that $u$ is stable non-degenerate. Then $\D_\theta u=0$. \newline 
\end{proposition}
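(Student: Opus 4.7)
The plan is to study $v := \D_\theta u$, viewed as a well-defined function on $\Omega$ thanks to the $\theta$-invariance of the domain, and to show that $v$ is both a bounded solution of the linearized equation and a function whose $\theta$-average vanishes, so that the simplicity results of the previous subsections force $v \equiv 0$.

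First, I would verify that $v$ is bounded. Writing $\D_\theta = y_1 \D_{y_2} - y_2 \D_{y_1}$ in the Cartesian coordinates $(y_1, y_2)$ associated with the plane of rotation, we see that $|\D_\theta u| \leq r |\nabla u|$. Since $r$ is uniformly bounded on $\Omega$ by \eqref{AssumptionRBounded} and $\nabla u$ is uniformly bounded thanks to global Schauder estimates (using $u \in L^\infty$ and a uniformly $C^{2,\alpha}$ boundary of the $\theta$-invariant domain), $v$ is bounded on $\overline{\Omega}$. Differentiating \eqref{ANG_Intro_EquationSemilineaire} with respect to $\theta$ gives $-\Delta v = f'(u) v$ in $\Omega$, so \eqref{LinearEquation} holds with equality. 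Likewise, since $\Omega$ is $\theta$-invariant, the outward unit normal $\nu$ does not depend on $\theta$, so $\D_\theta$ commutes with $\D_\nu$ on $\D\Omega$, and the Neumann condition on $u$ yields $\D_\nu v = 0$ on $\D\Omega$. In particular $\D_\nu v^2 = 2v\,\D_\nu v = 0$ on $\D\Omega$, so \eqref{BorderCondition2} holds trivially.

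If $u$ is stable non-degenerate, \autoref{LemmaStronglyStable} applied to $v$ immediately gives $v \equiv 0$. If instead $u$ is only stable and $\Omega$ satisfies \eqref{DomaineGeneral}, \autoref{schrodinger2} gives $v \equiv C\varphi$ for some constant $C$, where $\varphi > 0$ on $\overline{\Omega}$ is a principal eigenfunction associated with $\lambda_1$. The key step is then to extract the value $C = 0$: for any fixed $(x, r) \in \Omega'$, the function $\theta \mapsto u(x, r, \theta)$ is $2\pi$-periodic (because $u$ is single-valued on $\Omega \subset \R^{n+2}$), so
\begin{equation*}
0 = \int_0^{2\pi} \D_\theta u(x, r, \theta)\, d\theta = C \int_0^{2\pi} \varphi(x, r, \theta)\, d\theta,
\end{equation*}
and the positivity of $\varphi$ forces $C = 0$, hence $v \equiv 0$.

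The one point requiring a little care is the boundedness of $v$ near the axis $\{r = 0\}$ (if the axis belongs to $\overline{\Omega}$), but this follows from the factor $r$ in the expression $\D_\theta = r(\cos\theta \D_{y_2} - \sin\theta \D_{y_1})$ rewritten in Cartesian form, together with the boundedness of $\nabla u$; there is no genuine obstacle there. The substantive content really is the combination of the linearized structure (handled by \autoref{LemmaStronglyStable} and \autoref{schrodinger2}) with the $2\pi$-periodicity in $\theta$, the latter being the ingredient that replaces the boundary argument used in the classical setting.
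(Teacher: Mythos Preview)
Your proof is correct and follows essentially the same approach as the paper: define $v=\D_\theta u$, check that it is bounded and satisfies \eqref{LinearEquation} and \eqref{BorderCondition2}, then apply \autoref{LemmaStronglyStable} in the non-degenerate case and \autoref{schrodinger2} plus the $2\pi$-periodicity of $u$ in $\theta$ in the degenerate case. Your write-up is in fact slightly more explicit than the paper's on the boundedness of $v$ and on why the integral of $\D_\theta u$ over a period vanishes.
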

\begin{proof}
We set $v:=\D_\theta u$. Note that, in a Cartesian system of coordinates $$z=(x_1,\dots,x_n,y_1,y_2)\in\R^{n+2},$$ we have $v(z)=y_2\D_{y_1}u(z)-y_1\D_{y_2}u(z).$ We also know from~\eqref{AssumptionRBounded} that $v$ is bounded.
Differentiating~\eqref{ANG_Intro_EquationSemilineaire}, we find that $v$ satisfies~\eqref{LinearEquation_Proof}.
Moreover, as $\Omega$ is $\theta$-invariant, we have $\D_\nu \D_\theta u=0$ on $\D\Omega.$ We can therefore proceed as in the proof of~\autoref{thmStableNonDegenerate} to prove that if $v\not\equiv0$ then $\lambda_1\leq0$. It completes the proof for the case where $u$ is stable non-degenerate.

Assume now that~\eqref{DomaineGeneral} holds. Since $v$ satisfies~\eqref{LinearEquation} and~\eqref{BorderCondition2}, \autoref{schrodinger2} implies ${v\equiv C\varphi}$ for some constant $C$. Thus, $v$ is of constant sign. For $(x,x',r,\theta)\in\Omega$ we have
\begin{equation*}
\int_0^{2\pi} v(x,x',r,\theta)d\theta=0.
\end{equation*} 
Therefore $v\equiv 0$, which completes the proof.
\end{proof}

\begin{remark}
As noticed by Matano~\cite{Matano1979}, since $\D_\theta u=0$, then $w:=\D_r u$ satisfies 
\begin{equation*}
w(\Delta w+f'(u)w)=\frac{1}{r^2}w^2\quad\text{a.e in }\Omega,
\end{equation*}
and $w$ satisfies~\eqref{LinearEquation}. Therefore, if we further assume that $\Omega'$ is convex, then \autoref{thmStableNonDegenerate} and \autoref{thmdimension2} apply. It proves the non-existence of patterns in some non-convex domains, such as rings or torus.

Similar strategies and refined results can be found in the work of Alikakos and Bates~\cite{Alikakos1988}. In this work, the authors investigate in particular the existence of patterns when the equation features a radial source term, the domain is a ball or an annulus, and the reaction term is multiplied by a large factor $\eps^{-1}$ with $\eps\ll1$.
\end{remark}

\section{Asymptotic symmetries}\label{sec:AsymptoticFormulation}
In this section, we establish asymptotic symmetries for patterns. We prove \autoref{th:AsymptoticNonDegenerate} and also the following analogous result for possibly degenerate patterns.
\begin{theorem}\label{th:AsymptoticDegenerate}
Under the same assumptions as in \autoref{th:AsymptoticNonDegenerate}, but with $u$ a bounded pattern which is possibly degenerate. Suppose that the stable roots of $f$, denoted $(z_i)$, are isolated, and that $\int_{z_i}^{z_j} f\neq 0$, for $i\neq j$. Let us further assume that the limiting domain $\Omega_\infty$ satisfies~\eqref{DomaineGeneral}.
Then, $u(x_1,\cdot)$ converges $C^2_{loc}$ to a stable root of $f$ when $x_1\to+\infty$. 
\end{theorem}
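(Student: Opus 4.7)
The plan is to follow the same strategy as for \autoref{th:AsymptoticNonDegenerate}, but to invoke \autoref{thmdimension2} in place of \autoref{thmStableNonDegenerate} and then rule out the degenerate alternative using the integral hypothesis. Given any sequence $t_n\to+\infty$, consider the translates $u_n(y_1,y'):=u(y_1+t_n,y')$ on $\Omega_n:=\Omega-(t_n,0,\ldots,0)$. Because $u$ is bounded and $f\in C^1$, uniform interior and boundary Schauder estimates furnish $C^{2,\alpha}_{loc}$ bounds; combined with the domain convergence $\Omega_n\to\Omega_\infty=\R\times\omega_\infty$ (\autoref{DefConvergenceDomain}), a diagonal extraction yields a subsequence along which $u_n\to u_\infty$ in $C^2_{loc}(\overline{\Omega_\infty})$, where $u_\infty$ is a bounded classical solution of~\eqref{ANG_Intro_EquationSemilineaire} on $\Omega_\infty$. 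Stability passes to the limit: for any $\psi\in C^\infty_c(\overline{\Omega_\infty})$, its support lies in $\Omega_n$ for $n$ large, so the inequality $\int|\nabla\psi|^2-f'(u_n)\psi^2\geq0$ holds and yields $\lambda_1(u_\infty)\geq0$ in the limit.

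Since $\Omega_\infty=\R\times\omega_\infty$ is convex (as $\omega_\infty$ is) and satisfies~\eqref{DomaineGeneral} by assumption, \autoref{thmdimension2} applies to $u_\infty$. As $\Omega_\infty$ \emph{is} a straight cylinder, the conclusion leaves two possibilities: either $u_\infty$ is constant, or it is a monotonic flat solution connecting two distinct stable roots $z^\pm$ of $f$ with $\int_{z^-}^{z^+}f=0$. The second case is directly ruled out by the hypothesis $\int_{z_i}^{z_j}f\neq 0$ for $i\neq j$. Hence $u_\infty$ is constant, and since the constant $u_\infty\equiv c$ satisfies $f(c)=0$ while $\lambda_1\geq0$ forces $f'(c)\leq0$, the value $c$ is a stable root of $f$.

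It remains to show that the limit does not depend on the chosen subsequence. Pick a point $x'_0$ in the interior of $\omega_\infty$ such that $(t,x'_0)\in\Omega$ for every $t$ large enough, and set $g(t):=u(t,x'_0)$. The preceding two paragraphs show that every subsequential limit of $g$ at $+\infty$ is a stable root of $f$, hence the $\omega$-limit set $\bigcap_{T\geq 0}\overline{g([T,+\infty))}$ is contained in the discrete set $\{z_i\}$. Because $g$ is continuous and bounded on $[0,+\infty)$, this $\omega$-limit set is a nonempty closed connected subset of $\R$, hence a closed interval; combined with the isolation of the stable roots, it collapses to a single point $z^\ast$, so $g(t)\to z^\ast$ as $t\to+\infty$. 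A standard contradiction argument then upgrades this to $C^2_{loc}$ convergence of $u(t,\cdot)$ to $z^\ast$: any sequence $t_n$ witnessing a $C^2_{loc}$ gap would, by compactness, yield a $C^2_{loc}$ limit that by the previous analysis is the constant $z^\ast$, a contradiction.

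The principal obstacle lies in excluding the degenerate alternative of \autoref{thmdimension2}: a priori, a subsequential limit on the straight cylinder $\Omega_\infty$ could be a non-constant monotonic heteroclinic connecting two stable roots, and the integral hypothesis $\int_{z_i}^{z_j}f\neq 0$ is exactly the condition that forbids such profiles. A secondary delicate point is the passage of stability across the varying-domain limit, which requires \autoref{DefConvergenceDomain} to be strong enough that test functions supported in $\Omega_\infty$ are eventually admissible in $\Omega_n$.
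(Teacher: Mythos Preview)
Your proof is correct and follows the paper's overall strategy: show every subsequential limit is a constant stable root via \autoref{thmdimension2}, then use a connectedness argument to pin down the limit. Two differences in execution are worth noting. For the stability passage, the paper invokes \autoref{lambda_infty}, which transfers stability through the supersolution characterization of $\lambda_1$ and the principal eigenfunction; your direct variational argument also works, but the claim that ``the support of $\psi\in C^\infty_c(\overline{\Omega_\infty})$ lies in $\Omega_n$'' is not literally true when $\psi$ meets $\partial\Omega_\infty$ and the boundaries of $\Omega_n$ oscillate around it---the routine fix is to extend $\psi$ smoothly to $\R^n$, restrict to $\Omega_n$, and use that $|\,(\Omega_n\triangle\Omega_\infty)\cap\text{supp}\,\psi\,|\to 0$. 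For uniqueness, the paper proves that the full $\omega$-limit set $\Sigma_u\subset L^\infty_{loc}$ is connected (\autoref{LemmaConnected}); your reduction to the scalar trajectory $g(t)=u(t,x'_0)$ is a legitimate shortcut here, since once the limits are known to be constants, evaluation at a single point is injective on $\Sigma_u$ and the elementary connectedness of a real $\omega$-limit set suffices. This is simpler than the function-space argument, though it does not generalize to settings where the limits are non-constant (as in \autoref{th:AsymptoticCoro}).
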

In section~\ref{sec:Symmetry}, we also adapt the symmetry results of~\ref{sec:Symmetry} when the domain is {asymptotically} invariant under a translation or a rotation.

First, let us give a precise definition of the convergence of a domain.
\begin{definition}\label{DefConvergenceDomain}
Let $\Omega\subset\R^n$ be a uniformly smooth domain. For any $y\in\R$, we define the translated domains (on the $x_1$-axis)
$$\Omega[y]:=\left\{(x_1,\dots,x_{n}): (x_1+y,\dots,x_{n})\in\Omega\right\}.$$
We say that $\Omega$ converges to $\Omega_\infty\subset\R^n$ if the boundary of $\Omega[y]$ converges to that of $\Omega_\infty$ when $y\to+\infty$ in the $C^{2,\alpha}_{loc}$ topology.
\end{definition}

\subsection{Convergence to a constant - proof of \autoref{th:AsymptoticNonDegenerate} and \autoref{th:AsymptoticDegenerate}} 

Assume $\Omega\subset\R^{n}$ is uniformly smooth and converges to a convex domain $\Omega_\infty\subset\R^n$. Let $u$ be a solution of~\eqref{ANG_Intro_EquationSemilineaire}.
For technical reasons, we need to extend $u$ in all $\R^{n}$. 
\begin{lemma}\label{LemmaExtension}
We can extend $u$ to a uniformly $C^{2,\alpha}$ function (still denoted $u$) which is defined in $\R^{n}$, coincides with $u$ on $\overline\Omega$, and is identically $0$ in $\R^{n}\backslash U$, $U$ being a neighborhood of $\overline\Omega$.
\end{lemma}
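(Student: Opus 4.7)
The plan is to extend $u$ in two stages: first across $\D\Omega$ into a uniform tubular neighborhood by reflection in normal coordinates, using the Neumann condition to ensure $C^{2,\alpha}$ regularity; then multiply by a smooth cut-off so that the extension vanishes outside a slightly larger neighborhood of $\overline\Omega$.

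Since $\Omega$ is uniformly smooth, there exists $\delta>0$, independent of the boundary point, such that the normal coordinate map $\Phi:\D\Omega\times(-\delta,\delta)\to V_\delta$ defined by $\Phi(y,s):=y+s\nu(y)$ is a uniformly $C^{2,\alpha}$ diffeomorphism onto the tube $V_\delta$ of width $\delta$ about $\D\Omega$, with the interior of $\Omega$ corresponding to $s<0$. In these coordinates $u$ becomes a uniformly $C^{2,\alpha}$ function of $(y,s)$ for $s\leq 0$, and I extend it by even reflection in $s$:
\[
\bar u(y,s) := \begin{cases} u(\Phi(y,s)), & s\leq 0,\\ u(\Phi(y,-s)), & s>0. \end{cases}
\]
Continuity at $s=0$ as well as the matching of tangential derivatives and of $\D_s^2\bar u$ are immediate; for the normal derivative, both one-sided limits $\D_s\bar u(y,0^\pm)$ vanish by the Neumann condition $\D_\nu u=0$, and the mixed second derivatives $\D_{y_i}\D_s\bar u$ match because differentiating the Neumann condition tangentially yields $\D_{y_i}\D_s u(y,0)=0$. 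Hence $\bar u$ is uniformly $C^{2,\alpha}$ on $\D\Omega\times(-\delta,\delta)$, and transporting back via $\Phi^{-1}$ yields a uniformly $C^{2,\alpha}$ function $\tilde u$ on $\overline\Omega\cup V_\delta$, with norm controlled by $\|u\|_{C^{2,\alpha}(\overline\Omega)}$ and the geometric constants of $\Omega$.

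To finish, pick a smooth cut-off $\eta:\R^n\to[0,1]$ equal to $1$ on $\{d\leq\delta/3\}$, vanishing on $\{d\geq 2\delta/3\}$, and with uniform $C^{2,\alpha}$ norm, where $d$ denotes the signed distance to $\D\Omega$ (negative inside $\Omega$). Setting $U:=\{d<2\delta/3\}$ and defining the final extension as $\eta\tilde u$ on $V_\delta$ and $0$ elsewhere produces a uniformly $C^{2,\alpha}$ function on $\R^n$ which agrees with $u$ on $\overline\Omega$ (where $\eta\equiv 1$) and vanishes outside $U$.

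The main (though essentially bookkeeping) obstacle is uniformity: the tube width $\delta$, the $C^{2,\alpha}$ norms of $\Phi$ and $\Phi^{-1}$, and the norm of $\eta$ must all depend only on the uniform smoothness bounds of $\Omega$, not on the particular boundary point considered --- which is exactly what the hypothesis provides. Were the Neumann condition not available, even reflection would fail to match at first order, and one would have to replace it by a higher-order Seeley--Stein reflection; here the simplest reflection already suffices.
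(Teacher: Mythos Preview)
Your argument is correct and follows the same reflection-plus-cutoff strategy as the paper. The only notable difference is in how the $C^{2,\alpha}$ regularity of the reflected function is justified: the paper observes that the even reflection satisfies an elliptic equation on the enlarged set and then invokes global Schauder estimates (Theorem~6.30 in Gilbarg--Trudinger), which simultaneously yields the uniform $C^{2,\alpha}$ bound on $u$ in $\overline\Omega$; you instead check the matching of derivatives across $\{s=0\}$ directly, which is more elementary but tacitly presupposes that $u$ is already uniformly $C^{2,\alpha}$ up to $\partial\Omega$. Either route is fine.

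One small slip: your final extension should read ``$\eta\tilde u$ on $\overline\Omega\cup V_\delta$ and $0$ elsewhere'' rather than ``on $V_\delta$''; as written, the extension would vanish in the part of $\Omega$ lying outside the tube. Your parenthetical ``(where $\eta\equiv 1$)'' makes clear this is what you intended.
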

\begin{proof}
As $u$ satisfies Neumann boundary conditions, we can extend it by reflexion on an open set $U\supset\overline\Omega$. Note that, since the domain is uniformly $C^{2,\alpha}$, we can choose $U$ such that
\begin{equation}
\inf\limits_{(x,y)\in\Omega\times\D U}\vert x-y\vert>0.
\end{equation}
The extended function, denoted $\tilde u$, satsfies an elliptic equation on $U$ for which classical global $C^{2,\alpha}$ estimate hold (see e.g. Theorem 6.30 in~\cite{GilbarDavid2015}). This procedure is classical but technical, see for instance Appendix A in~\cite{Raspail2000}. From this, we infer global $C^{2,\alpha}$ estimates for $\tilde u$ in $\overline\Omega$.
Finally, choosing 
any open set $\tilde U$ such that $\Omega\subset\tilde U\subset U$, we can define $\doubletilde u\in C^{2,\alpha}(\R^n\backslash\Omega)$ which coincides with $\tilde u$ in $\tilde U$ and is identically $0$ in $\R^{N}\backslash U$.
\end{proof}

We consider the $\omega$-limit set
\begin{equation}\label{DefinitionOmegaLimitSet}
\Sigma_u:=\bigcap\limits_{y\in\R}\overline{\left\{u[y']:y'\geq y\right\}}\subset L_{{loc}}^\infty(\R^{N}),
\end{equation}
where 
$$u[y]: (x_1,\dots,x_{N})\mapsto u(x_1+y,\dots,x_{N}),\qquad \forall y\in\R.$$
The topological closure should be understood in the $L_{{loc}}^\infty$ sense.
The key point is the following observation, which states that a solution is ``more stable at infinity''.
\begin{lemma}\label{lambda_infty}
Let $u_\infty\in\Sigma_u$. It is a solution of \eqref{ANG_Intro_EquationSemilineaire} in $\Omega_\infty$.
Moreover, $
\lambda_1(u,\Omega)\leq\lambda_1(u_\infty,\Omega_\infty).
$
\end{lemma}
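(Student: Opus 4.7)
The strategy is to first extract $u_\infty$ as a $C^2_{loc}$ limit of translates of $u$, check that it is indeed a Neumann solution in the limiting domain $\Omega_\infty$, and then derive the eigenvalue inequality by plugging translated test functions into the Rayleigh quotient defining $\lambda_1(u,\Omega)$.

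\textbf{Step 1 (compactness).} \autoref{LemmaExtension} provides a uniformly $C^{2,\alpha}(\R^n)$ extension of $u$; hence the translates $u[y](x):=u(x_1+y,x')$ form a family which is uniformly bounded in $C^{2,\alpha}(\R^n)$. By Arzel\`a--Ascoli, after passing to a subsequence $y_k\to+\infty$, $u[y_k]\to u_\infty$ in $C^2_{loc}(\R^n)$. Any element of $\Sigma_u$ is reached this way.

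\textbf{Step 2 ($u_\infty$ solves the limiting problem).} For $x\in\Omega_\infty$, the $C^{2,\alpha}_{loc}$ boundary convergence of $\Omega[y_k]$ to $\Omega_\infty$ (\autoref{DefConvergenceDomain}) ensures that $x$ lies in $\Omega[y_k]$ for $k$ large, so we can pass to the limit in $-\Delta u[y_k]=f(u[y_k])$ to obtain $-\Delta u_\infty=f(u_\infty)$ inside $\Omega_\infty$. For the boundary condition, given $x_\infty\in\partial\Omega_\infty$, the same boundary convergence provides points $x_k\in\partial\Omega[y_k]$ with $x_k\to x_\infty$ and outward unit normals $\nu_k\to\nu_\infty$; combining $\partial_{\nu_k}u[y_k](x_k)=0$ with the $C^1_{loc}$ convergence of $u[y_k]$ yields $\partial_{\nu_\infty}u_\infty(x_\infty)=0$.

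\textbf{Step 3 (eigenvalue inequality).} Let $\psi\in C_c^\infty(\R^n)$ with $\int_{\Omega_\infty}\psi^2=1$; such functions are dense in the admissible class for $\lambda_1(u_\infty,\Omega_\infty)$. Translate to $\psi_{y_k}(x):=\psi(x-y_ke_1)$, whose restriction to $\Omega$ is an admissible competitor for $\lambda_1(u,\Omega)$. The change of variable $\xi=x-y_ke_1$ rewrites each term of the Rayleigh quotient on $\Omega$ as an integral over $\Omega[y_k]$ with integrand depending on $\psi$ and $u[y_k]$. Since $\chi_{\Omega[y_k]}\to\chi_{\Omega_\infty}$ pointwise a.e.\ (from the boundary convergence) and $u[y_k]\to u_\infty$ uniformly on the fixed compact set $\mathrm{supp}(\psi)$, dominated convergence gives
\begin{equation*}
\frac{\int_\Omega|\nabla\psi_{y_k}|^2-f'(u)\psi_{y_k}^2}{\int_\Omega\psi_{y_k}^2}\;\xrightarrow[k\to\infty]{}\;\int_{\Omega_\infty}|\nabla\psi|^2-f'(u_\infty)\psi^2.
\end{equation*}
The left-hand side is $\geq\lambda_1(u,\Omega)$ for every $k$, so passing to the limit and then taking the infimum over $\psi$ gives $\lambda_1(u,\Omega)\leq\lambda_1(u_\infty,\Omega_\infty)$.

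\textbf{Main obstacle.} The routine-looking pieces (change of variables, dominated convergence) are straightforward, but the delicate point is the bookkeeping at the boundary: transferring the Neumann condition in Step 2 and certifying that the translated test function in Step 3 really is an admissible element of $H^1(\Omega)$ with $L^2$-mass and Dirichlet energy tending to those of $\psi$ on $\Omega_\infty$. Both hinge on exploiting the full strength of $C^{2,\alpha}_{loc}$ (rather than merely Hausdorff) convergence of $\partial\Omega[y_k]$ to $\partial\Omega_\infty$, in order to pair up boundary points with matching normals and to guarantee the pointwise convergence of characteristic functions used in the dominated convergence arguments.
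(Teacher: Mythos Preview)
Your proof is correct. Steps~1 and~2 parallel the paper's treatment, but Step~3 takes a genuinely different route for the eigenvalue inequality.

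The paper argues \emph{forward}: it transports the principal eigenfunction $\varphi$ of $(u,\Omega)$ along the sequence $y_n$, normalises by $\varphi(y_n)$, invokes the boundary Harnack inequality to get local uniform bounds, and extracts a $C^2_{loc}$ limit $\varphi_\infty>0$ in $\overline{\Omega_\infty}$ satisfying $(\Delta+f'(u_\infty)+\lambda_1(u,\Omega))\varphi_\infty\leq 0$; the supersolution characterisation of $\lambda_1$ (\autoref{eigenfunction}) then gives $\lambda_1(u_\infty,\Omega_\infty)\geq\lambda_1(u,\Omega)$. You argue \emph{backward}: you transport compactly supported test functions from $\Omega_\infty$ to $\Omega$ and pass to the limit in the Rayleigh quotient~\eqref{RayleighFormulaEigen}. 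Your approach is lighter---it needs neither the existence of a generalised principal eigenfunction (\autoref{PropositionEigenfunctionVariation}) nor Harnack's inequality, only dominated convergence on a fixed compact set and the density of $C_c^\infty(\R^n)\vert_{\Omega_\infty}$ in $H^1(\Omega_\infty)$. The trade-off is that your argument is tied to the self-adjoint variational formula, whereas the paper's eigenfunction-transport method would carry over unchanged to non-self-adjoint linearised operators, where no Rayleigh quotient is available.
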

\begin{proof}
Let $u_\infty\in\Sigma_u$. There exists a sequence $y_n\to+\infty$ such that $u[y_n]\to u_\infty$. As a consequence of the locally uniform $C^{2,\alpha}$ estimates in $\overline{\Omega}$ (see the proof of \autoref{LemmaExtension}), we deduce that the convergence $u[y_n]\to u_\infty$ occurs in $C_{{loc}}^2$. Thus, $u_\infty$ is a solution of~\eqref{ANG_Intro_EquationSemilineaire} in $\Omega_\infty$.

Consider $\varphi\in C^{2}(\overline\Omega)$ a principal eigenfunction associated with $\lambda_1(u,\Omega)$ (given by \autoref{PropositionEigenfunctionVariation}), define $a_n:=\varphi[y_n](0)$ and $\varphi_n:=\frac{1}{a_n}\varphi[y_n]$. Note that $\lambda_1(u[y_n],\Omega[y_n])=\lambda_1(u,\Omega)$. We have
\begin{equation*}
\left\{\begin{aligned}
&-\Delta\varphi_n-f'(u[y_n])\varphi_n=\lambda_1(u,\Omega)\varphi_n&\text{in }\Omega[y_n],\\
&\D_\nu\varphi_n=0&\text{on }\D\Omega[y_n].
\end{aligned}\right.
\end{equation*}
We can extend by reflexion $\varphi_n$ on a neighborhood of $\overline{\Omega[y_n]}$ on which it satisfies an elliptic equation, thus can apply the Harnack inequality in $\overline{\Omega_\infty}$ to infer that $\varphi_n$ is locally bounded, uniformly in $n$ (for more details, see the proof of Proposition 1 p.30 in \cite{Raspail2000}). Thanks to classical elliptic estimates, we can extract a subsequence (still denoted $n$) such that $\varphi_n$ converges in $C^2_{loc}(\overline\Omega_\infty)$ to some $\varphi_\infty$. Then, $\varphi_\infty>0$ in $\Omega_\infty$ and $(\Delta+f'(u_\infty)+\lambda_1(u,\Omega))\varphi_\infty\leq0$. From \autoref{eigenfunction}, we deduce $\lambda_1(u_\infty,\Omega_\infty)\geq \lambda_1(u,\Omega)$.
\end{proof}

We also need the following general lemma, which is adapted from a classical result (see, for example, Theorem 2.9 in \cite{Matano1979}).
\begin{lemma}\label{LemmaConnected}
$\Sigma_u$ is a connected set in the $L^\infty_{{loc}}(\R^n)$ topology.
\end{lemma}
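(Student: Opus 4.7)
The plan is to reduce the statement to the classical fact that the $\omega$-limit set of a continuous, relatively compact trajectory in a metric space is connected. Working in the $L^\infty_{loc}(\R^n)$ topology (which is metrizable by a standard construction using the sup norms on balls $B_k$), we view the map $\Phi : \R \to L^\infty_{loc}(\R^n)$ defined by $\Phi(y) := u[y]$, with $u$ understood as its extension to $\R^n$ provided by \autoref{LemmaExtension}. The set $\Sigma_u$ is then precisely the $\omega$-limit set of this trajectory at $+\infty$.

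First I would verify the two analytic ingredients. For \emph{relative compactness}, the uniform $C^{2,\alpha}$ bound on the extension of $u$ on $\R^n$ (inherited from \autoref{LemmaExtension}) gives, via Arzelà–Ascoli, that the family $\{u[y] : y \in \R\}$ is relatively compact in $C^{2}_{loc}(\R^n)$ and \emph{a fortiori} in $L^\infty_{loc}(\R^n)$. In particular, $\Sigma_u$, as a decreasing intersection of nonempty closed sets in a compact metric space, is nonempty and compact. For \emph{continuity} of $\Phi$, the uniform bound on $\nabla u$ yields, for every compact $K \subset \R^n$ and every $y_1, y_2 \in \R$,
\begin{equation*}
\|\Phi(y_1) - \Phi(y_2)\|_{L^\infty(K)} \leq \|\nabla u\|_{L^\infty(\R^n)} \, |y_1 - y_2|,
\end{equation*}
so $\Phi$ is (uniformly) continuous from $\R$ into $L^\infty_{loc}(\R^n)$.

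Once these are in place, I would run the standard disconnection argument. Suppose for contradiction that $\Sigma_u = A \sqcup B$ with $A, B$ nonempty, disjoint, and closed in $\Sigma_u$; by compactness of $\Sigma_u$ they are compact, hence one can find disjoint open neighborhoods $U_A \supset A$ and $U_B \supset B$ in $L^\infty_{loc}(\R^n)$, with $d(U_A, U_B) > 0$ in the metric. By definition of $\Sigma_u$, one extracts sequences $y_n \to +\infty$ and $y_n' \to +\infty$ such that $\Phi(y_n) \in U_A$ and $\Phi(y_n') \in U_B$; up to relabeling we may interleave them so that $y_n < y_n' < y_{n+1}$. Continuity of $\Phi$ and connectedness of the interval $[y_n, y_n']$ then force, by an intermediate-value argument with the continuous distance function $y \mapsto d(\Phi(y), U_A)$, the existence of some $\tilde y_n \in (y_n, y_n')$ with $\Phi(\tilde y_n) \notin U_A \cup U_B$. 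By relative compactness one extracts a subsequence along which $\Phi(\tilde y_n) \to w \in \Sigma_u$; but $w$ lies in the closed complement of $U_A \cup U_B$, contradicting $\Sigma_u \subset U_A \cup U_B$.

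The only step I expect to require care is the choice of topology and metric, specifically making sure that ``$d(U_A, U_B) > 0$'' is genuinely available — this is why I would work with the translation-invariant metric on $L^\infty_{loc}(\R^n)$ coming from an exhaustion by balls, rather than with a bare topological formulation; everything else is a textbook compactness plus intermediate-value argument.
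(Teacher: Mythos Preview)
Your proposal is correct and follows essentially the same route as the paper: both arguments use the continuity of $y\mapsto u[y]$ together with relative compactness of the trajectory (from the uniform $C^{2,\alpha}$ bounds of \autoref{LemmaExtension}) and an intermediate-value step on an interval to produce points $\tilde y_n$ whose limits land in $\Sigma_u$ but outside both pieces of a putative disconnection. The only cosmetic difference is that the paper phrases the contradiction via a single clopen subset $Y\subset\Sigma_u$ and a pair $V\subsetneq F$ separating it, whereas you split $\Sigma_u$ into two compact pieces with disjoint open neighborhoods; these are equivalent formulations of the same classical argument.
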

\begin{proof}
By contradiction, assume there exists $Y\subset\Sigma_u$ both open and closed, $Y\neq\emptyset$ and $Y\neq\Sigma_u$. 
Note that $\Sigma_u$ is a compact subset of $L^\infty_{{loc}}(\R^n)$, thus $Y$ is compact. As, in addition, $\Sigma_u\backslash Y$ is a closed set, there exists a an open set $V\subset L^\infty_{{loc}}(\R^n)$ and a closed set $F\subset L^\infty_{{loc}}(\R^n)$ such that $Y\varsubsetneq V\varsubsetneq F$ and $Y=\Sigma_u\cap F$.

Since $Y\neq\emptyset$ and $Y\neq \Sigma_u$, there exists a real sequence $\left(y_n\right)_{n\geq0}$ such that $y_n\to+\infty$ and, for all integer $n\geq0$,
\begin{equation*}
u[y_{2n}]\in V,\quad
\quad u[y_{2n+1}]\not\in F.
\end{equation*}
By continuity of the mapping $y\mapsto u[y]$, we deduce that, for all $n\geq0$, there exists $\tilde y_n\in[y_{2n},y_{2n+1}]$ such that
\begin{equation}\label{ContradictionAsymptotics}
u[\tilde y_n] \in F\backslash V.
\end{equation}
As the sequence $u[\tilde y_n]$ is uniformly $C^{2,\alpha}$, it converges up to an extraction to some $u_\infty\in\Sigma_u$. 
But we also have $u_\infty\in F\backslash V$, thus $u_\infty\in Y$ and $u_\infty\not\in Y$: contradiction.
\end{proof}

We are now ready to prove the main results of this section.
\begin{proof}[\autoref{th:AsymptoticNonDegenerate} and \autoref{th:AsymptoticDegenerate}]
Let $u_\infty\in\Sigma_u$. From \autoref{lambda_infty}, $u_\infty$ is a stable solution of \eqref{ANG_Intro_EquationSemilineaire} in  $\Omega_\infty$, which is convex.
From \autoref{thmStableNonDegenerate} and \autoref{thmdimension2}, we deduce that $u_\infty$ is constant. We have
$$\Sigma_u\subset \mathcal{Z}:=\left\{z\in\R: \quad f(z)=0,\quad f'(z)\leq0\right\}.$$

Assume $f$ has only isolated zeros. Then $\Sigma_u$ is a discrete set, and is also connected (\autoref{LemmaConnected}). Hence, it is a singleton, which achieves the proof.

Assume instead that $u$ is stable non-degenerate, then we have
$$\Sigma_u\subset \mathcal{Z}^\star:=\left\{z\in\mathcal{Z}:f'(z)<0\right\},$$
which is a discrete set, and we conclude as above.
\end{proof}

\begin{remark}\label{Rmk:StabilityOutsideCompact}
The proof of \autoref{th:AsymptoticNonDegenerate} and \autoref{th:AsymptoticDegenerate} and also the proof of the other asymptotic symmetry properties presented in the next section can be adapted to the case where $u$ is only assumed to be stable outside a compact $\mathcal{K}\subset\overline{\Omega}$, i.e., when the condition on the sign of $\lambda_1$ is replaced by a condition on the sign of
\begin{equation}
\lambda_{1,\mathcal{K}}= \inf\limits_{\substack{\psi\in C^1_0(\overline{\Omega}\setminus\mathcal{K})\\ \Vert \psi\Vert_{{L}^2}=1}}\int_{\Omega\setminus\mathcal{K}}\vert\nabla \psi\vert^2-f'(u)\psi^2.
\end{equation}
To adapt the proofs to this case, simply note that if $\Omega$ converges to $\Omega_\infty$, then so does $\Omega\setminus\mathcal{K}$.
\end{remark}

\subsection{Further asymptotic symmetries}\label{sec:Symmetry}
In this section, we establish further asymptotic symmetries for patterns when the domain is asymptotically straight in one direction or invariant with respect to a planar rotation. We define what we mean for a domain to satisfy a geometrical property \emph{asymptotically}. Note that we allow the domain not to converge to a limiting domain (in the sense of \autoref{DefConvergenceDomain}).

\begin{definition}
Let $\Omega\subset\R^n$ be a uniformly $C^{2,\alpha}$ domain. For any real sequence $y_n\to+\infty$, $\Omega[y_n]$ converges (up to an extraction) to some $\Omega_\infty\subset\R^n$ (in the sense of \autoref{DefConvergenceDomain}).
We define the set of all possible limiting domains 
$$\Gamma_\Omega:=\left\{\Omega_\infty\subset\R^n: \exists y_n\to+\infty,\ \Omega[y_n]\text{ converges to }\Omega_\infty\right\}.$$
We say that a domain $\Omega$ satisfies a geometrical property asymptotically if every $\Omega_\infty\in\Gamma_\Omega$ satisfies this property.
\end{definition}

\begin{proposition}\label{PropositionStraight}
Assume that $\Omega$ is asymptotically straight in a direction $e\in\mathbb{S}^N$. Let $u$ be a stable solution of \eqref{ANG_Intro_EquationSemilineaire}. If $u$ is stable non-degenerate, then $\D_eu$ converges to $0$, $C^1_{loc}$-uniformly when $x_1\to+\infty$.
\end{proposition}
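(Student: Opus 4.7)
The plan is to argue by compactness and contradiction, using the $\omega$-limit machinery of the previous subsection to reduce the statement to \autoref{LemmaStronglyStable} on the limiting solution. Suppose, for contradiction, that $\D_e u$ does not converge to $0$ in $C^1_{loc}$ as $x_1\to+\infty$. Then there exist $\delta>0$, a compact set $K\subset\R^n$, and sequences $y_n\to+\infty$ and $x_n\in K$ with
\[
\left\vert\D_e u[y_n](x_n)\right\vert+\left\vert\nabla\D_e u[y_n](x_n)\right\vert\geq\delta.
\]
After extending $u$ as in \autoref{LemmaExtension}, the family $\{u[y_n]\}$ is uniformly bounded in $C^{2,\alpha}_{loc}(\R^n)$. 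Extracting, $\Omega[y_n]$ converges to some $\Omega_\infty\in\Gamma_\Omega$ and $u[y_n]\to u_\infty$ in $C^2_{loc}$, with $u_\infty$ a bounded solution of \eqref{ANG_Intro_EquationSemilineaire} on $\Omega_\infty$, exactly by the argument in the proof of \autoref{lambda_infty}.

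The same argument as in \autoref{lambda_infty} (which extends verbatim to any $\Omega_\infty\in\Gamma_\Omega$) shows that the generalized principal eigenvalue passes to the limit: $\lambda_1(u_\infty,\Omega_\infty)\geq\lambda_1(u,\Omega)>0$, so $u_\infty$ is stable non-degenerate on $\Omega_\infty$. The asymptotic straightness of $\Omega$ in the direction $e$ means that $\Omega_\infty$ is invariant under translation by $e$, hence its outer normal $\nu$ is orthogonal to $e$ at every boundary point. Setting $v:=\D_e u_\infty$, differentiating \eqref{ANG_Intro_EquationSemilineaire} in the direction $e$ yields $-\Delta v-f'(u_\infty)v=0$ in $\Omega_\infty$, and differentiating the Neumann condition $\nabla u_\infty\cdot\nu=0$ in the direction $e$ (using $\D_e\nu\equiv0$) yields $\D_\nu v=0$ on $\D\Omega_\infty$, so in particular $v$ satisfies \eqref{BorderCondition2}. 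Since $v$ is bounded by global Schauder estimates, \autoref{LemmaStronglyStable} applied in $\Omega_\infty$ forces $v\equiv0$.

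The $C^2_{loc}$-convergence $u[y_n]\to u_\infty$ now upgrades to $C^1_{loc}$-convergence $\D_e u[y_n]\to \D_e u_\infty\equiv 0$ on $K$. Extracting a further subsequence with $x_n\to x_\infty\in K$ contradicts the lower bound, finishing the proof. The main delicate point is essentially already absorbed in \autoref{LemmaExtension} and \autoref{lambda_infty}: one must control the joint convergence of $(u[y_n],\Omega[y_n])$ and transmit both the PDE and the stability from $u$ to $u_\infty$. Once this is in place, the vanishing of $\D_e u_\infty$ is a direct application of \autoref{LemmaStronglyStable}, since the straight direction $e$ converts the Neumann condition on $u_\infty$ into a Neumann condition on its derivative in that direction.
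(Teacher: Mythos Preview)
Your argument is correct and follows the same route as the paper's proof: pass to a subsequential limit $(u_\infty,\Omega_\infty)$, use \autoref{lambda_infty} to transfer the non-degenerate stability, and then conclude $\D_e u_\infty\equiv0$ from the straightness of $\Omega_\infty$. The only cosmetic differences are that the paper phrases this as a direct convergence statement rather than by contradiction, and invokes \autoref{thmstraight2} for the vanishing of $\D_e u_\infty$ where you unpack that step and apply \autoref{LemmaStronglyStable} directly; these are the same argument.
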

\begin{proof}
Let $y_n\to+\infty$. Up to an extraction (still denoted $y_n$), $u[y_n]$ converges to some $u_\infty\in\Sigma_u$ in $C^{2,\alpha}_{{loc}}$ and $\Omega[y_n]$ converges to some $\Omega_\infty\in\Gamma_\Omega$. From \autoref{lambda_infty}, we deduce that $u_\infty$ is a stable non-degenerate solution of~\eqref{ANG_Intro_EquationSemilineaire} in $\Omega_\infty$. As $\Omega_\infty$ is straight in the direction $e$,  using \autoref{thmstraight2} we deduce that $\D_e u_\infty\equiv0$, hence the result.
\end{proof}

\autoref{th:AsymptoticCoro} follows from the previous result in the particular case when the direction $e$ coincides with that of $x_1$.
\begin{proof}[\autoref{th:AsymptoticCoro}]
We apply the same method as in the proof of \autoref{th:AsymptoticNonDegenerate}, using \autoref{thmstraight2} and the fact that stable non-degenerate solutions are isolated among solutions (\autoref{LemmaIsolated}).
\end{proof}
Note that, if we assume that the set of stable solutions of~\eqref{EquationInSection} is discrete, then \autoref{th:AsymptoticCoro} extends to possibly degenerate patterns.

We now turn to the case of a domain which is asymptotically invariant with respect to a planar rotation.
\begin{proposition}
Assume $\Omega$ is asymptotically $\theta$-invariant (\autoref{DefinitionThetaInvariant}) and satisfies
\begin{equation}
\sup\limits_{(x,r,\theta)\in\Omega} r<+\infty.
\end{equation}
Let $u$ be a stable solution of~\eqref{ANG_Intro_EquationSemilineaire}. Assume either that $u$ is stable non-degenerate, or that $\Omega$ satisfies~\eqref{DomaineGeneral}. Then $\D_{\theta}u\to0$, $C^1_{loc}$-uniformly when $x_1\to+\infty$.
\end{proposition}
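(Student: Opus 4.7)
The plan is to mimic the proof of \autoref{PropositionStraight}: argue by contradiction, pass to a subsequential limit along a sequence $y_n \to +\infty$ in the $\omega$-limit set $\Sigma_u$, and apply \autoref{thmtheta} to the resulting limit problem on a limit domain $\Omega_\infty \in \Gamma_\Omega$.

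Suppose, for contradiction, that $\partial_\theta u$ does not converge to $0$ uniformly in $C^1_{{loc}}$. Then there exist $\varepsilon > 0$, a compact set $K \subset \R^{n+2}$, a sequence $y_n \to +\infty$, and points $x_n \in K \cap \overline{\Omega[y_n]}$ such that
\begin{equation*}
\bigl|\partial_\theta u[y_n](x_n)\bigr| + \bigl|\nabla \partial_\theta u[y_n](x_n)\bigr| \geq \varepsilon.
\end{equation*}
Using the uniform smoothness of $\Omega$ and \autoref{LemmaExtension} to extend $u$ and secure uniform $C^{2,\alpha}$ estimates, I extract a subsequence along which $\Omega[y_n] \to \Omega_\infty \in \Gamma_\Omega$ (in the sense of \autoref{DefConvergenceDomain}), $u[y_n] \to u_\infty$ in $C^{2,\alpha}_{{loc}}$ for some $u_\infty \in \Sigma_u$, and $x_n \to x_\infty \in K \cap \overline{\Omega_\infty}$. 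Passing to the limit in the above inequality gives $\partial_\theta u_\infty \not\equiv 0$. By asymptotic $\theta$-invariance, $\Omega_\infty$ is itself $\theta$-invariant; the bound $\sup_\Omega r < +\infty$ is preserved by $x_1$-translation, so $\Omega_\infty$ satisfies~\eqref{AssumptionRBounded}; and by \autoref{lambda_infty}, $u_\infty$ solves~\eqref{ANG_Intro_EquationSemilineaire} in $\Omega_\infty$ with $\lambda_1(u_\infty, \Omega_\infty) \geq \lambda_1(u, \Omega)$. Hence, if $u$ is stable non-degenerate, so is $u_\infty$; if instead $\Omega$ satisfies~\eqref{DomaineGeneral}, then the cylindrical structure forced by asymptotic $\theta$-invariance combined with the $r$-bound places $\Omega_\infty$ inside $\R^n \times \overline{B^2_{r_{\max}}}$ in the Cartesian splitting compatible with the rotation, so by Fubini the volume growth of $\Omega_\infty$ is controlled by that of its $\R^n$-projection, which inherits an $O(R^2)$ bound from~\eqref{DomaineGeneral} on $\Omega$. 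In either case, \autoref{thmtheta} applies to $(u_\infty, \Omega_\infty)$ and yields $\partial_\theta u_\infty \equiv 0$, contradicting the previous step.

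The main technical point is the transfer of~\eqref{DomaineGeneral} from $\Omega$ to $\Omega_\infty$ in the possibly degenerate case; this step genuinely uses the tube-like structure imposed by asymptotic $\theta$-invariance together with the $r$-bound, rather than any generic compactness mechanism. The non-degenerate case is otherwise a direct adaptation of \autoref{PropositionStraight}, with \autoref{thmstraight2} replaced by \autoref{thmtheta}.
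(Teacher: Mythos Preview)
Your argument is the paper's: its entire proof reads ``We proceed as in the proof of \autoref{PropositionStraight} and we use \autoref{thmtheta} instead of \autoref{thmstraight2}.'' You carry this out explicitly via the subsequential-limit contradiction and \autoref{lambda_infty}, and you go further than the paper by attempting to justify that $\Omega_\infty$ inherits~\eqref{DomaineGeneral} in the degenerate case---a point the paper leaves entirely implicit.
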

\begin{proof}
We proceed as in the proof of \autoref{PropositionStraight} and we use \autoref{thmtheta} instead of \autoref{thmstraight2}.
\end{proof}

\appendix

\section{Generalized principal eigenvalue}\label{sec:GeneralizedPrincipalEigenvalue}
This section is devoted to defining the \emph{generalized principal eigenvalue} of a linear operator, and to state some properties. The term \emph{generalized} is used when dealing with unbounded domains, in which there may not exist eigenfunctions in $H^1$. Here, we focus on the essential aspects and omit the details: the content of this section will be developed in a forthcoming paper~\cite{Nordmann2019}

\subsection{Definition}

We generally consider a smooth domain $\Omega$ and a linear elliptic operator
\begin{equation}\label{DefinitionEllipticOperator}
\mathcal{L}u(x):=\mathrm{div}\left(A(x)\cdot\nabla u(x)\right)+B(x)\cdot\nabla u(x)+c(x)u(x),\quad\forall x\in\Omega,
\end{equation}
where, $c:\Omega\to\R$, $B:\Omega\to\in\R^n$, and $A: \Omega \to \R^{n\times n}$ such that $A(x)$ is positive-definite (uniformly in $x\in\Omega$). For simplicity, we assume that the coefficients are smooth.
We associate the operator $\mathcal{L}$ with Neumann boundary conditions
\begin{equation}
\mathcal{B} u(x):=\D_{\nu_A} u(x)=0,\quad\forall x\in\D\Omega,
\end{equation}
with $\nu$ the outer normal derivative and $\D_{\nu_A} u:=\nu\cdot A\cdot \nabla u$ the co-normal outer derivative of $u$ associated with $A$. We focus here on Neumann boundary conditions, but we keep the notation $\mathcal{B}$ to emphasize that our statements can be adapted to other boundary conditions.

We consider the following eigenproblem:
\begin{equation}\label{EigenvalueRobin}
\left\{\begin{aligned}
&-\mathcal{L}\psi=\lambda\psi &&\text{in }\Omega,\\
&\mathcal{B} \psi=0 &&\text{on }\D\Omega.
\end{aligned}\right.
\end{equation}
If the domain is bounded, the Krein-Rutman theory gives the existence of an eigenvalue $\lambda_1$ to~\eqref{EigenvalueRobin}, called the \emph{principal eigenvalue}. This eigenvalue is real and minimizes the real part of the spectrum. In addition, $\lambda_1$ is simple, and is the only eigenvalue associated with a positive eigenfunction (called \emph{principal eigenfunction}). We let the reader refer to~\cite{Protter1984,GilbarDavid2015} for more details.
We point out that a fundamental property is that the validity of the Maximum Principle for the operator $(\mathcal{L},\mathcal{B})$ is equivalent to the condition $\lambda_1>0$.

If the domain is unbounded, Krein-Rutman's theory cannot be applied because the elliptic operator does not have compact resolvents. However, we can still define the notion of principal eigenvalue.
Following the approach of~\cite{Berestycki1994,Berestycki2015b,Rossi2020}, we give the following definitions.
\begin{definition}
\begin{itemize}
\item A function $u \in C^{2}(\overline\Omega)$ is said to be a subsolution (resp. supersolution) if
\begin{equation}
\left\{\begin{aligned}
&-\mathcal{L}u\leq 0\  \text{(resp. $\geq0$)}&&\text{in }\Omega,\\
&\mathcal{B}u\leq0\  \text{(resp. $\geq0$)}&&\text{on }\D\Omega.
\end{aligned}\right.
\end{equation}
\item We define the \emph{generalized principal eigenvalue} of $(\mathcal{L},\mathcal{B})$ as
\begin{equation}\label{DefinitionLambda1Robin}
\lambda_1:=\sup\left\{\lambda\in\R: (\mathcal{L}+\lambda,\mathcal{B})\text{ admits a positive supersolution}
 \right\}.
\end{equation}
\end{itemize}
\end{definition}
This definition coincides with the classical definition in the case of a bounded domain, and coincide with the definition~\eqref{DefLambda} when $\mathcal{L}$ is self-adjoint. We are about to see that $\lambda_1$ admits a positive eigenfunction. However, $\lambda_1$ may not be simple.

\subsection{Existence of a positive eigenfunction}

A remarkable property is that, even in unbounded domains, $\lambda_1$ is associated with a positive eigenfunction.
\begin{proposition}\label{PropositionEigenfunctionVariation}
Let $\Omega\subset\R^n$ be a smooth (possibly unbounded) domain and $\mathcal L$ an elliptic operator as in~\eqref{DefinitionEllipticOperator}. 
There exists $\varphi\in C^2(\overline{\Omega})$ which is positive on $\overline{\Omega}$ and satisfies
\begin{equation}\label{EquationGeneralizedEigenfunction}
\left\{\begin{aligned}
&-\mathcal{L}\varphi=\lambda_1 \varphi&\text{in }\Omega,\\
&\mathcal{B} \varphi=0&\text{on }\D\Omega.
\end{aligned}\right.
\end{equation}
We refer to $\varphi$ as a {principal eigenfunction} of~$(\mathcal{L},\mathcal{B})$.
\end{proposition}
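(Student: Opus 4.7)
The plan is to use a classical exhaustion argument in the spirit of Berestycki-Nirenberg-Varadhan adapted to the Neumann setting. Fix an increasing sequence $R_k\to+\infty$ and set $\Omega_k:=\Omega\cap B_{R_k}$. On each $\Omega_k$ I would consider the mixed boundary value problem consisting of the Neumann-type condition $\mathcal B\psi=0$ on $\D\Omega\cap B_{R_k}$ and a Dirichlet condition $\psi=0$ on $\Omega\cap\D B_{R_k}$ (smoothing the junction if needed so that Krein-Rutman applies cleanly). Since $\Omega_k$ is bounded, Krein-Rutman provides a principal eigenvalue $\mu_k\in\R$ together with a principal eigenfunction $\varphi_k\in C^2(\overline{\Omega_k})$ that is positive on $\Omega_k\cup(\D\Omega\cap B_{R_k})$ and vanishes on $\Omega\cap\D B_{R_k}$. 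The whole proof reduces to showing $\mu_k\to\la_1$ and extracting a limit of the (suitably normalized) $\varphi_k$.

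Monotonicity and convergence of eigenvalues come next. Extending $\varphi_k$ by zero to $\Omega_\ell$ for $\ell>k$ yields a nonnegative subsolution of the mixed problem on $\Omega_\ell$ with parameter $\mu_k$, and a classical comparison with the Krein-Rutman principal pair on $\Omega_\ell$ gives $\mu_\ell\leq \mu_k$. Hence $\mu_k\searrow \mu_\infty$ for some $\mu_\infty\in\R\cup\{-\infty\}$. To identify the limit with $\la_1$, I would first prove $\la_1\leq \mu_\infty$: if $\la<\la_1$, the definition of $\la_1$ provides a positive supersolution $\psi$ of $(\mathcal{L}+\la,\mathcal{B})$ on $\Omega$, whose restriction to $\Omega_k$ is a positive supersolution of the mixed problem (the Dirichlet boundary condition $\psi\geq 0$ is trivially met since $\psi>0$), which forces $\la\leq \mu_k$ by the characterization of $\mu_k$. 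The reverse inequality $\mu_\infty\leq \la_1$ follows once the limiting eigenfunction is produced, since it is itself a positive supersolution of $(\mathcal{L}+\mu_\infty,\mathcal{B})$.

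For the passage to the limit, fix $x_0\in\Omega$ and normalize $\varphi_k(x_0)=1$. The key ingredient is Harnack's inequality, valid both in the interior and up to the Neumann portion of $\D\Omega_k$ via reflection across the smooth boundary $\D\Omega$. For any compact set $K\subset\overline\Omega$ and all $k$ large enough so that $K$ sits well inside $\Omega_k$ (away from the Dirichlet boundary $\Omega\cap\D B_{R_k}$), this gives a uniform two-sided bound on $\varphi_k$ on $K$ depending only on $K$ and the coefficients of $\mathcal L$. Coupling this with global Schauder-type estimates up to the Neumann boundary, I would extract by a diagonal argument a subsequence converging in $C^2_{loc}(\overline\Omega)$ to some $\varphi\in C^2(\overline\Omega)$ solving $-\mathcal L\varphi=\mu_\infty\varphi$ in $\Omega$ and $\mathcal B\varphi=0$ on $\D\Omega$. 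Since $\varphi(x_0)=1$, Harnack propagates strict positivity to all of $\overline\Omega$, and combined with the previous paragraph this identifies $\mu_\infty=\la_1$ and delivers the sought eigenfunction.

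The main obstacle I foresee is technical rather than conceptual: ensuring that the Harnack and Schauder estimates hold up to the Neumann boundary \emph{uniformly in $k$}, in particular uncoupled from the "corner" where the Neumann and Dirichlet parts of $\D\Omega_k$ meet. This corner drifts to infinity with $k$, so any fixed compact is eventually far from it, but one must justify the uniformity carefully (for instance by a local reflection across $\D\Omega$ on a neighborhood of any fixed boundary point). A secondary minor point is that $\Omega_k$ is not smooth at the junction, so one should either smooth the intersection $B_{R_k}\cap\Omega$ slightly before invoking Krein-Rutman, or work with the eigenvalue in the weak sense and recover the $C^2$ regularity afterwards by interior and Neumann-boundary elliptic regularity. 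Both issues are well understood and do not introduce any essentially new difficulty.
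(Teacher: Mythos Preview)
Your proposal is correct and follows essentially the same exhaustion argument as the paper: truncate $\Omega$ by balls (the paper uses slightly smoothed domains $D_R$ with $B_R\subset D_R\subset B_{2R}$ precisely to handle the junction issue you flag), impose Dirichlet conditions on the artificial boundary to force monotonicity of the eigenvalues, normalize at a point, and pass to the limit via Harnack plus Schauder estimates and a diagonal extraction. The paper invokes Lieberman's mixed-boundary regularity and Harnack results directly rather than reflection, and it is terser about the identification $\mu_\infty=\lambda_1$ (your two-sided inequality argument is in fact more explicit than what the paper writes), but the structure and all the key ideas coincide.
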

\begin{proof}
The proof follows closely~\cite{Berestycki2015b} (see the proofs of Theorem 3.1 and Proposition 1).
For any $R>0$, let $D_R\subset\R^n$ be a smooth connected open set such that $B_R\subset D_R\subset B_{2R}$, with $B_R$ the ball of radius $R$. We also choose $D_R$ to be increasing with $R$, and to be such that $\D D_R\cap\D \Omega$ is a $C^2$ $(n-2)$-dimensional manifold. Set $\Omega_R:=\Omega\cap D_R$ and consider the eigenvalue problem with mixed boundary conditions
\begin{equation}\label{EigenvalueRobinTruncated}
\begin{aligned}
&-\mathcal{L}\psi=\lambda\psi&&\text{a.e. in }\Omega_R,\\
&\mathcal{B} \psi=0&&\text{a.e. on }\D\Omega\cap D_R,\\
&\psi=0&&\text{a.e on }\Omega\cap \D D_R.
\end{aligned}
\end{equation}
From the results of Liberman~\cite{Lieberman1986}, we know that all classical results (Schauder estimate, Maximum Principle, solvability, etc.) hold from the mixed boundary value problem above. As $\Omega_R$ is bounded, the weak Krein-Rutman theorem (e.g., Corollary 2.2 in~\cite{Nussbaum1981}) provides a pair of principal eigenelements $(\lambda_1^{R},\varphi^{R})$, where $\varphi^R\in W^{2,n}$. From Hopf's lemma, we have $\varphi>0$ on $\overline{\Omega}$. We choose the normalization $ \varphi(0)=1$.
Note that we impose Dirichlet boundary conditions on $\Omega\cap \D D_R$ to ensure the decreasing monotonicity of $R\mapsto\lambda_1^{R}$. Hence, $\lambda_1^{R}$ converge to some $\underline \lambda_1$ when $R\to+\infty$. 

Now, fix a compact $0\in K\subset\overline\Omega$ and assume that $R$ is large enough so that $K\subset\overline{\Omega}_R\backslash D_R$. From Theorem~3.3 in~\cite{Lieberman1987} and Theorem~4.3 in~\cite{Lieberman2001}, we derive a Harnack estimate, that is, 
\begin{equation}
\sup_{K} \varphi^R\leq C\inf_K \varphi^R
\end{equation}
with a constant $C$ independent of $R$. From $\varphi(0)=1$, we deduce that $\varphi^R$ is bounded in $K$, uniformly in $R$. From classical Schauder estimates, we deduce that $\varphi^R$ is $C^{2,\alpha}(K)$, uniformly in $R$.
Up to extraction of a subsequence, $\varphi^{R}$ converges to some $\varphi$ in $C^2(K)$. From a diagonal argument, we are provided with $\varphi\in C^2(\overline{\Omega})$ which satisfies
\begin{equation}
\left\{\begin{aligned}
&-\mathcal{L}\varphi=\underline\lambda_1 \varphi&\text{in }\Omega,\\
&\mathcal{B} \varphi=0&\text{on }\D\Omega,
\end{aligned}\right.
\end{equation}
and $\varphi>0$ on $\overline{\Omega}$.
Consequently $\lambda_1=\underline{\lambda_1}$, which achieves the proof.
\end{proof}
As a direct consequence,
\begin{corollary}\label{eigenfunction}
There exists a positive supersolution of $(\mathcal{L},\mathcal{B})$ in $\Omega$ if and only if $\lambda_1\geq0$.
\end{corollary}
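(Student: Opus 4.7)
The plan is to derive both implications directly from the definition of $\lambda_1$ combined with the existence of a positive principal eigenfunction established in \autoref{PropositionEigenfunctionVariation}. This corollary really just packages the preceding proposition.

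For the ``only if'' direction, I would argue as follows. Suppose there exists $u\in C^2(\overline\Omega)$ with $u>0$, $-\mathcal{L}u\geq 0$ in $\Omega$, and $\mathcal{B}u\geq 0$ on $\partial\Omega$. Then $u$ is automatically a positive supersolution of $(\mathcal{L}+0,\mathcal{B})$, so $\lambda=0$ belongs to the set
\begin{equation*}
\{\lambda\in\mathbb{R}:(\mathcal{L}+\lambda,\mathcal{B})\text{ admits a positive supersolution}\},
\end{equation*}
whose supremum defines $\lambda_1$. Hence $\lambda_1\geq 0$.

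For the ``if'' direction, I would invoke \autoref{PropositionEigenfunctionVariation} to obtain $\varphi\in C^2(\overline\Omega)$, $\varphi>0$ on $\overline\Omega$, solving $-\mathcal{L}\varphi=\lambda_1\varphi$ in $\Omega$ and $\mathcal{B}\varphi=0$ on $\partial\Omega$. Under the hypothesis $\lambda_1\geq 0$, the right-hand side $\lambda_1\varphi$ is nonnegative, so $-\mathcal{L}\varphi\geq 0$ and $\mathcal{B}\varphi=0\geq 0$. Thus $\varphi$ itself is a positive supersolution of $(\mathcal{L},\mathcal{B})$.

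There is no substantive obstacle here: the genuine work was carried out in \autoref{PropositionEigenfunctionVariation}, which produces the eigenfunction $\varphi$ via the truncation/Harnack/diagonal extraction argument. Once that proposition is in hand, the corollary follows in a couple of lines. The only point worth being mindful of is that the supremum in the definition of $\lambda_1$ might a priori not be attained, but for this corollary we do not need attainment: the forward direction uses only that $0$ lies in the set, and the backward direction uses only the eigenfunction at level $\lambda_1$ itself, not at any approximating level.
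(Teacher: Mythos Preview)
Your proof is correct and matches the paper's intent exactly: the paper presents this corollary with the phrase ``As a direct consequence'' of \autoref{PropositionEigenfunctionVariation} and gives no further argument, so your two-line derivation from the definition of $\lambda_1$ and the existence of the eigenfunction $\varphi$ is precisely what is meant.
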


\subsection{The Rayleigh-Ritz variational formula}\label{sec:self-adjoint_RayleighQuotient}
In the self-adjoint case, i.e., if $B\equiv0$ in~\eqref{DefinitionEllipticOperator}, the principal eigenvalue can be expressed through the Rayleigh-Ritz variational formula. This result is classical in bounded domains.
\begin{proposition}\label{th:RayleighFormula}
Assume $\Omega$ is smooth (possibly unbounded) and that $\mathcal{L}$ is a self-adjoint elliptic operator. For $\lambda_1$ defined in~\eqref{DefinitionLambda1Robin}, we have
\begin{equation}\label{RayleighFormulaEigen}
\la_1=\inf\limits_{\substack{\psi\in H^1(\Omega)\\                                                                                                                                                                                                                                                                                                                                                                                                          \Vert \psi\Vert_{{L}^2}=1}} \mathcal{F}(\psi)
:=\inf\limits_{\substack{\psi\in H^1(\Omega)\\                                                                                                                                                                                                                                                                                                                                                                                                          \Vert \psi\Vert_{{L}^2}=1}}\int_\Omega \vert \nabla \psi\vert_A^2-c\psi^2,
\end{equation}
with $\vert \nabla\psi\vert_A^2:=\nabla\psi\cdot A\cdot\nabla\psi.$
\end{proposition}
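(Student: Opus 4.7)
The plan is to prove the two inequalities $\mu\leq\lambda_1$ and $\mu\geq\lambda_1$ separately, where I denote by $\mu$ the right-hand side of~\eqref{RayleighFormulaEigen}. The upper bound relies on a Picone-type identity for the principal eigenfunction, and the lower bound on the truncated eigenvalues $\lambda_1^R$ already introduced in the proof of \autoref{PropositionEigenfunctionVariation}.

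For $\mu\geq\lambda_1$, let $\varphi\in C^2(\overline\Omega)$ be the positive principal eigenfunction given by \autoref{PropositionEigenfunctionVariation}, so that $-\mathcal{L}\varphi=\lambda_1\varphi$ in $\Omega$ and $\mathcal{B}\varphi=\nu\cdot A\nabla\varphi=0$ on $\partial\Omega$. For any $\psi\in H^1(\Omega)$ with bounded support, decompose $\psi=\sigma\varphi$ with $\sigma:=\psi/\varphi$, which is admissible as $\varphi>0$ on $\overline{\Omega}$. Expanding $|\nabla(\sigma\varphi)|_A^2$, rewriting the cross-term as $2\sigma\varphi\nabla\sigma\cdot A\nabla\varphi=\nabla(\sigma^2\varphi)\cdot A\nabla\varphi-\sigma^2|\nabla\varphi|_A^2$ and integrating by parts, the boundary contribution vanishes thanks to $\mathcal{B}\varphi=0$, while the interior contribution simplifies via $\nabla\cdot(A\nabla\varphi)=-(c+\lambda_1)\varphi$. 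One obtains the Picone-type identity
\begin{equation}
\mathcal{F}(\psi)=\lambda_1\|\psi\|_{L^2}^2+\int_\Omega \varphi^2|\nabla\sigma|_A^2.
\end{equation}
As the last term is nonnegative, $\mathcal{F}(\psi)\geq\lambda_1\|\psi\|_{L^2}^2$ on $H^1$ functions of bounded support. A routine cutoff argument based on $\chi_R\psi\to\psi$ in $H^1(\Omega)$ as $R\to+\infty$ (dominated convergence, using that the coefficients of $\mathcal{L}$ are bounded) extends this to all $\psi\in H^1(\Omega)$, yielding $\mu\geq\lambda_1$.

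For $\mu\leq\lambda_1$, I would use the truncated eigenvalues $\lambda_1^R$ constructed in the proof of \autoref{PropositionEigenfunctionVariation} on $\Omega_R:=\Omega\cap D_R$ with mixed boundary conditions (Neumann on $\partial\Omega\cap D_R$, Dirichlet on $\Omega\cap\partial D_R$). Since $\Omega_R$ is bounded and $\mathcal{L}$ is self-adjoint, the classical Rayleigh--Ritz formula yields
\begin{equation}
\lambda_1^R=\inf\left\{\frac{\mathcal{F}(\psi)}{\|\psi\|_{L^2}^2}\,:\,\psi\in H^1(\Omega_R)\setminus\{0\},\ \psi=0\text{ on }\Omega\cap\partial D_R\right\}.
\end{equation}
Extending any such competitor by zero produces a nontrivial element of $H^1(\Omega)$ with unchanged Rayleigh quotient, so $\mu\leq\lambda_1^R$. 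Passing to the limit $R\to+\infty$ and recalling the monotone convergence $\lambda_1^R\downarrow\lambda_1$ established in the proof of \autoref{PropositionEigenfunctionVariation}, we obtain $\mu\leq\lambda_1$, which, combined with the previous paragraph, completes the argument.

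The main delicate point is the Picone identity: the integration by parts has to be justified for $\psi\in H^1$ with bounded support (and not merely $\psi\in C^\infty_c$), and one must carefully track that the boundary integrals reduce exactly to the term annihilated by $\mathcal{B}\varphi=0$. Everything else is either classical (Rayleigh--Ritz on bounded domains) or already available in the paper via \autoref{PropositionEigenfunctionVariation}, which supplies both the eigenfunction $\varphi$ and the convergence $\lambda_1^R\to\lambda_1$.
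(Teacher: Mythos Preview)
Your proof is correct and, once one unpacks the paper's citation, follows essentially the same route: the paper merely observes that by density and dominated convergence the infimum may be restricted to $C^1_c(\overline\Omega)$, then defers to Proposition~2.2(iv) of Berestycki--Rossi (which in turn rests on Agmon and Berestycki--Nirenberg--Varadhan), whose standard argument is precisely your Picone identity with the eigenfunction $\varphi$ for the inequality $\mu\geq\lambda_1$ and the monotone convergence $\lambda_1^R\downarrow\lambda_1$ of the mixed-boundary truncations for $\mu\leq\lambda_1$. In effect you have written out explicitly what the paper leaves inside its citation.
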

Note that since the coefficient $c$ is bounded, from the dominated convergence theorem we deduce that the infimum in~\eqref{RayleighFormulaEigen} can be taken equivalently on test functions $\psi\in C^1_0(\overline{\Omega})$.
\begin{proof}
From the dominated convergence theorem and classical density results, it is equivalent to take the infimum on compactly supported smooth test functions in~\eqref{RayleighFormulaEigen}, namely
\begin{equation}
\lambda_1=\inf\limits_{\substack{\psi\in C^1_c(\overline\Omega)\\ \Vert \psi\Vert_{{L}^2}=1}}\mathcal{F}(\psi),
\end{equation}
where $C^1_c(\overline\Omega)$ is the space of continuously differentiable functions with compact support in $\overline\Omega$.
The remaining of the proof is classical and can be adapted from the proof of Proposition 2.2 (iv) in \cite{Berestycki2015b} (which itself relies on \cite{Agmon1983,Berestycki1994}).
\end{proof}

\subsection{The Maximum Principle}
We give, as a complement, some results on the link between the sign of $\lambda_1$ and the validity of the Maximum Principle in unbounded domains. The proofs of the following statements are underlying in the content of the present article, and we leave the details to a forthcoming note~\cite{Nordmann2019}.
\begin{definition}
We say that $(\mathcal{L},\mathcal{B})$ satisfies the Maximum Principle if any subsolution with finite supremum is nonpositive.
\end{definition}
For simplicity, we focus on self-adjoint operators, that is, we assume $B\equiv0$ in~\eqref{DefinitionEllipticOperator}.
We can then express $\lambda_1$ through the Rayleigh-Ritz variational formula~\eqref{RayleighFormulaEigen}.

The first result states that the (strict) sign of $\lambda_1$ is equivalent to the validity of the Maximum Principle.
\begin{proposition}\label{ANG_th:Intro_MPEigen}
Assume $\mathcal{L}$ is self-adjoint.
\begin{enumerate}
\item If $\lambda_1>0$, $(\mathcal{L},\mathcal{B})$ satisfies the Maximum Principle.
\item If $\lambda_1<0$, $(\mathcal{L},\mathcal{B})$ does not satisfy the Maximum Principle.
\end{enumerate}
\end{proposition}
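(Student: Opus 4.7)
The plan is to combine the Rayleigh--Ritz formula~\eqref{RayleighFormulaEigen} with a cut-off test function for the first part, and to use the principal eigenfunction as a counterexample for the second. For the implication $\lambda_1>0\Rightarrow$ MP, let $u$ be a subsolution with $M:=\sup_\Omega u<+\infty$; the goal is to show $u^+\equiv 0$. Following the strategy used in the proof of \autoref{LemmaStronglyStable}, I would multiply the inequality $\mathcal{L}u\geq 0$ by the nonnegative test function $\chi_R^2 u^+$, integrate by parts on $\Omega$, and exploit $\mathcal{B}u\leq 0$ together with the identity $u^+\nabla u=u^+\nabla u^+$ (which holds a.e.) to obtain
\[
\int_\Omega \chi_R^2|\nabla u^+|_A^2 + 2\int_\Omega \chi_R u^+\,\nabla\chi_R\cdot A\,\nabla u^+ - \int_\Omega c\,\chi_R^2(u^+)^2\leq 0.
\]
Plugging the admissible test function $\psi=\chi_R u^+\in H^1(\Omega)$ into~\eqref{RayleighFormulaEigen}, expanding $|\nabla(\chi_R u^+)|_A^2$, and substituting the display above then yields the recursive inequality
\[
\lambda_1\int_{\Omega_R}(u^+)^2 \leq \frac{C}{R^2}\int_{\Omega_{2R}}(u^+)^2.
\]

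Since $u^+\leq M$, the quantity $\mathcal{C}(R):=\int_{\Omega_R}(u^+)^2$ satisfies the polynomial bound $\mathcal{C}(R)\leq M^2|B_R|=O(R^n)$. Iterating the recursion, exactly as in the proof of \autoref{LemmaStronglyStable}, yields a super-exponential lower bound of the form $\mathcal{C}(2^kR)\geq 2^{k(k-1)}(\lambda_1 R^2/C)^k\mathcal{C}(R)$, which contradicts the polynomial upper bound unless $\mathcal{C}(R)=0$ for every $R>0$. Hence $u^+\equiv 0$, i.e., $u\leq 0$, which establishes the Maximum Principle.

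For the second part, assume $\lambda_1<0$. The natural candidate counterexample is the principal eigenfunction $\varphi$ from \autoref{PropositionEigenfunctionVariation}: it is positive on $\overline\Omega$, satisfies $\mathcal{B}\varphi=0$, and $-\mathcal{L}\varphi=\lambda_1\varphi<0$, so it is a strict subsolution which is not $\leq 0$. If $\sup_\Omega\varphi<+\infty$, this already violates the MP. The main obstacle is the case $\sup_\Omega\varphi=+\infty$: here I would instead use the truncated eigenpair $(\lambda_1^R,\varphi^R)$ built on $\Omega_R=\Omega\cap D_R$ inside the proof of \autoref{PropositionEigenfunctionVariation}, choosing $R$ large enough so that $\lambda_1^R<0$ (possible since $\lambda_1^R\to\lambda_1$). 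Then $\varphi^R$ is positive and bounded, a strict subsolution in $\Omega_R$, vanishes on $\Omega\cap\partial D_R$, and by Hopf's lemma satisfies $\partial_{\nu_A}\varphi^R\leq 0$ there; extending $\varphi^R$ by zero outside $\Omega_R$ should produce a bounded nontrivial subsolution of $(\mathcal{L},\mathcal{B})$ on $\Omega$ in the distributional sense, hence a counterexample to the MP. The delicate point will be making this extension argument rigorous, possibly by mollification, or by replacing $\varphi^R$ by $\max(\varphi^R-\eps,0)$ for $\eps>0$ small so as to obtain a function compactly supported inside $\Omega_R$ that is still a subsolution by direct computation on its positivity set.
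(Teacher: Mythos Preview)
The paper does not actually prove this proposition: it explicitly defers the details to a forthcoming note, saying only that ``the proofs of the following statements are underlying in the content of the present article.'' There is therefore no paper proof to compare against; the intended argument is presumably an adaptation of the cut-off iteration in \autoref{LemmaStronglyStable} together with the construction of \autoref{PropositionEigenfunctionVariation}.

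Your argument for Part~1 is correct and is exactly the adaptation the paper has in mind. Multiplying $-\mathcal L u\le 0$ by $\chi_R^2 u^+$, integrating by parts, using $\mathcal B u\le 0$ and $u^+\nabla u=u^+\nabla u^+$, and then inserting $\psi=\chi_R u^+$ into~\eqref{RayleighFormulaEigen} yields $\lambda_1\,\mathcal C(R)\le (C/R^2)\,\mathcal C(2R)$; the iteration and comparison with the polynomial bound $\mathcal C(R)=O(R^n)$ force $u^+\equiv 0$, precisely as in the proof of \autoref{LemmaStronglyStable}.

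For Part~2 your strategy is the natural one, but the gap you yourself flag is real and neither of your proposed fixes closes it. The paper's definition of subsolution requires $u\in C^2(\overline\Omega)$, whereas the zero-extension of $\varphi^R$ is only Lipschitz across $\Omega\cap\partial D_R$; the alternative $\max(\varphi^R-\eps,0)$ is still merely Lipschitz, and mollification does not commute with the variable-coefficient operator $\mathcal L$, so it need not preserve the subsolution inequality. Your construction \emph{does} produce a bounded, positive, distributional subsolution (the Hopf sign $\partial_{\nu_A}\varphi^R<0$ gives the correct sign for the surface measure on $\Omega\cap\partial D_R$), which would suffice if the Maximum Principle were phrased for $H^1_{loc}$ subsolutions---and indeed your Part~1 argument goes through verbatim at that level of regularity. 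If you insist on a $C^2$ counterexample as the paper's definition demands, an additional smoothing construction (for instance, a careful $C^2$ extension of $\varphi^{R}$ across the interface exploiting the strict negativity $\lambda_1^R\varphi^R<0$ to absorb the error) is required; this is genuine extra work that neither your sketch nor the paper supplies.
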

No general answer holds for the degenerate case $\lambda_1=0$. 
Nevertheless, the following proposition states the validity of what could be called a \emph{Critical Maximum Principle} when $\lambda_1\geq 0$ if the domain satisfies a growth condition at infinity.
\begin{proposition}
Suppose that $\mathcal{L}$ is self-adjoint and that the domain $\Omega$ satisfies~\eqref{DomaineGeneral}.
Let $\varphi$ be an eigenfunction associated with $\lambda_1$.
If $\lambda_1\geq0$, then a subsolution with finite supremum is either nonpositive or a multiple of $\varphi$.
\end{proposition}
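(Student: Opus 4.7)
The plan is to adapt the Liouville-type argument of \autoref{schrodinger2} to the setting of subsolutions. Let $u$ be a subsolution with $\sup u<+\infty$, and set $\sigma:=u/\varphi\in C^2(\overline\Omega)$. The goal is to show that $\sigma_+:=\max(\sigma,0)$ is constant; either this constant is zero (so $u\leq 0$) or $\sigma\equiv c>0$ (so $u=c\varphi$).

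First, I would derive the fundamental pointwise inequality satisfied by $\sigma$. Expanding $\mathrm{div}(A\nabla(\sigma\varphi))$, using the symmetry of $A$ that comes from self-adjointness and the equation $\mathcal{L}\varphi=-\lambda_1\varphi$, a direct computation transforms $\mathcal{L}u\geq 0$ into
\begin{equation}
\mathrm{div}(\varphi^2 A\nabla\sigma)\geq \lambda_1\sigma\varphi^2\quad\text{in }\Omega.
\end{equation}
On the boundary, $\partial_{\nu_A}u\leq 0$ combined with $\partial_{\nu_A}\varphi=0$ and $\varphi>0$ yields $\partial_{\nu_A}\sigma\leq 0$.

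Next, I would multiply by the test function $\chi_R^2\sigma_+$ (with $\chi_R$ the cut-off~\eqref{DefCutOff}), integrate over $\Omega$, and apply the divergence theorem. The boundary term $\int_{\partial\Omega}\chi_R^2\sigma_+\varphi^2\,\partial_{\nu_A}\sigma$ is nonpositive and can be discarded. Noting that the weak gradient of $\sigma_+$ coincides with $\nabla\sigma$ on the set $\{\sigma>0\}$ and vanishes otherwise, one obtains
\begin{equation}
\int_\Omega\chi_R^2\varphi^2|\nabla\sigma_+|_A^2+2\int_\Omega\chi_R\sigma_+\varphi^2 A\nabla\sigma_+\cdot\nabla\chi_R+\lambda_1\int_\Omega\chi_R^2\sigma_+^2\varphi^2\leq 0.
\end{equation}
Applying Cauchy--Schwarz to the cross term, exactly as in \autoref{schrodinger2}, and writing $I_R:=\int\chi_R^2\varphi^2|\nabla\sigma_+|_A^2$ and $J_R:=\int\sigma_+^2\varphi^2|\nabla\chi_R|_A^2=\int u_+^2|\nabla\chi_R|_A^2$, one gets
\begin{equation}
I_R+\lambda_1\int_\Omega\chi_R^2\sigma_+^2\varphi^2\leq 2\sqrt{I_R}\sqrt{J_R}.
\end{equation}
Since $u_+\leq\sup u<+\infty$ and $|\nabla\chi_R|\leq 2/R$ is supported in $\{R\leq |x|\leq 2R\}$, hypothesis~\eqref{DomaineGeneral} yields a uniform bound on $J_R$, hence on $I_R$, and therefore $\int_\Omega\varphi^2|\nabla\sigma_+|_A^2<+\infty$ by monotone convergence.

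Refining once more: restricting $I_R$ on the right-hand side to the annulus where $\nabla\chi_R\neq 0$, the integrability just obtained forces that restricted integral to vanish as $R\to+\infty$. Passing to the limit yields
\begin{equation}
\int_\Omega\varphi^2|\nabla\sigma_+|_A^2+\lambda_1\int_\Omega\sigma_+^2\varphi^2\leq 0,
\end{equation}
so $\nabla\sigma_+\equiv 0$ and $\lambda_1\sigma_+\equiv 0$. Assuming $\Omega$ connected, $\sigma_+\equiv c\geq 0$ is constant. If $c=0$ (which is forced if $\lambda_1>0$), then $u\leq 0$. Otherwise $c>0$ forces $\lambda_1=0$ and $\sigma\equiv c$, which gives $u=c\varphi$, as claimed.

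The main obstacle I anticipate is the rigorous handling of $\sigma_+$ in the integration by parts, since $\sigma_+$ is only Lipschitz in general, not $C^2$. A standard regularization (smoothing the positive part $t\mapsto t_+$, or replacing $\sigma_+$ by $(\sigma-\eta)_+$ and letting $\eta\downarrow 0$) will legitimize the computation. All other ingredients are direct analogues of the proof of \autoref{schrodinger2} applied to the localized object $\sigma_+$; the only new twist is that, since we no longer have an equation but only an inequality, the truncation by the positive part is what allows the divergence-form manipulation to remain under control.
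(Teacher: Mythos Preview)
The paper does not actually prove this proposition; it states that ``the proofs of the following statements are underlying in the content of the present article'' and defers details to the forthcoming note~\cite{Nordmann2019}. Your proposal is precisely the natural adaptation of the Liouville argument of \autoref{schrodinger2} that the paper is pointing to: replacing the quotient $\sigma=v/\varphi$ by its positive part $\sigma_+$ is exactly the device needed to handle a one-sided differential inequality rather than an equation, and everything else (the divergence identity $\mathrm{div}(\varphi^2A\nabla\sigma)\geq\lambda_1\sigma\varphi^2$, the cut-off estimate, the two-pass Cauchy--Schwarz, and the use of~\eqref{DomaineGeneral} to bound $J_R=\int u_+^2|\nabla\chi_R|_A^2$) follows \autoref{schrodinger2} verbatim. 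The argument is correct; note also that since $\chi_R^2\sigma_+$ is Lipschitz with compact support and $\varphi^2A\nabla\sigma$ is $C^1$, the integration by parts is already rigorous without the regularization you flag as a potential obstacle.
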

A first consequence of this result is the simplicity of $\lambda_1$ if it admits a bounded eigenfunction.
\begin{corollary}
Under the same conditions, if $\lambda_1$ admits a bounded eigenfunction, then $\lambda_1$ is simple.
\end{corollary}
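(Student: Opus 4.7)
The plan is to deduce this directly from the Critical Maximum Principle of the previous proposition, by applying it simultaneously to a given bounded eigenfunction and to its opposite.

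Let $\varphi$ be the positive principal eigenfunction provided by \autoref{PropositionEigenfunctionVariation}, and let $\psi$ be any bounded eigenfunction associated with $\lambda_1$. To exploit the Critical Maximum Principle, which is stated for the operator $(\mathcal{L},\mathcal{B})$ with $\lambda_1\geq 0$, I would first shift the operator and consider $\widetilde{\mathcal{L}}:=\mathcal{L}+\lambda_1$. Its generalized principal eigenvalue is $\widetilde\lambda_1=0$, and the same $\varphi$ is a positive principal eigenfunction for $(\widetilde{\mathcal{L}},\mathcal{B})$. Moreover, $\psi$ satisfies
\begin{equation}
-\widetilde{\mathcal{L}}\psi=0\ \text{in }\Omega,\qquad \mathcal{B}\psi=0\ \text{on }\D\Omega,
\end{equation}
so both $\psi$ and $-\psi$ are (in particular) subsolutions of $(\widetilde{\mathcal{L}},\mathcal{B})$, each with finite supremum thanks to the boundedness hypothesis.

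The Critical Maximum Principle applied to the operator $(\widetilde{\mathcal{L}},\mathcal{B})$ (whose principal eigenvalue is $0\geq 0$, and for which $\Omega$ still satisfies~\eqref{DomaineGeneral}) then yields a dichotomy for each of $\psi$ and $-\psi$: each must be either nonpositive or a scalar multiple of $\varphi$. Combining the two possibilities produces four cases, three of which directly give $\psi=c\varphi$ for some $c\in\R$; the remaining case is $\psi\leq 0$ and $-\psi\leq 0$, hence $\psi\equiv 0$, which is the trivial multiple $c=0$. In every case, $\psi$ is proportional to $\varphi$, so the eigenspace of $\lambda_1$ within the class of bounded functions is one-dimensional, which is the required simplicity.

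There is no real obstacle beyond the bookkeeping: the only conceptual point is that an eigenfunction is automatically a subsolution of the shifted operator, and that the Critical Maximum Principle is strong enough to handle both signs simultaneously. Note that, implicitly, the existence of a bounded eigenfunction forces $\varphi$ itself to be bounded (up to scaling), since the argument shows any bounded eigenfunction is a multiple of $\varphi$; this is consistent with the statement but is not needed to conclude simplicity of the eigenvalue.
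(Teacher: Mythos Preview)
The paper does not give an explicit proof of this corollary; it states that the arguments for this and the neighbouring results are ``underlying in the content of the present article'' and are deferred to a forthcoming note. Your deduction from the Critical Maximum Principle is exactly the intended route and is correct: the shift to $\widetilde{\mathcal{L}}=\mathcal{L}+\lambda_1$ (so that $\widetilde\lambda_1=0$) is the right move, and applying the dichotomy of the previous proposition to both $\psi$ and $-\psi$ forces any bounded eigenfunction to be a scalar multiple of $\varphi$.

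One point worth flagging. The paper's clarification of simplicity, given immediately after the corollary, reads: ``if $\psi\in C^2(\overline\Omega)$ is a solution of~\eqref{EquationGeneralizedEigenfunction}, then it is a scalar multiple of $\varphi$'', with no boundedness hypothesis on $\psi$. Your argument, going through the Critical Maximum Principle, needs $\psi$ to have finite supremum, and you (correctly) conclude only that the eigenspace \emph{within the class of bounded functions} is one-dimensional. This is not a gap in your reasoning but a genuine restriction on what can be true: take $\mathcal{L}=\Delta$ on $\Omega=\R$, which is self-adjoint and satisfies~\eqref{DomaineGeneral}; then $\lambda_1=0$ with bounded eigenfunction $\varphi\equiv 1$, yet $\psi(x)=x$ is an unbounded $C^2$ eigenfunction not proportional to $\varphi$. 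So the corollary can only hold in the sense you establish, and your reading is the right one.
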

The \emph{simplicity} of $\lambda_1$ should be understood as follows: if $\psi\in C^2(\overline\Omega)$ is a solution of~\eqref{EquationGeneralizedEigenfunction}, then it is a scalar multiple of $\varphi$.

We also give the following necessary and sufficient condition for the validity of the Maximum Principle in the critical case $\lambda_1=0$.
\begin{corollary}
Under the same conditions, further assume $\lambda_1=0$, and let $\varphi$ be an associated eigenfunction. Then, $(\mathcal{L},\mathcal{B})$ satisfies the Maximum Principle if and only if $\varphi$ is not bounded.
\end{corollary}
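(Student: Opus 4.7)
The plan is to deduce both implications directly from the preceding proposition (the critical Maximum Principle), which asserts that under the present hypotheses every subsolution with finite supremum is either nonpositive or a scalar multiple of the principal eigenfunction $\varphi$. No extra machinery beyond that proposition and the strict positivity of $\varphi$ is needed.

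For the necessity (``only if''), I would argue by contraposition: suppose $\varphi$ is bounded. Since $\lambda_1=0$, the eigenfunction equation $-\mathcal{L}\varphi=\lambda_1\varphi=0$ with $\mathcal{B}\varphi=0$ shows that $\varphi$ is, in particular, a subsolution of $(\mathcal{L},\mathcal{B})$, and by assumption it has finite supremum. By \autoref{PropositionEigenfunctionVariation}, $\varphi>0$ on $\overline{\Omega}$, so $\varphi$ is not nonpositive. Thus $\varphi$ itself witnesses the failure of the Maximum Principle.

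For the sufficiency (``if''), suppose $\varphi$ is unbounded, and let $u$ be an arbitrary subsolution with $\sup u<+\infty$. Applying the preceding proposition, either $u\leq 0$ on $\overline{\Omega}$, in which case there is nothing to prove, or $u=c\varphi$ for some $c\in\R$. In the latter case I would discuss the sign of $c$: since $\varphi>0$ is unbounded above, $c>0$ would force $\sup u=c\sup\varphi=+\infty$, contradicting our assumption; hence $c\leq 0$ and $u=c\varphi\leq 0$. In every case $u\leq 0$, so the Maximum Principle holds.

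The argument is short and there is no real obstacle, since the corollary is essentially a direct book-keeping after the previous proposition. The only genuine subtlety is to remember that the definition of the Maximum Principle used here applies to subsolutions with finite supremum (not to bounded subsolutions), which is precisely why the sign discussion of the scalar $c$ can be closed using only the positivity and unboundedness \emph{above} of $\varphi$, without any further information on $\inf\varphi$.
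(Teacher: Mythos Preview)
Your argument is correct. Both directions are handled cleanly: the contrapositive of the ``only if'' direction is immediate since for $\lambda_1=0$ the eigenfunction $\varphi$ is itself a positive (sub)solution with finite supremum, and for the ``if'' direction the sign discussion on the scalar $c$ is exactly right --- the key point being that $\varphi>0$ is unbounded above, so $c>0$ is excluded by the finite-supremum hypothesis.

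As for the comparison with the paper: note that the paper does not actually give a proof of this corollary. In the appendix it states that ``the proofs of the following statements are underlying in the content of the present article, and we leave the details to a forthcoming note''. Your argument is precisely the kind of short deduction from the preceding Critical Maximum Principle that the paper has in mind, so there is nothing to contrast.
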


%
%

\section{On the different definitions of stability}\label{Annexe:Stability}
When considering stability from a dynamical point of view, one can come up with the two following definitions. 
\begin{definition}\label{DefinitionDynmicallyStable}
A solution $u$ of \eqref{ANG_Intro_EquationSemilineaire} is said to be dynamically stable if, given any $\eps>0$, there exists $\delta_0>0$ such that for any $v_0(x)$ with $\Vert v_0-u\Vert_{L^\infty}\leq\delta_0$, we have
\begin{equation}
\Vert v(t,\cdot) -u(\cdot)\Vert_{L^\infty}\leq\eps,\quad \forall t>0,
\end{equation}
where $v(t,x)$ is the solution of the evolution problem
\begin{equation}\label{DynamicSystem}
\left\{\begin{aligned}
&\D_tv(t,x)-\Delta v(t,x)=f(v(t,x)) &\forall x\in\Omega,\ \forall t>0,\\
&\D_\nu v(t,x)=0 &\forall x\in\D\Omega,\ \forall t>0,\\
&v(t=0,x)=v_0(x) &\forall x\in\Omega.
\end{aligned}\right.
\end{equation}
\end{definition}

\begin{definition}\label{DefinitionAsymptoticallyStable}
A solution $u$ of \eqref{ANG_Intro_EquationSemilineaire} is said to be asymptotically stable if there exists $\delta_0>0$ such that for any $v_0(x)$ with $\Vert v_0-u\Vert_{L^\infty}\leq\delta_0$, we have
\begin{equation}
\Vert v(t,\cdot) -u(\cdot)\Vert_{L^\infty}\to 0,\quad\text{when }t\to+\infty,
\end{equation}
where $v(t,x)$ is the solution of \eqref{DynamicSystem}.
\end{definition}
The following proposition clarifies the hierarchy of the different definitions of stability.

\begin{proposition} \label{PropositionStability}
Let $u$ be a solution of \eqref{ANG_Intro_EquationSemilineaire} and $\lambda_1$ from \eqref{DefLambda}. The following implications hold
\begin{equation}
\text{ $u$ asymptotically stable}\Rightarrow \text{ $u$ dynamically stable}\Rightarrow \lambda_1\geq 0.
\end{equation}
\end{proposition}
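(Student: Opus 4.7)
The plan follows the two implications in order.

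\textbf{First implication (asymptotic $\Rightarrow$ dynamical stability).} I would combine the attraction property with continuous dependence on initial data for the parabolic semiflow associated with \eqref{DynamicSystem}. For fixed $\epsilon>0$, standard parabolic Schauder estimates for $v-u$ give, on every finite time interval $[0,T]$, a modulus $\delta(T)>0$ such that $\Vert v_0-u\Vert_\infty\le \delta(T)$ implies $\sup_{t\in[0,T]}\Vert v(t,\cdot)-u\Vert_\infty\le\epsilon$. To cover all $t>0$, I would argue by contradiction: a sequence $v_0^{(n)}\to u$ with excursion times $t_n$ satisfying $\Vert v(t_n,\cdot;v_0^{(n)})-u\Vert_\infty\ge\epsilon$ must have $t_n\to+\infty$ (by the finite-time bound above), but eventually $v_0^{(n)}$ lies in the basin $B_{\delta_0}$, and a uniform compactness argument (orbits are precompact in $L^\infty_{loc}$ by parabolic regularity) together with the asymptotic attraction would produce the required contradiction.

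\textbf{Second implication (dynamical stability $\Rightarrow \lambda_1\ge 0$).} I would prove the contrapositive. Assume $\lambda_1<0$ and construct an explicit perturbation witnessing the failure of dynamical stability. The natural candidate is the positive principal eigenfunction $\varphi$ provided by \autoref{PropositionEigenfunctionVariation}, which satisfies $-\Delta \varphi - f'(u)\varphi=\lambda_1\varphi$ in $\Omega$ and $\D_\nu\varphi=0$ on $\D\Omega$. Supposing first that $\varphi\in L^\infty(\Omega)$, I would take $v_0:=u+\delta\varphi$ for small $\delta>0$, and exhibit the ansatz
\begin{equation*}
\underline v(t,x):=u(x)+\delta\, e^{-\lambda_1 t/2}\varphi(x)
\end{equation*}
as a sub-solution of \eqref{DynamicSystem} on a time interval $[0,t^\star]$, with $t^\star$ of order $\vert\ln\delta\vert$. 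Indeed, a Taylor expansion of $f$ around $u$ together with the eigenvalue equation yields
\begin{equation*}
\D_t \underline v - \Delta \underline v - f(\underline v) = \delta\, e^{-\lambda_1 t/2}\varphi\left(\tfrac{\lambda_1}{2} + O\bigl(\delta\, e^{-\lambda_1 t/2}\varphi\bigr)\right),
\end{equation*}
which stays non-positive as long as the amplitude $\delta e^{-\lambda_1 t/2}\Vert\varphi\Vert_\infty$ remains below a threshold depending on $\vert\lambda_1\vert$ and $\Vert f''\Vert_\infty$. The parabolic comparison principle then forces $v(t,\cdot)\ge \underline v(t,\cdot)$ on $[0,t^\star]$, so that $\Vert v(t^\star,\cdot)-u\Vert_\infty$ reaches a fixed positive threshold independent of $\delta$, contradicting dynamical stability.

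\textbf{Main obstacle.} The delicate point lies in the unbounded-domain setting, where $\varphi$ may fail to be in $L^\infty(\Omega)$ and $u+\delta\varphi$ would not be an admissible $L^\infty$-small perturbation. I would handle this by a truncation: reproduce the construction from the proof of \autoref{PropositionEigenfunctionVariation} on a large ball $\Omega\cap B_{2R}$, with Neumann conditions on $\D\Omega$ and Dirichlet conditions on $\Omega\cap\D B_{2R}$, obtaining a bounded positive eigenfunction $\varphi^R$ with eigenvalue $\lambda_1^R$. Since $\lambda_1^R\searrow\lambda_1<0$ as $R\to+\infty$, one has $\lambda_1^R<0$ for $R$ large, and the sub-solution construction above applies verbatim to $\varphi^R$ extended by zero (after a mild smoothing near $\D B_{2R}$ to restore the admissibility of the comparison principle). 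The resulting perturbation $u+\delta\varphi^R$ is compactly supported, bounded, and drives the orbit away from $u$ by a fixed amount within time $O(\vert\ln\delta\vert)$, completing the contradiction.
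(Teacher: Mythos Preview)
Your argument for the second implication is essentially the paper's: both assume $\lambda_1<0$, pass to a bounded truncation $\tilde\Omega\subset\Omega$ with mixed Neumann/Dirichlet conditions so that the truncated principal eigenvalue is still negative, and use the corresponding eigenfunction to build an exponentially growing subsolution for $h:=v-u$. One technical difference: you extend $\varphi^R$ by zero to all of $\Omega$ and smooth near $\partial B_{2R}$, which forces you to justify that the smoothed function is still a subsolution there. The paper sidesteps this by running the parabolic comparison \emph{on $\tilde\Omega$ itself}: since the initial perturbation $\tilde\varphi\ge0$ is not identically zero, the strong maximum principle gives $h(t,\cdot)>0$ in $\Omega$, in particular on $\partial\tilde\Omega\setminus\partial\Omega$, so $\tilde h(t,x):=e^{-(\tilde\lambda_1+\eta_\delta)t}\tilde\varphi(x)$ (which vanishes there) is dominated by $h$ on that part of the boundary and the comparison applies without any smoothing.

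For the first implication the paper simply writes ``trivial''. Your compactness sketch, however, has a genuine gap: precompactness of orbits in $L^\infty_{loc}$ does not control the $L^\infty$ norm on an unbounded domain, and pointwise attraction of each $v(\cdot\,;v_0^{(n)})$ to $u$ is not uniform in $n$, so the contradiction does not close (Vinograd-type obstructions). In this scalar parabolic setting the clean argument uses the order structure: for $\epsilon>0$, the two solutions $v^\pm$ with constant initial data $u\pm\delta_0$ lie in the basin and hence satisfy $\Vert v^\pm(t,\cdot)-u\Vert_\infty\le\epsilon$ for $t\ge T$; any $v_0$ with $\Vert v_0-u\Vert_\infty\le\delta_0$ is sandwiched by comparison, giving the bound for $t\ge T$, and continuous dependence on $[0,T]$ (applied only to the two reference trajectories with data $u\pm\delta$ for small $\delta$) handles the remaining finite interval. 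This is presumably what the paper has in mind.
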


\begin{proof}
The first implication is trivial. 
Let us show the second implication by contradiction: assume $\lambda_1<0$ and that $u$ is dynamically stable. 
For $R>0$, define the truncated domain $\Omega_R:=\Omega\cap\{\vert x\vert <R\}$ and consider the following mixed-boundary eigenvalue problem: find $\lambda_{1,R}\in\R$ and $\varphi_R\in C^2(\Omega_R)$ satisfying
\begin{equation}\label{MixedBoundaryEigenproblem}
\left\{\begin{aligned}
&-\Delta\varphi_R-f'(u)\varphi_R=\lambda_{1,R} \varphi_R&&\text{in }\Omega_R,\\
&\D_\nu\varphi_R=0&&\text{on }\D\Omega\cap\{\vert x\vert <R\},\\
&\varphi_R=0&&\text{on }\Omega\cap \{\vert x\vert =R\}.
\end{aligned}\right.
\end{equation}
From a recent result of Rossi~\cite[Theorem~2.1]{Rossi2020}, we know that, for almost every $R>0$, the eigenproblem~\eqref{MixedBoundaryEigenproblem} admits a unique eigenpair $(\lambda_{1,R},\varphi_R)$ such that $\varphi_R>0$ on $\overline{\Omega}\cap\{\vert x\vert <R\}$. Moreover, $R\mapsto \lambda_{1,R}$ is strictly decreasing and $\lim_{R\to+\infty}\lambda_{1,R}=\lambda_1$.
Let us fix $R>0$ large enough such that $\lambda_{1,R}<0$. We also choose the normalization $\Vert \varphi_R\Vert_{L^\infty}=1$.

Let us consider a parameter $\eps>0$ small enough such that
\begin{equation}\label{DefinitionEtaStable}
\eta_\eps:=\sup\limits_{\substack{\tilde u\in[\inf_{\Omega_R} u,\sup_{\Omega_R} u]\\ \vert h\vert\leq \eps}} \left\vert f'(\tilde u)-\frac{f(\tilde u+h)-f(\tilde u)}{h}\right\vert< -\lambda_{1,R},
\end{equation}
and $\delta_0$ given by \autoref{DefinitionDynmicallyStable}. Consider $v$ the solution of the parabolic equation~\eqref{DynamicSystem} with initial datum $v_0:=u+\delta_0\varphi_R$, and set $h(t,x)=v(t,x)-u(x)$. On the one hand, since $\Vert v_0-u\Vert_{L^\infty}\leq\delta_0$, the stability assumption implies $\Vert h(t,\cdot)\Vert_{L^\infty}\leq\eps$ for all time $t\geq0$. On the other hand, $h$ satisfies
\begin{equation*}
\left\{\begin{aligned}
&\D_t h(t,x)- \Delta h(t,x)\geq \left(f'(u(x))-\eta_\eps\right)h(t,x) &&\text{in }\Omega_R,\\
&\D_\nu h=0&&\text{on }\D\Omega\cap\{\vert x\vert <R\},\\
&h\geq 0&&\text{on }\Omega\cap\{\vert x\vert =R\}
\end{aligned}\right.
\end{equation*}
From the parabolic comparison principle, we infer $h(t,x)\geq \tilde h(t,x):= e^{-(\lambda_1+\eta_\eps)t}\delta_0\varphi_R(x)$ for all $t\geq0$ and $x\in\Omega_R$. Using that $\lambda_1+\eta_\delta<0$, we deduce that $\Vert h(t,\cdot)\Vert_{L^\infty}$ diverges to $+\infty$ when $t$ becomes large: contradiction.
\end{proof}
\begin{remark}
Note that, in the proof, the perturbation $\delta_0 \varphi_R$ has a compact support in $\overline\Omega$ and an arbitrarily small $L^\infty$ norm. Thus, if $\lambda_1<0$ then~\eqref{DynamicSystem} drives $u+h$ away from $u$ for \emph{any} $h$ which is positive or negative on $\Omega_R$ if $R$ is large enough. 
\end{remark}

One can ask whether the following implication holds:
\begin{equation}\label{QuestionStrongStability}
\lambda_1 >0 \Rightarrow u\text{ asymptotically stable}.
\end{equation}
This implication is classical when the domain is bounded (see Proposition 1.4.1 in \cite{Dupaigne2011}), but it is not clear whether it extends to unbounded domains.
\begin{question}Does \eqref{QuestionStrongStability} hold in unbounded domains ?
\end{question}
We think that, in general, the answer is negative. Nevertheless, as a consequence of the results of the present paper, we give a positive answer for unbounded \emph{convex} domains.

\begin{proposition}\label{CorollaryFull}
Let $\Omega\subset\R^n$ be a smooth convex domain (possibly unbounded) and $u$ be a solution of \eqref{ANG_Intro_EquationSemilineaire}. Then
\begin{equation}
\lambda_1>0\Rightarrow\text{ $u$ asymptotically stable}\Rightarrow \text{ $u$ dynamically stable}\Rightarrow \lambda_1\geq 0.
\end{equation}
\end{proposition}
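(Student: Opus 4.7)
The plan is to prove the first implication, $\lambda_1>0\Rightarrow u$ asymptotically stable, since the two rightmost implications are already established in~\autoref{PropositionStability}. The approach has two stages: first, reduce to the case of a constant solution using the non-existence of non-degenerate patterns in convex domains; second, prove asymptotic stability of such a constant equilibrium via explicit spatially-constant sub- and supersolutions.

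For the first stage, I would apply~\autoref{thmStableNonDegenerate}: since $\lambda_1>0$, $u$ is a non-degenerate stable solution in a convex (possibly unbounded) domain, hence $u\equiv z$ for some root $z$ of $f$. Evaluating the Rayleigh quotient~\eqref{DefLambda} at constant (or, on an unbounded domain, slowly-varying compactly-supported) test functions shows $\lambda_1=-f'(z)$, so the hypothesis $\lambda_1>0$ translates into $f'(z)<0$.

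For the second stage, the idea is a classical perturbative argument reducing the PDE near $u\equiv z$ to the scalar ODE $\dot w=f'(z)w$. Fix $\mu\in(f'(z),0)$ and consider the spatially-constant functions $V_\pm(t,x):=z\pm\delta e^{\mu t}$. Using the mean value theorem and the continuity of $f'$ at $z$, one verifies that for $\delta$ sufficiently small $V_+$ is a supersolution of~\eqref{DynamicSystem} and $V_-$ a subsolution; both trivially satisfy the Neumann condition since they are constant in $x$. For any initial datum $v_0$ with $\Vert v_0-z\Vert_{L^\infty}\leq\delta$, the parabolic comparison principle then sandwiches $v(t,x)$ between $V_-(t)$ and $V_+(t)$, yielding $\Vert v(t,\cdot)-z\Vert_{L^\infty}\leq\delta e^{\mu t}\to 0$, which is asymptotic stability (with, as a bonus, an exponential rate).

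The step requiring the most care is the parabolic comparison principle on an unbounded domain with Neumann boundary conditions. This is classical for bounded sub/supersolutions (for instance via a Phragm\'en--Lindel\"of or domain-exhaustion argument), and since $V_\pm$ and $v$ are all uniformly bounded, no growth-at-infinity hypothesis is needed. The conceptually deep step---ruling out non-trivial patterns when $\lambda_1>0$---is entirely absorbed by~\autoref{thmStableNonDegenerate}, which is precisely where the convexity of $\Omega$ is used.
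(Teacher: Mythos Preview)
Your proposal is correct and follows essentially the same approach as the paper: reduce to a constant solution via \autoref{thmStableNonDegenerate}, note $\lambda_1=-f'(z)>0$, and then trap the evolution between spatially-constant exponentially-decaying barriers using the parabolic comparison principle. The paper phrases the last step in terms of the difference $h=v-u$ and the modulus $\eta_\delta$ from~\eqref{DefinitionEtaStable} rather than your explicit $V_\pm=z\pm\delta e^{\mu t}$, but this is only a cosmetic difference.
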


\begin{proof}
From \autoref{PropositionStability}, we only have to show the first implication. Assume $\lambda_1>0$.
We deduce from \autoref{thmStableNonDegenerate} that $u$ is constant. Thus $\lambda_1=-f'(u)$ and $\varphi$ is constant. We choose $\eps$ small enough such that $\eta_{\eps}\in(0,\frac{\lambda_1}{2})$ with $\eta_\eps$ defined in \eqref{DefinitionEtaStable}. Let $v_0$ be as in \autoref{DefinitionAsymptoticallyStable} (we use the same notations).
We set $T:=\sup\{t>0:\Vert h(t,\cdot)\Vert_{L^\infty}\leq \eps\}$. By continuity and the choice of $v_0$, we know that $T>0$. We have
\begin{equation}
\left\{\begin{aligned}
&\D_t h(t,x)-\Delta h(t,x)\leq \left(f'(u)+\eta_\eps\right)h(t,x) &\forall t\in(0,T),\ x\in\Omega,\\
&\D_\nu h(t,x)=0 &\forall t\in(0,T),\ x\in\D\Omega.
\end{aligned}\right.
\end{equation}
From $f'(u)+\eta_\eps\leq -\frac{\lambda_1}{2}$ and the parabolic comparison principle, we obtain $\Vert h(t,\cdot)\Vert_{L^\infty}\leq \eps e^{-\frac{\lambda_1}{2}t}$, for all $t\in(0,T)$. We deduce $T=+\infty$ and $\Vert h(t,\cdot)\Vert_{L^\infty}\to0$ when $t\to+\infty$, thus $u$ is asymptotically stable.
\end{proof}

\section{Isolation of stable solutions}\label{sec:Isolation}
We give a brief discussion on the isolation of stable solutions of~\eqref{ANG_Intro_EquationSemilineaire} in the set of all solutions.
Note that this question is crucial in the proof of \autoref{th:AsymptoticNonDegenerate} and \autoref{th:AsymptoticDegenerate}, since the key point is to show that $\Sigma_u$ is a discrete set.
When considering the $L^\infty$ topology, we have the following.
\begin{lemma}\label{LemmaIsolated}
Let $\Omega\subset\R^n$ be a smooth domain (possibly unbounded), and denote $S$ the set of solutions of \eqref{ANG_Intro_EquationSemilineaire} in $\Omega$. Let $u\in S$ be either stable non-degenerate, or unstable non-degenerate (i.e. $\lambda_1<0$). Then, $u$ is isolated in $S$ for the $L^\infty(\Omega)$ topology.
\end{lemma}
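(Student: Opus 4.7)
I would argue by contradiction via a linearization-and-blow-up scheme. Suppose $u_n\in S$ with $u_n\to u$ in $L^\infty(\Omega)$ and $u_n\neq u$; set $w_n:=u_n-u$ and $v_n:=w_n/\Vert w_n\Vert_{L^\infty}$, so that $\Vert v_n\Vert_{L^\infty}=1$. Writing $f(u_n)-f(u)=c_n(x)\,w_n$ with $c_n(x):=\int_0^1 f'(u(x)+t w_n(x))\,dt$, the function $v_n$ satisfies
\begin{equation*}
-\Delta v_n=c_n(x)\,v_n\ \text{in }\Omega,\qquad \partial_\nu v_n=0\ \text{on }\partial\Omega,
\end{equation*}
and $c_n\to f'(u)$ uniformly on $\overline\Omega$ since $f\in C^1$ and $u_n\to u$ in $L^\infty$. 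Global Schauder estimates yield uniform $C^{2,\alpha}_{loc}(\overline\Omega)$ bounds on $v_n$, so along a subsequence $v_n\to v$ in $C^2_{loc}(\overline\Omega)$, where $v$ is bounded, $\Vert v\Vert_{L^\infty}\le 1$, and $v$ solves the linearized problem $-\Delta v-f'(u)v=0$ in $\Omega$ with $\partial_\nu v=0$ on $\partial\Omega$.

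Consider the stable non-degenerate case $\lambda_1>0$. The function $v$ meets the hypotheses of \autoref{LemmaStronglyStable}: $v(\Delta v+f'(u)v)=0\ge 0$ holds with equality, and the boundary integral in \eqref{BorderCondition2} vanishes because $\partial_\nu v^2=2v\,\partial_\nu v=0$. Hence $v\equiv 0$, so $v_n\to 0$ locally uniformly. Choosing $x_n\in\overline\Omega$ with $|v_n(x_n)|\ge 1/2$ therefore forces $|x_n|\to\infty$. I then translate and repeat: define $\tilde v_n(\cdot):=v_n(\cdot+x_n)$ and $\tilde u_n(\cdot):=u(\cdot+x_n)$ on $\tilde\Omega_n:=\Omega-x_n$. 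Assuming $\Omega$ is uniformly smooth, a subsequence of $(\tilde\Omega_n,\tilde u_n,\tilde v_n)$ converges to a triple $(\Omega_\infty,u_\infty,v_\infty)$, with $u_\infty$ a bounded solution of \eqref{ANG_Intro_EquationSemilineaire} on $\Omega_\infty$, $v_\infty$ a bounded solution of $-\Delta v_\infty-f'(u_\infty)v_\infty=0$ in $\Omega_\infty$ with Neumann condition, and $|v_\infty(0)|\ge 1/2$. An adaptation of \autoref{lambda_infty} to arbitrary translation vectors gives $\lambda_1(u_\infty,\Omega_\infty)\ge \lambda_1(u,\Omega)>0$, and a second application of \autoref{LemmaStronglyStable} to $v_\infty$ on $\Omega_\infty$ yields $v_\infty\equiv 0$, contradicting $|v_\infty(0)|\ge 1/2$. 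The unstable case $\lambda_1<0$ is treated identically, with the role of \autoref{LemmaStronglyStable} played by the non-degeneracy of $u$ interpreted as the absence of bounded non-trivial solutions of the linearized operator at $u$ (equivalently, $0$ is not in the spectrum), a property inherited by $u_\infty$ through the blow-up.

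The main obstacle is the possibility that $v_n$ concentrates at spatial infinity: this is precisely what necessitates the translation-and-extraction step together with a translation-invariant extension of \autoref{lambda_infty}, and it is also the point at which uniform smoothness of $\Omega$ becomes essential in order to extract a limiting domain $\Omega_\infty$. A secondary bookkeeping step is to verify that the $C^2_{loc}$ convergence of $\tilde v_n$ and $\tilde u_n$ is compatible with the convergence of the boundaries $\partial\tilde\Omega_n$, which follows from the uniform $C^{2,\alpha}$ regularity via an Arzel\`a--Ascoli argument in local boundary charts.
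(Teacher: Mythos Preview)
Your compactness/blow-up scheme for the case $\lambda_1>0$ is quite different from the paper's argument, which is a direct variational computation with cut-offs and never translates or extracts a limiting domain. The paper takes the (unnormalized) differences $v_k:=u_{k+1}-u_k$, multiplies the equation $-\Delta v_k=c_k(x)v_k$ by $\chi_R^2 v_k$, integrates by parts, and arrives at the identity
\[
\mathcal{F}\!\left(\frac{\chi_R v_k}{\Vert \chi_R v_k\Vert_{L^2}}\right)=\frac{\int_\Omega \chi_R^2 v_k^2\,(c_k-f'(u))}{\int_\Omega \chi_R^2 v_k^2}+\frac{\int_\Omega |\nabla\chi_R|^2 v_k^2}{\int_\Omega \chi_R^2 v_k^2}.
\]
The first term is bounded by $\Vert c_k-f'(u)\Vert_{L^\infty}\to 0$ uniformly in $R$; the second is controlled by the growth argument $\liminf_R\alpha_{k,R}\le 0$ from the proof of \autoref{LemmaStronglyStable}. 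This gives $\lambda_1\le 0$ directly. Compared with your route, the paper's is shorter and works for merely smooth $\Omega$: as you yourself note, your translation step forces uniform smoothness, a hypothesis the lemma does not make.

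There is a genuine gap in your treatment of the unstable case $\lambda_1<0$. You assert that non-degeneracy in the sense $\lambda_1<0$ is the same as ``$0$ is not in the spectrum'', hence the same as ``the linearized equation has no bounded nontrivial solution''. This is false: $\lambda_1$ is the \emph{bottom} of the spectrum, and $\lambda_1<0$ does not prevent $0$ from lying higher up. At the level of Schr\"odinger operators, take $-\partial_{xx}-c$ on $\R$ with a constant $c>0$: then $\lambda_1=-c<0$, yet $x\mapsto\cos(\sqrt{c}\,x)$ is a bounded nontrivial solution of $-v''-cv=0$. Moreover, even if such a Liouville claim held at $u$, your inheritance step would fail: the inequality underlying \autoref{lambda_infty} only yields $\lambda_1(u_\infty,\Omega_\infty)\ge \lambda_1(u,\Omega)$, so strict negativity is not preserved under the blow-up and nothing excludes $\lambda_1(u_\infty,\Omega_\infty)=0$. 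Your argument therefore does not close in the unstable case.
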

This result is essentially classical, at least for bounded domains. We give a proof at the end of the section.

Note that, in general, this result fails for $L^\infty_{loc}$ topology. For example, consider the Allen-Cahn equation in $\R$,
$
        -u''=u(1-u^2).
$
This equation admits an explicit solution $u : x\mapsto \tanh\frac{x}{\sqrt{2}}$
which is stable (degenerate). On the one hand, the family of the translated solutions $u_a(\cdot)=u(\cdot-a)$ converges to $0$ when $a\to+\infty$ in the $L^\infty_{loc}$ topology. On the other hand, $0$ is a stable non-degenerate solution (because $f'(0)<0$). 

One could argue that the above counterexample relies on the fact that the nonlinearity is balanced, that is, $\int_0^1 f=0$ with $f(u):=u(1-u)(u-\nicefrac{1}{2})$.
However, one can build similar counterexamples for unbalanced nonlinearities by considering a ground state (which has been proved to exist in most cases, see, e.g., \cite{Berestyckil}).

\paragraph*{}
In an attempt to extend the results of section~\ref{sec:AsymptoticFormulation}, we address the following question.
\begin{question}
Is $\Sigma_u$ from~\eqref{DefinitionOmegaLimitSet} always a singleton when $u$ is stable non-degenerate?
\end{question}
We think the answer is negative; yet, we are not able to provide a counterexample.

\begin{proof}[\autoref{LemmaIsolated}]
Assume that there exists $u_k\in S$, $u_{k+1}\not\equiv u_k$, a sequence which converges to $u$, and let us show that $\lambda_1=0$. We set $v_k:=u_{k+1}-u_k$. For all $k$, $u_k$ is a solution of~\eqref{ANG_Intro_EquationSemilineaire} in $\Omega$, thus
\begin{equation}\label{EquationV_k}
\left\{\begin{aligned}
&-\Delta v_k-c_k(x)v_k=0&&\text{in }\Omega,\\
&\D_\nu v_k=0&&\text{on }\D\Omega,
\end{aligned}\right.
\end{equation}
where
\begin{equation}
c_k(x):=\frac{f(u_{k+1}(x))-f(u_k(x))}{u_{k+1}(x)-u_k(x)}.
\end{equation}
Since $f$ is $C^{1,\alpha}$ and $u_{k+1}-u_k$ is bounded, $c_k(x)$ converges uniformly to $f'(u(x))$ when $k\to+\infty$.

Formally, we have $$\mathcal{F}\left(\frac{v_k}{\Vert v_k \Vert_{\substack{{L}^2}}}\right)\leq \Vert f'(u)-c_k\Vert_\infty\underset{k\to0}{\longrightarrow}0,$$ (with $\mathcal F$ from \eqref{DefLambda}) which contradicts the fact that $u$ is stable non-degenerate. However, the former calculation is not licit when $\Omega$ is unbounded. To make it rigorous, we use the cut-off function $\chi_R$ defined in~\eqref{DefCutOff}.

Multiplying \eqref{EquationV_k} by $v_k\chi_R^2$, integrating on $\Omega$, using the divergence theorem and the boundary condition in \eqref{EquationV_k} we find
\begin{align*}
\mathcal{F}\left(\frac{\chi_Rv_k}{\Vert \chi_Rv_k\Vert_{\substack{{L}^2}}}\right)
&=\frac{\int_{\substack{\Omega}}\chi_R^2v_k^2 (c_k-f'(u))}{\int_{\substack{\Omega}}\chi_R^2v_k^2}+\frac{\int_{\substack{\Omega}} \vert\nabla\chi_R\vert^2v_k^2}{\int_{\substack{\Omega}}\chi_R^2v_k^2}\\
&\leq\Vert c_k-f'(u)\Vert_{L^\infty\left(\Omega_R\right)} +4\ \alpha_{k,R},
\end{align*}
where
\begin{align*}
\mathcal{C}_k(R):=\int_{\Omega_R} v_k^2,\quad \alpha_{k,R}:= \frac{\mathcal{C}_k(2R)}{R^2\mathcal{C}_k(R)}.
\end{align*}
On the one hand, in the proof of \autoref{thmStableNonDegenerate}, we show that, for fixed $k\geq0$, $\liminf\limits_{R\to+\infty} \alpha_{k,R}\leq 0.$
On the other hand, since $\Vert c_k-f'(u)\Vert_{L^\infty\left(\Omega_R\right)}$ goes to $0$ when $k\to+\infty$, uniformly in $R$, we deduce that $\mathcal{F}$ can be made arbitrarily small. It implies $\lambda_1\leq0$.
The reverse inequality $\lambda_1\geq0$ can be proved with the same method, which achieves the proof.
\end{proof}

\bibliographystyle{abbrv}
\bibliography{library}
\end{document}